\title {Multiplicity free $\U(2)$-actions and triangles}
\author[Goertsches]{Oliver Goertsches}
\address[Goertsches]{Fachbereich Mathematik und Informatik der Philipps-Universit\"at Marburg}
\email{goertsch@mathematik.uni-marburg.de}
\author[Van Steirteghem]{Bart Van Steirteghem}
\address[Van Steirteghem]{Department Mathematik, FAU Erlangen-N\"urnberg}
\email{bartvs@math.fau.de}
\author[Wardenski]{Nikolas Wardenski}
\address[Wardenski]{Fachbereich Mathematik und Informatik der Philipps-Universit\"at Marburg}
\email{wardensn@staff.uni-marburg.de}
\newtheorem{theorem}{Theorem}[section]
\newtheorem{lemma}[theorem]{Lemma}
\newtheorem{proposition}[theorem]{Proposition}
\theoremstyle{definition}
\newtheorem{definition}[theorem]{Definition}
\newtheorem{remark}[theorem]{Remark}
\newtheorem{example}[theorem]{Example}
\numberwithin{equation}{section}
\newcommand{\C}{\mathbb C}
\newcommand{\Z}{\mathbb Z}
\newcommand{\N}{\mathbb N}
\newcommand{\Q}{\mathbb Q}
\newcommand{\R}{\mathbb R}
\newcommand{\PP}{\mathbb P}
\newcommand{\Pc}{\mathcal{P}}
\newcommand{\Qc}{\mathcal{Q}}
\newcommand{\Oc}{\mathcal{O}}
\newcommand{\Rp}{\R_{\ge 0}}
\newcommand{\GL}{\mathrm{GL}}
\newcommand{\PGL}{\mathrm{PGL}}
\newcommand{\SL}{\mathrm{SL}}
\newcommand{\SO}{\mathrm{SO}}
\newcommand{\SU}{\mathrm{SU}}
\newcommand{\U}{\mathrm{U}}
\newcommand{\fk}{\mathfrak{k}}
\newcommand{\ft}{\mathfrak{t}}
\newcommand{\fu}{\mathfrak{u}}
\newcommand{\sv}{\mathsf{v}}
\DeclareMathOperator{\rk}{rk}
\DeclareMathOperator{\Hom}{Hom}
\DeclareMathOperator{\conv}{conv}
\DeclareMathOperator{\cone}{cone}
\DeclareMathOperator{\Vect}{Vect}
\newcommand{\<}{\langle}
\renewcommand{\>}{\rangle}
\newcommand{\onto}{\twoheadrightarrow}
\newcommand{\into}{\hookrightarrow}
\newcommand{\wm}{\Gamma}
\newcommand{\dw}{\Lambda^+}
\newcommand{\wl}{\Lambda}
\newcommand{\wc}{\ft_+}
\newcommand{\TC}{T^{\C}}
\newcommand{\hfb}[3]{#1 \times_{#2}#3}
\newcommand{\mupv}{\mu_{\PP(V)}}
\newcommand{\ompv}{\om_{\PP(V)}}
\newcommand{\mop}{\Pc}
\newcommand{\eps}{\varepsilon}
\newcommand{\om}{\omega}
\newcommand{\inn}{\subset}
\newcommand{\loccit}{{\em loc.cit.}}
\begin{document}

\begin{abstract}
We classify the compact, connected multiplicity free Hamiltonian $\U(2)$-manifolds with trivial principal isotropy group whose momentum polytope is a triangle.  
\end{abstract}

\maketitle

\section{Introduction}
A fundamental invariant of a compact and connected Hamiltonian $K$-manifold $M$, where $K$ is a compact connected Lie group, is its momentum polytope $\mop(M)$. In \cite{knop-autoHam}, F.~Knop showed that if $M$ is \emph{multiplicity free} (cf.\ \cref{def:mfm} below) then $\mop(M)$ together with the principal isotropy group of the $K$-action uniquely determines $M$. This assertion had been conjectured by Th.~Delzant in the 1990s. Knop also gave  necessary and sufficient conditions for a polytope to be the momentum polytope of such a multiplicty free manifold $M$. These conditions involve a representation theoretic object, called  \emph{weight monoid}, associated to smooth affine spherical varieties, which constitute a certain class of complex algebraic varieties equipped with an action of a complex reductive group. 

In this paper, we apply Knop's classification result  
in the case where $K =\U(2)$ and determine the compact and connected multiplicity free Hamiltonian $\U(2)$-manifolds whose momentum polytope is a triangle and whose principal istotropy group is trivial.  The result is summarized in \cref{table_triangle_manifolds}. In contrast to Knop's work, which yields local descriptions of the multiplicty free manifold ``above'' open subsets of the momentum polytope, we have found explicit, global descriptions of the $\U(2)$-manifolds under consideration. Our hope is that they constitute useful ``experimental data'' to study the following natural question: Which geometric information about a multiplicity free manifold $M$ can ``directly'' be read off its momentum polytope $\mop(M)$?

In \cref{sec:background}, we review basic facts about Hamiltonian actions and recall, in \cref{thm:sjamaar_mp}, a special case of a local description, given by R.~Sjamaar in \cite{sjamaar-convexreex}, of the momentum polytope of a Hamiltonian manifold.  We also provide the necessary background to be able to state, in \cref{thm:knop_classif_mf}, a special case of Knop's aforementioned classification theorem which is adapted to our setting. \Cref{ex:u2} establishes notation we will use in \cref{sec:mfu2,sec:triangles,sec:difftypes}. 
The first purpose of \cref{sec:mfu2} is to further specialize Knop's \cref{thm:knop_classif_mf} to the case $K=\U(2)$: the classification of smooth affine spherical $(\SL(2,\C) \times \C^{\times})$-varieties from \cite{ppvs} yields \cref{prop:mom_polytopes}, which gives an elementary and explicit characterization of the momentum polytopes of compact and connected multiplictity free $\U(2)$-manifolds
with trivial principal isotropy group. A first application is \cref{prop:invcompl}, which extends the applicability of the  K\"ahlerizability criterion \cite[Theorem 8.8]{woodward-spherical}  due to C.~Woodward. We also extend \cite[Theorem 9.1]{woodward-spherical} and show in \cref{prop:atiyah}, using the extension criterion of S.~Tolman's \cite{tolman-exnonkaehlertorusactions}, that a multiplicity free $\U(2)$-manifold with trivial principal isotropy group carries a $\U(2)$-invariant compatible complex structure if and only if it carries a $T$-invariant compatible complex structure, where $T$ is a maximal torus of $\U(2)$.
We then apply \cref{prop:mom_polytopes} to find  in \cref{prop:triangles} the list of all triangles which occur as momentum polytopes of multiplicity free $\U(2)$-manifolds with trivial principal isotropy group. The rest of \cref{sec:triangles} is devoted to the proof of \cref{thm:triangles_realisation}: for each such triangle we explicitly and globally describe the corresponding compact and connected multiplicity free $\U(2)$-manifold. Finally, in \cref{thm:difftypes} of \cref{sec:difftypes}, we show  
that exactly four nonequivariant diffeomorphism types occur among these manifolds.  

We have tried to keep the exposition explicit and elementary in order to make our results and the employed techniques, which come from different areas of mathematics, accessible to as many readers as possible.  The techniques can directly be applied to the other compact Lie groups of rank $2$ and should yield analogous classifications and results.

\subsection*{Notation}
We use the convention that $0 \in \N$. From \cref{sec:mfu2} onward, $T$ will be the maximal torus of $\U(2)$ consisting of diagonal matrices and $\TC$ the subgroup of diagonal matrices in $\GL(2):=\GL(2,\C)$. We will use the notation from \cref{ex:u2} throughout the paper. 

Unless otherwise stated, $K$ will denote a compact connected Lie group and $G = K^{\C}$ its complexification, which is a complex connected reductive linear algebraic group.

\subsection*{Acknowledgment} The authors thank Guido Pezzini for helpful discussions, and in particular for suggesting the useful embedding in  \cref{prop:mfd_triangles_halfright_1}(\ref{prop:mfd_triangles_halfright_1_item1}). Van Steirteghem received support from the City University of New York PSC-CUNY Research Award Program and Wardenski from the German Academic Scholarship Foundation.

\section{Background}  \label{sec:background}
\subsection*{Multiplicity free manifolds} 
In this section we  review basic facts about Hamiltonian actions on symplectic manifolds, mostly following \cite{sjamaar-convexreex}, and then state results of Sjamaar (\cref{thm:sjamaar_mp}) and of Knop (\cref{thm:knop_classif_mf}) that will be essential in the proof of our main result. 

We begin with some basic notions and facts from the theory of compact Lie groups. Let $T$ be a maximal torus in the compact, connected Lie group  $K$. We will use $\fk$ and $\ft$ for the Lie algebras of $K$ and $T$, respectively. Furthermore $\fk^*$ and $\ft^*$ are the dual vector spaces, and we equip $\fk^*$ with the coadjoint action of $K$. We can and will view $\ft^*$ as a subspace of $\fk^*$ using the identification
\[\ft^* \cong (\fk^*)^T \inn \fk^*\]
with the subspace of $T$-fixed vectors in $\fk^*$.  We denote the weight lattice of $K$ by $\wl$, that is
\[\wl = \Hom_{\Z}(\ker(\exp|_\ft),\Z) \inn \ft^*,\] where $\exp: \fk \to K$ is the exponential map.  Note that 
\[\wl \to \Hom(T,\U(1)), \nu \mapsto [\exp(\xi) \mapsto \exp(2\pi\sqrt{-1} \<\nu,\xi\>)]\] is a bijection between $\wl$ and the character group of $T$, with inverse map
\[\Hom(T,\U(1)) \to \wl, \lambda \mapsto \frac{1}{2\pi \sqrt{-1}}\lambda_*,\]
where $\lambda_*$ is the derivative of $\lambda$ at the identity. We will use this bijection to identify $\wl$ with $ \Hom(T,\U(1))$.  In particular, if $V$ is a complex representation of $K$ and $v \in V$ is a weight vector of weight $\lambda \in \wl$, then we have (with abuse of notation)
\begin{align*}
\xi\cdot v &= 2\pi\sqrt{-1}\lambda(\xi) v \text{ for all }\xi \in \ft, \text{ and }\\
t\cdot v& = \lambda(t)v \text{ for all }t \in T.
\end{align*}

Next, we let $G:= K^{\C}$ be the complexification of $K$. Then $G$ is a complex connected reductive group of which $K$ is a maximal compact subgroup and of which the complexification $\TC$ of $T$ is a maximal torus.  
Recall that the weight lattice $\Hom(T^{\C},\C^{\times})$ of $G$ can be identified with $\wl$ using  the  restriction map 
\[\Hom_{\text{alg.gp.}}(T^{\C},\C^{\times}) \to \Hom_{\text{Lie gp.}}(T,U(1)), \lambda \mapsto \lambda|_T.\]
Fix a maximal unipotent subgroup $N$ of $G$ which is normalized by $\TC$  and let $\wc$ be the (closed) Weyl chamber in $\ft^*$ which is positive with respect to $N$. It is a fundamental domain for the coadjoint action of $K$ on $\fk^*$ and for the natural action of the Weyl group 
\[W:= N(T)/T\]
of $K$ on $\ft^*$.  Then 
\[\dw := \wl \cap \wc\]
is the monoid of dominant weights. Highest weight theory tells us that  the assignment
\[ V \mapsto \text{the weight of the $T$-action on $V^N$}\]
is a bijection between the set of isomorphism classes of  irreducible finite-dimensional complex representations of $K$ and $\dw$.  When $\lambda \in \dw$, we will write $V(\lambda)$ for the (up to isomorphism) unique  irreducible finite-dimensional complex representation of $K$ with highest weight $\lambda$. Furthermore, $K$ and $G$ have the same finite-dimensional complex representations: if $\dim_{\C} V
< \infty$ and $\rho: K \to \GL(V)$ is a homomorphism of Lie groups then there exists a unique homomorphism $\overline{\rho}: G \to \GL(V)$ of algebraic groups such that $\overline{\rho}|_K = \rho$.

\begin{example} \label{ex:u2}
To illustrate the objects we just recalled and to fix notation that we will use in what follows, we explicitly describe the objects in the case where $K$ is the unitary group  $\U(2)$ of rank $2$.  We choose the maximal torus 
\[T = \left\lbrace \begin{pmatrix}
t_1 & 0 \\ 0 & t_2
\end{pmatrix}: t_1,t_2 \in \C, |t_1| = |t_2| = 1\right\rbrace \inn \U(2). \]
The complexification of $\U(2)$ is $ \GL(2):= \GL(2,\C)$ and that of $T$ is 
\[\TC = \left\lbrace \begin{pmatrix}
t_1 & 0 \\ 0 & t_2
\end{pmatrix}: t_1,t_2 \in \C^{\times}\right\rbrace \inn \GL(2). \]
We will write $\eps_1, \eps_2$ for the basis of $\ft^*$ dual to the basis 
\[\xi_1:=\begin{pmatrix}
2\pi\sqrt{-1} & 0 \\ 0 & 0
\end{pmatrix},\ \xi_2:=\begin{pmatrix}
0 & 0 \\ 0 & 2\pi\sqrt{-1}
\end{pmatrix}\]
of $\ft$. Then the weight lattice is
\[\wl = \<\eps_1, \eps_2\>_{\Z}\]
and viewed as elements of $\Hom(T,\U(1))$ or of $\Hom(\TC,\C^{\times})$ the characters $\eps_1, \eps_2$ are defined by 
\begin{equation}
\eps_i  \begin{pmatrix}
t_1 & 0 \\ 0 & t_2
\end{pmatrix} = t_i \quad \text{ for } i \in \{1,2\}.
\end{equation}
For $N$ we choose the subgroup
\[\left\lbrace \begin{pmatrix}
1 & a \\ 0 & 1
\end{pmatrix} : a \in \C \right\rbrace
\]
of $\GL(2)$. The corresponding Weyl chamber is then
\[\ft_+ = \{\lambda \in \ft^* : \<\alpha^{\vee},\lambda\> \ge 0\},\]where 
\begin{equation} \label{eq:coroot}
\alpha^{\vee}:=\xi_1 - \xi_2
\end{equation}
is the coroot of the simple root 
\begin{equation} \label{eq:simpleroot}
\alpha:= \eps_1 - \eps_2 \in \wl \inn \ft^*
\end{equation}
of $\U(2)$ (and of $\GL(2)$). 

The Weyl group $W$ of $\U(2)$ (and of $\GL(2)$) is isomorphic to the symmetric group $S_2$ and the nontrivial element $s_{\alpha} \in W$ acts on $\ft^*$ by the reflection
\[s_{\alpha}(\lambda) = \lambda - \<\alpha^\vee,\lambda\>\alpha,\quad
\text{ where } \lambda \in \ft^*.\]

The monoid of dominant weights is 
\[\dw = \<\om_1, \om_2, -\om_2\>_{\N},\ \text{ where } \om_1 := \eps_1 \text{ and }\om_2:= \eps_1 + \eps_2.\]
Observe that $\om_1$ is the highest weight of the standard representation of $\U(2)$ (or of $\GL(2)$), which we will usually simply denote by $\C^2$. We will also use the notation $\C_{\det^k}$ for the one-dimensional representation $V(k\om_2)$, where $k\in \Z$: 
\[A \cdot z = \det(A)^kz\ \text{ for all } z \in \C_{\det^k}  \text{ and all }A \text{ in }\U(2) \text{ or in }\GL(2).\]
\end{example}

A \textbf{Hamiltonian $K$-manifold} is a triple $(M,\om,\mu)$, where $(M,\omega)$ is symplectic manifold equipped with a smooth $K$-action $K \times M \to M$ and a \textbf{momentum map} $\mu$, which means, by definition, a smooth map $\mu:M \to \fk^*$ that is $K$-equivariant with respect to the coadjoint action of $K$ on $\fk^*$ and satisfies
\[d\mu^\xi = \iota(\xi_M) \omega \text{ for all }\xi \in \fk.\]
Here $\xi_M$ is the vector field on $M$ defined by 
\[\xi_M(x) = \left.\frac{d}{dt}\right\vert_{t=0}\exp(t\xi)\cdot x \in T_x M \text{, where }x\in M,\] and $\mu^{\xi}:M \to \R$ is the function with $\mu^{\xi}(m) = \mu(m)(\xi)$.  
Since we have identified the Weyl chamber $\wc$ with a subset of $\fk^*$ we can define 
\begin{equation} \label{eq:def_mp}
\mop(M):= \mu(M) \cap \wc.
\end{equation}
In \cite[Theorem 2.1]{kirwan-convexity}, F.~Kirwan proved that $\mop(M)$ is the convex hull of finitely many points when $M$ is compact and connected. In that case we call $\mop(M)$ the \textbf{momentum polytope} of $M$.

\Cref{ex:mupv} describes an important source of Hamiltonian $K$-manifolds: projective spaces $\PP(V)$ associated to unitary representations $V$ of $K$.
\begin{example} \label{ex:mupv}
Let $V$ be a finite-dimensional unitary representation of $K$ with $K$-invariant Hermitian inner product $\<\cdot,\cdot\>$, where we adopt the convention that $\<\cdot,\cdot\>$ is complex-linear in the first entry. Following \cite[Ex.\ 2.1 and 2.2]{sjamaar-convexreex}, we describe well-known structures of Hamiltonian $K$-manifolds on $V$ and on the associated projective space $\PP(V)$, which is the space of complex lines in $V$. The map
\begin{equation} \label{eq:muv}
\mu_V: V \to \fk^*, \mu_V(v)(\xi) = \frac{\sqrt{-1}}{2} \<\xi v,v\>,
\end{equation}
where $\xi \in \fk$, is a momentum map for the $K$-invariant symplectic form $\om_V(\cdot,\cdot) = - \operatorname{Im}\<\cdot,\cdot\>$  on $V$. 
The Fubini-Study symplectic form $\ompv$ on $\PP(V)$ corresponding to $\<\cdot,\cdot\>$ is invariant under the natural $K$-action on $\PP(V)$ and we equip $\PP(V)$ with the momentum map
\begin{equation} \label{eq:mupv}
\mupv: \PP(V) \to \fk^* , \mupv([v])(\xi) = \frac{\sqrt{-1}}{2\pi} \frac{\<\xi v,v\>}{\Vert v \Vert^2},\end{equation}
where $\xi \in \fk$ and $[v]$ is the complex line through $v \in V \setminus\{0\}$. 

If $K$ is a torus and $v \in V$ is a weight vector with weight $\lambda$, then $\mu_V(v) = -\pi \|v\|^2v$ and $\mupv([v]) = -\lambda \in \fk^*$. This implies that
\begin{equation} \label{eq:weightscone}
\mu_V(V) = -\cone\{\lambda_1, \lambda_2, \ldots,\lambda_r\}
\end{equation}
and that 
 the momentum polytope of $(\PP(V), \mupv)$ is
\begin{equation}
\mop(\PP(V)) = \mupv(\PP(V)) = -\conv(\lambda_1,\lambda_2,\ldots,\lambda_r),
\end{equation}
where $\lambda_1,\lambda_2,\ldots,\lambda_r$ are the weights of $K$ in $V$. 
\end{example}

The following Theorem, which is due to Sjamaar and which was extracted from \cite{sjamaar-convexreex}  will be useful in \cref{sec:triangles}. In order to state it, we recall that the \emph{symplectic slice} of a Hamiltonian $K$-manifold $M$ in $m \in M$ is the symplectic vector space
\begin{equation}
N_m: = (T_m(K\cdot m))^{\perp}/(T_m(K\cdot m) \cap   (T_m(K\cdot m)^{\perp}),
\end{equation}
where  $(T_m (K\cdot m))^{\perp}$ is the symplectic annihilator of  $T_m (K\cdot m)$ in $T_m M$. The isotropy action of $K_m$ on $T_m M$ induces a natural symplectic representation of $K_m$ on $N_m$.

\begin{theorem}[{\cite{sjamaar-convexreex}}] \label{thm:sjamaar_mp}
Let $(M,\mu)$ be a compact connected Hamiltonian $K$-manifold. 
\begin{enumerate}[(a)]
\item If $m \in M$ such that $\mu(m)$ is a vertex of $\mop(M)$ lying in the interior of $\ft_+$, then $K_m = T$.  \label{item_pol_vertex_interior}%
\item Let $m \in M$ such that $\mu(m)$ lies in the interior of $\ft_+$ and $K_m = T$. Then 
\begin{equation} N_m=(T_m(K\cdot m))^{\perp}  \cong T_m M/T_m(K\cdot m)
\end{equation}
as $T$-modules, where  $N_m$ is the symplectic slice of $M$ in $m$. If $\Pi_m$ is the set of weights of the symplectic $T$-representation $N_m$,
then the cone with vertex $\mu(m)$ spanned by $\mop(M)$ is equal to $\mu(m) -\operatorname{cone}\Pi_m$. \label{item_pol_vertex_cone}
\end{enumerate}
\end{theorem}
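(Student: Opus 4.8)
The plan is to reduce both statements to the theory of Hamiltonian torus actions, exploiting that $\mu(m)$ is a regular element of $\fk^*$. First note that $K$-equivariance of $\mu$ gives $K_m \subseteq K_{\mu(m)}$, the coadjoint stabilizer of $\mu(m)$; since $\mu(m)$ lies in the interior of $\ft_+$ it is regular, so $K_{\mu(m)} = T$ and hence $K_m \subseteq T$ in both parts. The second ingredient is the local structure of $M$ near the orbit $K\cdot m$: I would invoke either the Guillemin--Sternberg symplectic cross-section theorem or the Marle--Guillemin--Sternberg normal form, both of which apply because $K_m \subseteq T$.

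For part (\ref{item_pol_vertex_interior}) I would use the symplectic cross-section $Y := \mu^{-1}(\operatorname{int}\ft_+)$, which is a $T$-invariant symplectic submanifold of $M$ for which $\mu|_Y \colon Y \to \operatorname{int}\ft_+ \subset \ft^*$ is a momentum map for the residual $T$-action. Since $\mu(Y) = \mu(M)\cap\operatorname{int}\ft_+$ agrees with $\mop(M)$ near the interior point $\mu(m)$, the point $\mu(m)$ is a vertex of the image of the torus momentum map $\mu|_Y$. By the local normal form for Hamiltonian torus actions, the image of $\mu|_Y$ near $m$ is $\mu(m) + \operatorname{Ann}(\mathfrak{t}_m) + \cone(\Pi)$, where $\mathfrak{t}_m = \operatorname{Lie}(T_m)$ and $\Pi$ is the set of weights of the slice representation; for $\mu(m)$ to be a vertex the linear subspace $\operatorname{Ann}(\mathfrak{t}_m)$ must be trivial, forcing $\mathfrak{t}_m = \ft$ and hence $T_m = T$. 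Thus $T \subseteq K_m \subseteq T$, i.e.\ $K_m = T$.

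For part (\ref{item_pol_vertex_cone}), the identity $\om(\xi_M(m),\eta_M(m)) = \langle \mu(m),[\xi,\eta]\rangle$ shows that $\om|_{T_m(K\cdot m)}$ has the same kernel as the Kostant--Kirillov--Souriau form of the coadjoint orbit through $\mu(m)$; regularity of $\mu(m)$ makes this form nondegenerate, so $K\cdot m$ is a symplectic submanifold at $m$. Consequently $T_m(K\cdot m)\cap (T_m(K\cdot m))^\perp = 0$ and $T_m M = T_m(K\cdot m)\oplus (T_m(K\cdot m))^\perp$ as symplectic $T$-modules, which yields the asserted isomorphisms $N_m = (T_m(K\cdot m))^\perp \cong T_m M/T_m(K\cdot m)$. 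For the cone, I would write the Marle--Guillemin--Sternberg model $K\times_T(\mathfrak{q}^*\oplus N_m)$ with $\fk = \ft\oplus\mathfrak{q}$ a $T$-invariant splitting; there the momentum map is $[k,\eta,v]\mapsto \operatorname{Ad}^*(k)\bigl(\mu(m)+\eta+\Phi_{N_m}(v)\bigr)$ with $\eta \in \mathfrak{q}^* = \operatorname{Ann}(\ft)$ and $\Phi_{N_m}$ the quadratic momentum map of the slice. The derivative at $\mu(m)$ of the map sending a coadjoint orbit near $K\cdot\mu(m)$ to its unique representative in $\ft_+$ is, by regularity, the projection $\fk^* = \ft^*\oplus\mathfrak{q}^*\to\ft^*$ killing the orbit directions $\operatorname{ad}^*(\mathfrak{q})\mu(m) = \operatorname{Ann}(\ft)$. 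Intersecting the image of the model with $\ft_+$ therefore yields, locally, $\mu(m)+\Phi_{N_m}(N_m)$; applying the computation of \cref{ex:mupv} to the $T$-representation $N_m$ gives $\Phi_{N_m}(N_m) = -\cone(\Pi_m)$, so the cone with vertex $\mu(m)$ spanned by $\mop(M)$ equals $\mu(m)-\cone\Pi_m$.

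The main obstacle is the chamber-reduction analysis in part (\ref{item_pol_vertex_cone}): one must show that intersecting the $K$-orbits in the normal form with $\ft_+$ produces exactly the cone generated by the slice weights, with no contribution from the transverse directions $\mathfrak{q}^*$ and no interference from higher-order terms in the quadratic map $\Phi_{N_m}$. I expect this to require the cross-section picture to identify $\mu(Y)$ near $\mu(m)$ cleanly with $\mu(m)+\Phi_{N_m}(N_m)$, together with a local-to-global argument (valid because $\mu(m)$ is a vertex and the resulting cone is pointed) to pass from the local image to the full cone spanned by $\mop(M)$.
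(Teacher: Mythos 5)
Your reduction $K_m\subseteq K_{\mu(m)}=T$, your cross-section argument for part~(\ref{item_pol_vertex_interior}), and your proof that $K\cdot m$ is symplectic at $m$ (hence $N_m=(T_m(K\cdot m))^{\perp}\cong T_mM/T_m(K\cdot m)$) are all correct. Note that the paper itself proves nothing here: it cites parts 1 and 2 of Sjamaar's Theorem~6.7 together with \eqref{eq:weightscone}, so any self-contained argument is necessarily a different route from the paper's, and yours follows the lines of Sjamaar's own arguments (cross-section plus local normal form).

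The cone statement in part~(\ref{item_pol_vertex_cone}), however, has a genuine gap, and it is exactly the obstacle you flag in your last paragraph; its source is that your local model is wrong. The Marle--Guillemin--Sternberg model is $K\times_{K_m}\bigl((\fk_{\mu(m)}/\fk_m)^*\oplus N_m\bigr)$, and in the situation of part~(\ref{item_pol_vertex_cone}) we have $\fk_{\mu(m)}=\ft=\fk_m$ (regularity of $\mu(m)$ plus $K_m=T$), so the middle factor vanishes and the model is simply $\hfb{K}{T}{N_m}$: there is no $\mathfrak{q}^*$ factor. Your model $K\times_T(\mathfrak{q}^*\oplus N_m)$ is dimensionally impossible, since it has dimension $2(\dim K-\dim T)+\dim N_m$ while $\dim M=(\dim K-\dim T)+\dim N_m$ by the very splitting $T_mM=T_m(K\cdot m)\oplus N_m$ you established. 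The spurious $\operatorname{Ann}(\ft)$ directions are precisely what makes your chamber reduction unfinishable: the map sending a point of $\fk^*$ near $\mu(m)$ to the representative of its coadjoint orbit in $\ft_+$ is insensitive to $\operatorname{Ann}(\ft)$ only to first order. At second order it moves points; for $K=\SU(2)$, identifying $\fk^*\cong\R^3$ so that coadjoint orbits are spheres and $\ft_+$ is a ray, the orbit of $\mu(m)+\eta$ with $\eta\in\operatorname{Ann}(\ft)$ meets the ray at distance $|\mu(m)|+|\eta|^2/(2|\mu(m)|)+\cdots$ from the origin, i.e.\ displaced from $\mu(m)$ in the direction of $+\mu(m)$, which need not lie in $-\cone\Pi_m$. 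So if your model were a local model of $M$, the invariant image would in general be strictly larger than $\mu(m)-\cone\Pi_m$, and no first-order statement about the derivative of the sweep map can rule this out. With the correct model the whole difficulty disappears: for $v$ small, $\mu(m)+\Phi_{N_m}(v)$ already lies in $\ft^*$ near the interior point $\mu(m)$ of $\ft_+$, hence is itself the unique point where its coadjoint orbit meets $\ft_+$; thus the invariant image of a neighborhood of the zero section is a neighborhood of $\mu(m)$ in $\mu(m)+\Phi_{N_m}(N_m)=\mu(m)-\cone\Pi_m$ by \eqref{eq:weightscone}, and the global statement follows from convexity of $\mop(M)$ alone, since for any $x\in\mop(M)$ the segment $[\mu(m),x]$ lies in $\mop(M)$ and so its germ at $\mu(m)$ lies in the local description. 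In particular your final parenthesis is also off: the local-to-global step needs no pointedness or vertex hypothesis, and none is available, because part~(\ref{item_pol_vertex_cone}) applies to non-vertices, where the cone is not pointed (\cref{rem_thm_sjam_mp}(\ref{rem_thm_sjam_mp_a})).
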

\begin{proof}
Assertion (\ref{item_pol_vertex_interior}) is contained in part 2.\ of \cite[Theorem 6.7]{sjamaar-convexreex}.
Assertion (\ref{item_pol_vertex_cone})  follows from part 1. of \loccit\ and from \eqref{eq:weightscone} above; see also the paragraph in \cite{sjamaar-convexreex} containing Equation (6.9). To apply   \eqref{eq:weightscone} to the symplectic $T$-representation $N_m$  we recall that any symplectic $T$-representation $(V,\om_V)$  can be made into a unitary representation by choosing a $T$-invariant complex structure on $V$ that is compatible with the symplectic form $\omega_V$ and  that the weights of the representation are independent of this choice.
\end{proof}

\begin{remark} \label{rem_thm_sjam_mp}
In both parts of this remark, the point $m \in M$ is as in part (\ref{item_pol_vertex_cone}) of \cref{thm:sjamaar_mp}. 
\begin{enumerate}[(a)]
\item \label{rem_thm_sjam_mp_a}The cone with vertex $\mu(m)$ spanned by $\mop(M)$ is not pointed when $\mu(m)$ is not a vertex of $\mop(M)$ (we recall that a cone is called \emph{pointed} when it does not contain any line).
\item Later in this paper we will use that there exists a $K$-invariant diffeomorphism $\varphi$ from the homogeneous fiber bundle $\hfb{K}{T}{N_m}$ onto a $K$-invariant neighborhood of $K\cdot m$ in $M$ such that $\varphi([e,0]) = m$ (we recall the construction of  $\hfb{K}{T}{N_m}$ below in \cref{prop:hfb_diff}). This is an application of the slice theorem (see, e.g., \cite[Theorem 4.10]{kawakubo}). Actually, the proof of  \cref{thm:sjamaar_mp}(\ref{item_pol_vertex_cone}) uses the  \emph{symplectic} slice theorem of Marle~\cite{marle-construction} and Guillemin-Sternberg~\cite{guill&stern-sympphys} (see, e.g., \cite[Theorem 6.3]{sjamaar-convexreex} for a statement of this theorem).  \label{rem_thm_sjam_mp_slice}
\end{enumerate}
\end{remark}

We will also make use of the following well-known fact. For a proof, see, e.g., \cite[Theorems 1.2.1 and 1.2.2]{guill&sjam-crm}.
\begin{proposition} \label{prop:convexhullweylorbit}
Let $(M,\om,\mu)$ be a compact connected Hamiltonian $K$-manifold with momentum polytope $\mop(M)$ and let $r: \fk^* \to \ft^*$ be the dual map to the inclusion $\ft \to \fk$. Then $(M,\om, r \circ \mu)$ is a Hamiltonian $T$-manifold whose momentum polytope $\mop_T(M): = r (\mu (M))$ satisfies the equality 
\begin{equation}
\mop_T(M) = \conv \left(\bigcup_{w \in W} w \cdot \mop(M) \right).
\end{equation} 
\end{proposition}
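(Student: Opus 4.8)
The plan is to first check that $r\circ\mu$ is genuinely a momentum map for the restricted $T$-action, and then to establish the polytope identity by a double inclusion, drawing the two directions from two different convexity theorems. For the momentum-map verification I would check the two defining properties directly: for $\xi\in\ft\inn\fk$ one has $(r\circ\mu)^\xi=\mu^\xi$ by the very definition of $r$ as the dual of the inclusion $\ft\to\fk$, and the fundamental vector field $\xi_M$ is the same whether $\xi$ is viewed as generating the $T$- or the $K$-action, so $d(r\circ\mu)^\xi=\iota(\xi_M)\om$ is inherited from the corresponding identity for $\mu$. Equivariance reduces to $T$-invariance, since the coadjoint $T$-action on $\ft^*$ is trivial, and this follows from $r\circ\operatorname{Ad}^*(t)=r$ for $t\in T$, which holds because $\operatorname{Ad}(t^{-1})$ fixes $\ft$ pointwise. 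Having established this, the Atiyah--Guillemin--Sternberg convexity theorem applies and shows in particular that $\mop_T(M)=r(\mu(M))$ is a convex polytope.

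For the inclusion $\mop_T(M)\supseteq\conv\bigl(\bigcup_{w\in W}w\cdot\mop(M)\bigr)$ I would proceed as follows. Under the identification $\ft^*\cong(\fk^*)^T$ the map $r$ restricts to the identity on $\ft^*$, so $\mop(M)=\mu(M)\cap\wc\inn r(\mu(M))=\mop_T(M)$. Next, writing $w=nT$ with $n\in N(T)$, a short computation on $\ft$ gives the compatibility $r\circ\operatorname{Ad}^*(n)=w\circ r$ on $\fk^*$; since $\mu(M)$ is a union of coadjoint $K$-orbits by equivariance of $\mu$, and hence $\operatorname{Ad}^*(n)$-invariant, the image $\mop_T(M)=r(\mu(M))$ is $W$-invariant. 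Being convex, $W$-invariant, and containing $\mop(M)$, it therefore contains $w\cdot\mop(M)$ for every $w\in W$, and hence the convex hull of their union.

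For the reverse inclusion the key input is Kostant's linear convexity theorem, which asserts that $r(\operatorname{Ad}^*(K)\lambda)=\conv(W\cdot\lambda)$ for every $\lambda\in\wc$. Given $\phi\in\mu(M)$, let $\lambda$ be the unique point of the coadjoint orbit $\operatorname{Ad}^*(K)\phi$ lying in $\wc$; since $\mu(M)$ is $K$-invariant we have $\lambda\in\mu(M)\cap\wc=\mop(M)$. Then $r(\phi)\in r(\operatorname{Ad}^*(K)\lambda)=\conv(W\cdot\lambda)$, and because $\lambda\in\mop(M)$ we have $W\cdot\lambda\inn\bigcup_{w}w\cdot\mop(M)$, so $r(\phi)\in\conv\bigl(\bigcup_{w}w\cdot\mop(M)\bigr)$. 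Ranging over all $\phi\in\mu(M)$ yields $\mop_T(M)\inn\conv\bigl(\bigcup_{w}w\cdot\mop(M)\bigr)$, which together with the previous paragraph gives the claimed equality.

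The momentum-map check and the two equivariance computations are routine bookkeeping; the genuine content is carried entirely by the two convexity theorems, which I would invoke as cited black boxes rather than reprove. The only points that require a little care are the identity $r\circ\operatorname{Ad}^*(n)=w\circ r$ and the fact that $r$ restricts to the identity on $\ft^*$ under the chosen identification $\ft^*\cong(\fk^*)^T$, since the $W$-invariance of $\mop_T(M)$ and the inclusion $\mop(M)\inn\mop_T(M)$ both rest on precisely how $\ft^*$ is embedded in $\fk^*$.
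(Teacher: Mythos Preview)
Your argument is correct. The paper does not actually supply its own proof of this proposition; it simply records it as a well-known fact and refers the reader to \cite[Theorems 1.2.1 and 1.2.2]{guill&sjam-crm}. Your route---verifying that $r\circ\mu$ is a $T$-momentum map, then obtaining the two inclusions from Atiyah--Guillemin--Sternberg convexity (for $W$-invariance and convexity of $\mop_T(M)$) and Kostant's linear convexity theorem (for the pointwise containment $r(\phi)\in\conv(W\cdot\lambda)$)---is the standard one and is essentially what the cited reference does. The care you take with the identification $\ft^*\cong(\fk^*)^T$ and the compatibility $r\circ\operatorname{Ad}^*(n)=w\circ r$ is exactly what is needed to make the $W$-invariance step rigorous.
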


\begin{definition} \label{def:mfm}
A \textbf{multiplicity free $K$-manifold} is a \emph{compact} and \emph{connected} Hamiltonian $K$-manifold $M$ such that 
\begin{equation} \label{eq:defmfm}
\mu^{-1}(a)/K_a \text{ is a point for every }a \in \mu(M).
\end{equation}
\end{definition} 

\begin{remark} \label{rem:mf}
\begin{enumerate}[(a)]
\item We have included connectedness and compactness in the definition of a multiplicity free $K$-manifold to avoid having to frequently repeat the associated adjectives in this paper. The (more general) notion of multiplicity free Hamiltonian manifold was introduced in \cite{mifo-liouville} and \cite{guill&stern-mf} as a Hamiltonian $K$-manifold  $M$  of which the Poisson algebra of $K$-invariant smooth functions $M \to \R$ is an abelian Lie algebra. Equivalent conditions on $M$ are given in \cite[Theorem 3]{huckleb&wurzb}. As shown in \cite[Proposition A.1]{woodward-classif}, for a compact, connected Hamiltonian $K$-manifold $M$ this original definition is
equivalent to condition \eqref{eq:defmfm}.  
\item \label{rem:mf_item_b} Let $(M,\om,\mu)$ be a compact connected Hamiltonian $K$-manifold. As observed in \cite{knop-autoHam}, just after Definition 2.1, $M$ is multiplicity free if and only if $$M/K \to \mop(M): K \cdot m \mapsto \mu(K\cdot m) \cap \wc$$ is a homeomorphism. Furthermore, if the principal isotropy group of the $K$-action on $M$ is discrete, then $M$ is multiplicity free if and only if 
\begin{equation}  \label{eq:dim_mf}
\dim(M) = \dim(K) + \rk(K),
\end{equation}
see \cite[Proposition A.1]{woodward-classif}.
\end{enumerate}
\end{remark}

In order to state  Knop's classification theorem for multiplicity free manifolds we introduce some additional notation and recall some more well-known facts. A smooth affine complex $G$-variety $X$ is called \textbf{spherical} if its ring of regular functions $\C[X]$ is multiplicity free as a $G$-module, that is 
\[\dim \Hom^G(V(\lambda),\C[X]) \leq 1 \text{ for all }\lambda \in \dw.\] The \textbf{weight monoid} $\wm(X)$ of $X$ is the set of highest weights of $\C[X]$, that is
\[\wm(X) :=\left \{\lambda \in \dw :  \Hom^G(V(\lambda),\C[X])\neq\{0\}\right\}. \] 
As proved by Losev in \cite[Theorem 1.3]{losev-knopconj}, a smooth affine spherical $G$-variety $X$ is uniquely determined by $\wm(X)$, up to $G$-equivariant isomorphism. 
If $a \in \ft_+ \inn \fk^*$ then the complexification $K_a^{\C}$ of the stabilizer $K_a$ of $a$ is a complex connected reductive subgroup of $G$. Since $K_a$ contains $T$ its weight lattice is still $\wl$. The Weyl chamber of $K_a$ and $K_a^{\C}$ corresponding to the maximal unipotent subgroup $N \cap K_a^{\C}$ of $K_a^{\C}$ is $\Rp(\ft_+ - a) \subset \ft^*$.  

\begin{example} \label{ex:u2kac}
We take $K=\U(2)$ and use the notation of \cref{ex:u2}. If $a \in \wc$ then
\[
K_a^{\C} = \begin{cases} \GL(2) & \text{if $\<\alpha^{\vee}, a\> = 0$;}\\
T^{\C} &\text{if $\<\alpha^{\vee},a\>>0$}
\end{cases}
\]
and the corresponding positive Weyl chamber of $K_a^{\C}$ is
\[
\Rp(\wc-a)= \begin{cases} \wc & \text{if $\<\alpha^{\vee}, a\> = 0$;}\\
\ft^* &\text{if $\<\alpha^{\vee},a\>>0$}
\end{cases}
\]
\end{example}

We can now specialize Knop's Theorems 10.2 and 11.2 from \cite{knop-autoHam} to the case of compact connected multiplicity free Hamiltonian manifolds with trivial principal isotropy group. 
\begin{theorem}[{Knop}] \label{thm:knop_classif_mf}
\begin{enumerate}[(a)]
\item \label{thm:knop_classif_mf_u_item} Suppose $(M,\om_M,\mu_M)$ and $(N,\om_N,\mu_N)$ are multiplicity free $K$-manifolds with trivial principal isotropy group. If $\mop(M) = \mop(N)$, then there exists a $K$-equivariant symplectomorphism $\varphi: M \to N$ such that $\mu_N \circ \varphi = \mu_M$. 
\item \label{thm:knop_classif_mf_e_item} Let $\Qc$ be a convex polytope in $\wc$. There exists a multiplicity free $K$-manifold $M$ with trivial principal isotropy group such that $\mop(M) = \Qc$ if and only if for every vertex $a$ of $\Qc$ there exists a smooth affine spherical $(K_a)^{\C}$-variety $X_a$ such that
\begin{align}
\label{eq:knop_classif_mf_tpi_item} &\wm(X_a)\text{ generates the weight lattice }\wl \text{ as a group, and}\\
\label{eq:knop_classif_mf_cone_item} &\Qc - a\text{ and }\wm(X_a)\text{ generate the same convex cone in }\ft^*.
\end{align}
\end{enumerate}
\end{theorem}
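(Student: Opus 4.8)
The plan is to deduce both assertions from Knop's general Theorems~10.2 and~11.2 in \cite{knop-autoHam} by specializing the combinatorial invariant occurring there to the case of trivial principal isotropy. Recall that to a multiplicity free $K$-manifold $M$ Knop attaches not merely its momentum polytope $\mop(M)$ but the pair consisting of $\mop(M)$ together with a sublattice $\wl_M \subseteq \wl$, and that, over a neighborhood of each vertex $a$ of $\mop(M)$, $M$ is modelled on a smooth affine spherical $(K_a)^{\C}$-variety $X_a$ whose weight monoid $\wm(X_a)$ encodes the germ of $\mop(M)$ at $a$. Knop's Theorem~10.2 asserts that the pair $(\mop(M),\wl_M)$ determines $M$ up to the $K$-equivariant, momentum-map-preserving symplectomorphism we seek, and Theorem~11.2 characterizes the realizable pairs in terms of the local varieties $X_a$.

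The crux of the argument, and the step I expect to be the main obstacle, is the translation
\[
\text{the principal isotropy group of }M\text{ is trivial} \iff \wl_M = \wl .
\]
To establish it I would use Knop's description of $\wl_M$ as the lattice generated by $\wm(X_a)$, which is independent of the vertex $a$, together with the fact that the principal isotropy group of a multiplicity free manifold can be identified, up to conjugacy, with the common kernel in $T$ of the characters in $\wl_M$. Since $\wl$ separates the points of $T$, this kernel is trivial exactly when $\wl_M=\wl$, whereas any proper sublattice leaves a nontrivial (finite or positive-dimensional) kernel. This identifies ``trivial principal isotropy group'' with ``$\wm(X_a)$ generates $\wl$'', that is, with condition~\eqref{eq:knop_classif_mf_tpi_item}.

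With this translation in hand, part~(\ref{thm:knop_classif_mf_u_item}) is immediate: if $M$ and $N$ both have trivial principal isotropy and $\mop(M)=\mop(N)$, then $\wl_M=\wl=\wl_N$, so the two pairs coincide and Knop's Theorem~10.2 supplies the desired $\varphi$ with $\mu_N\circ\varphi=\mu_M$. For part~(\ref{thm:knop_classif_mf_e_item}) I would feed the fixed lattice $\wl_M=\wl$ into Knop's Theorem~11.2, whose realizability conditions are local at each vertex $a$ of $\Qc$: one needs a smooth affine spherical $(K_a)^{\C}$-variety $X_a$ whose weight monoid generates the prescribed lattice and whose associated cone matches the germ of $\Qc$ at $a$. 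The first requirement is \eqref{eq:knop_classif_mf_tpi_item} by the translation above; the second is the statement that $\wm(X_a)$ and $\Qc-a$ generate the same convex cone in $\ft^*$, which is \eqref{eq:knop_classif_mf_cone_item}, using that the dominant cone of $(K_a)^{\C}$ is $\Rp(\wc-a)$, as recalled just before \cref{ex:u2kac}.

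Two final points I would verify to make the specialization airtight. First, that once the lattice is pinned to $\wl$ no further compatibility between the local models is silently imposed: here Knop's gluing is invoked, and I would check that, because all the $X_a$ share the single lattice $\wl$ and $\mop(M)$ is connected, the local data automatically assemble into a global manifold. Second, that each $X_a$ is itself determined by its weight monoid, hence by the cone-and-lattice conditions \eqref{eq:knop_classif_mf_tpi_item}--\eqref{eq:knop_classif_mf_cone_item}; this is guaranteed by Losev's uniqueness theorem \cite{losev-knopconj} and is exactly what makes the existence of part~(\ref{thm:knop_classif_mf_e_item}) consistent with the uniqueness of part~(\ref{thm:knop_classif_mf_u_item}).
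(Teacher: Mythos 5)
Your proposal is correct and takes essentially the same route as the paper: the paper gives no separate proof but states the theorem as a direct specialization of Knop's Theorems 10.2 and 11.2, observing (in \cref{rem_after_knop_classif}) that Knop encodes the principal isotropy group as a sublattice of $\wl$ and that triviality of that group corresponds exactly to condition \eqref{eq:knop_classif_mf_tpi_item}. Your translation ``trivial principal isotropy $\iff \wl_M = \wl$'' via the common kernel in $T$ of the characters in $\wl_M$ is precisely the specialization the paper has in mind.
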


\begin{remark} \label{rem_after_knop_classif}
\begin{enumerate}[(a)]
\item The fact that the principal isotropy group of the $K$-action on $M$ is trivial is encoded in condition 
\eqref{eq:knop_classif_mf_tpi_item}
of \cref{thm:knop_classif_mf}. Knop's classification result \cite[Theorem 11.2]{knop-autoHam} makes no restrictions on the principal isotropy group, which is encoded as a sublattice of $\wl$.  
\item Part (\ref{thm:knop_classif_mf_u_item}) of \cref{thm:knop_classif_mf} is a special case of a conjecture due to Th.~Delzant. He proved his conjecture when $K$ is a torus in \cite{delzant-abel} and when $\rk(K) =2$ in \cite{delzant-rank2}. Knop proved it in general in \cite[Theorem 10.2]{knop-autoHam}.
\item \label{rem_after_knop_classif_item_pointed} Thanks to \cite{charwm}, the criterion in part (\ref{thm:knop_classif_mf_e_item}) of \cref{thm:knop_classif_mf} can be checked combinatorially (or algorithmically), i.e.\ without having to actually produce the spherical varieties $X_a$. On the other hand, in \cref{sec:mfu2} below we will distill from \cite{ppvs} all smooth affine spherical $\GL(2)$-varieties $X$ such that $\wm(X)$ generates $\wl$ as a group and the convex cone generated by $\wm(X)$ is pointed. 
\item \label{rem_after_knop_classif_item_locmod}Referring to \cite[Section 2]{knop-autoHam} for details, we briefly sketch how the $(K_a)^{\C}$-variety $X_a$ yields a ``local model'' of the manifold $M$  as in \cref{thm:knop_classif_mf}(\ref{thm:knop_classif_mf_e_item}). 
One can define a structure of Hamiltonian $K$-manifold on the homogeneous fiber bundle $\hfb{K}{K_a}{X_a}$ such that  a $K$-stable open subset of $\hfb{K}{K_a}{X_a}$ is isomorphic (as a Hamiltonian $K$-manifold) to a neighborhood of the $K$-orbit $\mu^{-1}(K\cdot a)$  in $M$.
\end{enumerate}
\end{remark}

\subsection*{Homogeneous fiber bundles} 
To explicitly describe multiplicity free $\U(2)$-manifolds in \cref{sec:triangles}, we will make use of \emph{homogeneous fiber bundles}, which are also known as \emph{associated bundles} or \emph{twisted products}. We recall their basic properties in the category of differentiable manifolds,  then in that of algebraic varieties, and finally state a comparison result that we will need later.

If $G$ is group, $H$ is a subgroup of $G$ and $F$ is a set on which $H$ acts, then we denote by $\hfb{G}{H}{F}$ the quotient set of $G\times F$ for the following action of $H$
\begin{equation} \label{eq:actionH}
h\cdot (g,f) = (gh^{-1},h\cdot f)\quad \text{for }g\in G,h\in H, f\in F.
\end{equation} As the left action of $G$ on $G\times F$, $g\cdot (g',f) = (gg',f)$ commutes with this action of $H$, we obtain a $G$-action on $\hfb{G}{H}{F}$. We will use $\pi$ for the $G$-equivariant quotient map
\[\pi: G\times F \to \hfb{G}{H}{F}, (g,f) \mapsto [g,f]\]
and $p$ for the (well-defined) $G$-equivariant map
\[p: \hfb{G}{H}{F} \to G/H, [g,f] \mapsto gH.\]

We begin with standard facts about the ``differentiable'' version of $\hfb{G}{H}{F}$ and sketch a proof for the sake of completeness. 
\begin{proposition} \label{prop:hfb_diff}
Let $G$ be a compact connected Lie group, $H$ a closed subgroup and $F$ a manifold equipped with a smooth action of $H$. Then the following hold:
\begin{enumerate}[(a)]
\item \label{prop:hfb_diff_item_structure} $\hfb{G}{H}{F}$ admits a unique structure as a manifold such that 
\begin{enumerate}[(i)]
\item $\pi: G\times F \to \hfb{G}{H}{F}$ is a smooth map; and
\item for an arbitrary manifold $N$ a map $h: \hfb{G}{H}{F} \to N$ is smooth if and only if $h \circ \pi$ is smooth. 
\end{enumerate} When $\hfb{G}{H}{F}$ is equipped with this structure, the map $p: \hfb{G}{H}{F} \to G/H$ and the action map $G \times (\hfb{G}{H}{F}) \to \hfb{G}{H}{F}$ are smooth;
\item \label{prop:hfb_diff_item_diffeo} If $f:M \to G/H$ is a smooth $G$-equivariant map,  where $M$ is a manifold equipped with a smooth action of $G$, and $A = f^{-1}(eH)$, then $A$ is a smooth $H$-invariant submanifold of $M$ and the map 
\[\hfb{G}{H}{A} \to  M, [g,a] \mapsto g \cdot a\]
is a $G$-equivariant diffeomorphism, if $\hfb{G}{H}{A}$ carries the manifold structure of part (\ref{prop:hfb_diff_item_structure}).
\end{enumerate}
\end{proposition}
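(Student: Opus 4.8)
The plan is to prove the two parts of \cref{prop:hfb_diff} in turn, building the manifold structure in part (\ref{prop:hfb_diff_item_structure}) and then using it to establish the diffeomorphism in part (\ref{prop:hfb_diff_item_diffeo}). For part (\ref{prop:hfb_diff_item_structure}), the key observation is that the $H$-action on $G \times F$ given by \eqref{eq:actionH} is free and proper: it is free because $h \cdot (g,f) = (g,f)$ forces $gh^{-1} = g$, hence $h = e$; and it is proper because $G$ is compact (so $H$ is compact), and compact group actions are automatically proper. First I would invoke the quotient-manifold theorem for free proper actions of Lie groups (see, e.g., the standard references on Lie group actions) to conclude that $\hfb{G}{H}{F} = (G \times F)/H$ carries a unique smooth structure making $\pi$ a smooth submersion. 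The universal property in (i)--(ii) is then the standard characterization of smooth maps out of a quotient by a submersion: since $\pi$ is a surjective submersion, a map $h$ out of the quotient is smooth if and only if $h \circ \pi$ is smooth. The smoothness of $p$ and of the $G$-action then follows because, after precomposing with the submersion $\pi$, both become manifestly smooth maps $G \times F \to G/H$ and $G \times (G \times F) \to \hfb{G}{H}{F}$ respectively, so the universal property applies.

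For part (\ref{prop:hfb_diff_item_diffeo}), I would first check that $A = f^{-1}(eH)$ is an embedded submanifold. Since $f$ is $G$-equivariant and the $G$-action on $G/H$ is transitive, $f$ is a submersion onto $G/H$ (equivariance forces the differential to be surjective at every point, as the image contains the tangent to the full orbit $G/H$), so $A$ is a regular preimage and hence an embedded submanifold. That $A$ is $H$-invariant is immediate: for $a \in A$ and $h \in H$ we have $f(h \cdot a) = h \cdot f(a) = h \cdot eH = eH$. I would then define the candidate map $\Phi : \hfb{G}{H}{A} \to M$, $[g,a] \mapsto g \cdot a$, checking first that it is well-defined (it respects the relation \eqref{eq:actionH} because $gh^{-1} \cdot (h \cdot a) = g \cdot a$) and $G$-equivariant. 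Its smoothness follows from the universal property of part (\ref{prop:hfb_diff_item_structure}), since the composite $\Phi \circ \pi : G \times A \to M$ is the restriction of the smooth action map.

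It remains to show $\Phi$ is a diffeomorphism, and this is where the main work lies. For bijectivity I would argue directly: given $m \in M$, the point $f(m) \in G/H$ can be written $gH$ for some $g \in G$, and then $g^{-1} \cdot m$ lies in $A$ since $f(g^{-1} \cdot m) = g^{-1} \cdot f(m) = g^{-1} \cdot gH = eH$; this gives surjectivity, and injectivity follows by unwinding the relation defining the twisted product together with the fact that $g \cdot a = g' \cdot a'$ forces $gH = g'H$ (apply $f$) and then $g^{-1}g' \in H$ relates $a$ and $a'$ exactly via \eqref{eq:actionH}. The main obstacle is upgrading this smooth bijection to a diffeomorphism, i.e.\ producing a smooth inverse. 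The cleanest route I expect is a dimension count combined with a check that $\Phi$ is an immersion: both spaces fiber over $G/H$ (via $p$ and $f$ respectively) with fiber $A$, so $\dim \hfb{G}{H}{A} = \dim(G/H) + \dim A = \dim M$ by the submersion property of $f$, and $\Phi$ intertwines these fibrations. Verifying that $\Phi$ is an immersion can be done fiberwise over $G/H$: on each fiber $\Phi$ restricts to the diffeomorphism $a \mapsto g \cdot a$ of $A$ onto the fiber $f^{-1}(gH)$, and transversality of these fiber differentials to the base directions gives injectivity of $d\Phi$. Since $\Phi$ is then an injective immersion between manifolds of equal dimension, it is a local diffeomorphism, and being also a continuous bijection it is a diffeomorphism. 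Alternatively, one can exhibit the inverse explicitly as $m \mapsto [g, g^{-1} \cdot m]$ for a local smooth section $g$ of $G \to G/H$ pulled back along $f$, and check this is well-defined and smooth using the local sections of the principal $H$-bundle $G \to G/H$; this is perhaps the more self-contained argument and the one I would ultimately write out.
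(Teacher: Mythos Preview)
Your proposal is correct and follows essentially the same route as the paper: part~(\ref{prop:hfb_diff_item_structure}) via the quotient manifold theorem for a free action of the compact group $H$, and part~(\ref{prop:hfb_diff_item_diffeo}) via the observation that $eH$ is a regular value of $f$, a dimension count, and a check that the map is bijective with bijective differential. Your treatment is in fact more detailed than the paper's sketch---you explicitly verify surjectivity of $\Phi$ and offer the local-section construction of the inverse as an alternative, whereas the paper simply asserts that ``with standard arguments, one then shows that the map's differential is surjective everywhere'' after noting injectivity and equality of dimensions.
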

\begin{proof}
The characterization of the manifold structure on $\hfb{G}{H}{F}$ in part (\ref{prop:hfb_diff_item_structure}) is a consequence of the basic fact that the quotient of a manifold under a free action of a compact Lie group is a manifold; see, for instance, \cite[Theorem 4.11]{kawakubo}. That $p$ is smooth now follows, because \(p \circ \pi: G \to G/H, g\mapsto gH\) is a smooth map, see e.g. \cite[Theorem 3.37]{kawakubo}. Furthermore, the aforementioned Theorem 4.11 in \cite{kawakubo} also tells us that $\pi$ has smooth local cross-sections, from which one can deduce that the action map is smooth. 
We turn to part (\ref{prop:hfb_diff_item_diffeo}).  One  observes that $eH$ is a regular value of $f$, that the two manifolds $\hfb{G}{H}{A}$ and $M$ have the same dimension and that the given map $\hfb{G}{H}{A} \to M$ is $G$-equivariant and injective. With standard arguments, one then shows that the map's differential is surjective everywhere.  
\end{proof}

Before stating an algebraic version of \cref{prop:hfb_diff} we recall that if $H$ is a closed algebraic subgroup of a linear algebraic group $G$, then the coset space $G/H$ carries a unique structure of algebraic variety such that the canonical surjection $G \to G/H$ is a so-called geometric quotient for the action of $H$ on $G$ from the right.  Equipped with this structure, as it always will be, $G/H$ is a smooth quasi-projective variety and the action map 
\[G \times (G/H) \to G/H, (g,g'H) \mapsto gg'H\]
is a morphism of algebraic varieties (see, for example, \cite[25.4.7 and 25.4.10]{tauvel-yu-laag}). 

The following proposition, which summarizes properties of the ``algebraic'' homogeneous fiber bundle, is extracted from \cite[Section 4.8]{popov&vinberg-invariant_theory}; see also \cite[Theorem 2.2]{timashev-embbook}.
\begin{proposition} \label{prop:hfb_algebraic}
Let $G$ be a complex connected reductive linear algebraic group, $H$ a closed algebraic subgroup and $F$ a smooth quasi-projective $H$-variety. Equip $\hfb{G}{H}{F}$ with the quotient Zariski-topology (i.e. the coarsest topology which makes $\pi: G\times F \to \hfb{G}{H}{F}$ continuous, where $G\times F$ carries the Zariski-topology) and with the  sheaf $\Oc$ which is the direct image under $\pi$ of the sheaf of $H$-invariant regular functions on $G \times F$. Then the following hold:
\begin{enumerate}[(a)]
\item The ringed space $(\hfb{G}{H}{F},\Oc)$ is a smooth complex algebraic variety;
\item The maps $\pi$ and $p$ and the action map $G \times (\hfb{G}{H}{F}) \to \hfb{G}{H}{F}$ are morphisms of algebraic varieties.
\end{enumerate}    
\end{proposition}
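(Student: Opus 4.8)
The plan is to realize the ringed space $(\hfb{G}{H}{F},\Oc)$ as a locally closed subvariety of a projective bundle over $G/H$, and then to check that its topology and structure sheaf are exactly the ones prescribed in the statement. First I would record the two elementary observations that drive everything: the $H$-action \eqref{eq:actionH} on $G\times F$ is \emph{free}, since already the action $g\mapsto gh^{-1}$ of $H$ on the first factor is free; and the first projection $G\times F\to G$ is $H$-equivariant, so composing it with the canonical quotient $G\to G/H$ yields an $H$-invariant morphism $\Phi\colon G\times F\to G/H$ that I expect to descend to $p$. Since $G$ is affine (being a linear algebraic group) and $F$ is quasi-projective, $G\times F$ is a smooth quasi-projective $H$-variety, so the task is to produce a geometric quotient of this free action that is again smooth and quasi-projective.

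The central construction is that of the bundle associated to a representation. Using a theorem of Sumihiro type, I would embed $F$ as a locally closed $H$-stable subvariety of $\PP(U)$ for a suitable finite-dimensional algebraic $H$-module $U$. Because quasi-coherent sheaves satisfy fppf (hence \'etale) descent, the $H$-module $U$ together with the principal $H$-bundle $G\to G/H$ yields an associated algebraic vector bundle $E:=\hfb{G}{H}{U}$ on $G/H$; this is the single case of the construction that is available before the proposition is proved, precisely because locally free sheaves, unlike general varieties, always descend. Its projectivization $\PP(E)$ is a projective bundle over the quasi-projective variety $G/H$, hence itself quasi-projective, and there is a canonical identification $\PP(E)\cong\hfb{G}{H}{\PP(U)}$. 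The $H$-equivariant embedding $F\into\PP(U)$ then induces a locally closed embedding $\hfb{G}{H}{F}\into\PP(E)$, equipping the set $\hfb{G}{H}{F}$ with the structure of a smooth quasi-projective variety $Y$; smoothness follows because $\Phi$ exhibits $Y\to G/H$ as an \'etale-locally trivial fiber bundle with smooth fiber $F$ over the smooth base $G/H$, so that locally $Y$ looks like $U_i\times F$.

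It then remains to verify that this variety structure on the underlying set is exactly the ringed space of the statement, and that the three maps are morphisms. Pulling back along an \'etale trivialization $V\to U\subseteq G/H$ of $G\to G/H$, the quotient map $\pi$ becomes the projection $(V\times H)\times F\to V\times F$, which is open and a topological quotient; by descent along this surjective open map the same holds for $\pi$ over $U$, so a subset of $Y$ is closed if and only if its $\pi$-preimage is closed, i.e.\ the variety topology coincides with the quotient Zariski topology. Likewise the $H$-invariant regular functions on $(V\times H)\times F$ for the twisted action are precisely the regular functions on $V\times F$, so the prescribed sheaf $\Oc=\pi_*(\text{$H$-invariant regular functions})$ agrees with the structure sheaf of $Y$. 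Finally $\pi$ is a morphism by construction; the $H$-invariant morphism $\Phi$ descends, by the universal property of the geometric quotient, to the morphism $p$; and the left $G$-action on $G\times F$ commutes with the $H$-action and descends to the action morphism $G\times Y\to Y$, using that $\mathrm{id}_G\times\pi$ is again a quotient map.

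The main obstacle is the input to the second paragraph, namely the $H$-equivariant locally closed embedding of the quasi-projective $H$-variety $F$ into some $\PP(U)$. This is delicate when $H$ is disconnected or $F$ is non-normal, since the existence of an $H$-linearized (very) ample line bundle can fail or require passing to a tensor power; in the generality needed here one invokes the equivariant completion and linearization results underlying \cite{popov&vinberg-invariant_theory}. A secondary, purely bookkeeping point is the identification of topology and sheaf in the third paragraph, where one must check that the \'etale-local trivializations of $G\to G/H$ are compatible enough to glue. Rather than reprove all of this, I would ultimately cite \cite[Section 4.8]{popov&vinberg-invariant_theory} and \cite[Theorem 2.2]{timashev-embbook}, where the construction is carried out in exactly this setting.
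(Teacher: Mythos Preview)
The paper does not actually prove this proposition: it is stated as being ``extracted from \cite[Section 4.8]{popov&vinberg-invariant_theory}; see also \cite[Theorem 2.2]{timashev-embbook}'' and no argument is given. Your proposal goes further and sketches the standard construction behind those references---embed $F$ $H$-equivariantly into a projective space $\PP(U)$, build the associated vector bundle $\hfb{G}{H}{U}$ by descent along the \'etale-locally trivial principal bundle $G\to G/H$, and realize $\hfb{G}{H}{F}$ as a locally closed subvariety of its projectivization---so in effect you are supplying what the paper omits. The outline is correct, and you are right to flag the linearization step (the existence of the equivariant embedding $F\hookrightarrow\PP(U)$) as the nontrivial input; this is precisely what is handled in the cited sources, so your final decision to defer to them is consistent with the paper's own treatment.
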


The next proposition recalls a standard fact in the theory of complex algebraic varieties, see \cite[Nr.\ 5]{GAGA}
\begin{proposition}\label{prop:analytic}
\begin{enumerate}[(a)]
\item If $X$ is a smooth complex algebraic variety, then $X$ admits a unique structure as a complex manifold such that every algebraic chart of $X$ is a holomorphic chart. We write $X^h$ for $X$ equipped with this structure of complex manifold. 
\item If $X$ and $Y$ are smooth complex algebraic varieties and $f: X \to Y$ a morphism of algebraic varieties, then $f: X^h \to Y^h$ is holomorphic.    
\end{enumerate}
\end{proposition}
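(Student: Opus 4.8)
The plan is to prove both statements by reducing to the local affine picture, where the essential input is the elementary fact that a regular function is holomorphic (being, locally in the classical topology, a quotient of polynomials with non-vanishing denominator) together with the holomorphic implicit function theorem.

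For part (a), I would first cover $X$ by Zariski-open affine subvarieties $U_i$. Since $X$ is smooth, each $U_i$ is a smooth affine variety, which I realize as a closed subvariety of some $\C^{N_i}$ cut out by polynomials $g_1,\ldots,g_r$. The Jacobian criterion for smoothness guarantees that at each point of $U_i$ the Jacobian matrix $(\partial g_j/\partial z_k)$ has constant rank $N_i - \dim U_i$, so the holomorphic implicit function theorem shows that $U_i$, viewed inside $\C^{N_i}$ with its classical topology, is a complex submanifold; I write $U_i^h$ for the resulting complex manifold. Here is exactly where smoothness is indispensable, and this is the one genuinely analytic (as opposed to formal) ingredient of the whole argument.

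Next I would verify the gluing. On an overlap $U_i \cap U_j$ the comparison of the two complex structures is effected by the restriction of the ambient coordinate functions, which are regular on the overlap and therefore holomorphic by the observation above; hence the identity map is biholomorphic between the induced structures, and the $U_i^h$ glue to a complex manifold structure on the underlying classical-topology space of $X$. This space is Hausdorff because $X$ is separated and second countable because $X$ is of finite type, so it is a bona fide complex manifold $X^h$, and by construction every algebraic chart is holomorphic. For uniqueness, suppose two complex manifold structures on $X$ both make all algebraic charts holomorphic. The coordinate functions of any affine embedding are algebraic charts, hence holomorphic for both structures and providing local holomorphic coordinates; it follows that the identity $X \to X$ is biholomorphic between the two structures, so they coincide.

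Part (b) is then immediate: a morphism $f\colon X \to Y$ is, in suitable affine charts on source and target, described in the target coordinates by regular functions on (an open subset of) $X$, and by part (a) these are holomorphic; hence $f\colon X^h \to Y^h$ is holomorphic. The only point requiring care, and thus the main obstacle, is the existence assertion in part (a): one must correctly set up the local complex manifold structures via the implicit function theorem and check their compatibility on overlaps, together with the separatedness and finite-type hypotheses needed to ensure $X^h$ is Hausdorff and second countable; the uniqueness and part (b) are then purely formal consequences of the principle that regular functions and morphisms are holomorphic.
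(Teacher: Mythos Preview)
Your proof sketch is correct and follows the standard route (affine cover, Jacobian criterion plus the holomorphic implicit function theorem, gluing via the fact that regular functions are holomorphic, and the formal consequences for uniqueness and functoriality). Note, however, that the paper does not actually prove this proposition: it is stated as a standard fact with a reference to \cite[Nr.~5]{GAGA} and no proof is given. Your outline is essentially the argument one finds in that reference, so there is nothing to compare beyond observing that you have supplied what the paper chose to cite.
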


A complex manifold  carries a natural structure of a differentiable manifold, by viewing the holomorphic charts as $\mathcal{C}^{\infty}$-charts. Thus \cref{prop:analytic} also equips every smooth algebraic variety with a structure of differentiable manifold, which we will call \textbf{standard}. Whenever we view a smooth algebraic variety as a differentiable manifold it will be equipped with this standard structure. In \cref{sec:triangles} we will make use of the following comparison result. We include a proof for completeness. 

\begin{proposition} \label{prop:hfb_comparison}
Consider the subgroup
\begin{equation} \label{eq:defB-}
B^-:=\left\lbrace\begin{pmatrix} a & 0 \\ c & d\end{pmatrix} : a,c,d \in \C, ad\neq 0\right\rbrace
\end{equation}
of $\GL(2)$ and recall the torus $T \inn \U(2)$ from \cref{ex:u2}. If $F$ is a smooth quasi-projective $B^{-}$-variety and we equip $\hfb{\GL(2)}{B^-}{F}$ with its standard structure as a differentiable manifold, then    
\begin{equation} \label{eq:compiso}
\U(2)\times_T F \to \GL(2)\times_{B-} F, [g,f] \mapsto [g,f]
\end{equation}
is a $\U(2)$-equivariant diffeomorphism. 
\end{proposition}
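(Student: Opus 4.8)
The plan is to reduce the statement to the special case $F = \{\mathrm{pt}\}$ --- that is, to the assertion that the inclusion $\U(2)\into\GL(2)$ induces a $\U(2)$-equivariant diffeomorphism $\U(2)/T \to \GL(2)/B^-$ --- and then to deduce the general case formally from \cref{prop:hfb_diff}(\ref{prop:hfb_diff_item_diffeo}). To begin, note that the map \eqref{eq:compiso} is well defined: if $[g,f]=[g',f']$ in $\hfb{\U(2)}{T}{F}$, so that $g'=gt^{-1}$ and $f'=t\cdot f$ for some $t\in T$, then, because $T\inn B^-$, the same relation shows $[g,f]=[g',f']$ in $\hfb{\GL(2)}{B^-}{F}$. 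The map is $\U(2)$-equivariant directly from its formula.

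The geometric heart of the argument consists of two elementary facts about $\GL(2)$: (i) $\GL(2) = \U(2)\,B^-$, and (ii) $\U(2)\cap B^- = T$. For (ii), a unitary lower-triangular matrix has orthonormal columns, and orthogonality of the two columns forces the off-diagonal entry to vanish, so the matrix is diagonal and hence lies in $T$. For (i), applying the Gram--Schmidt process to the columns of $g\in\GL(2)$ in reverse order --- normalizing the second column first, then orthonormalizing the first against it --- expresses $g = ub$ with $u\in\U(2)$ and $b\in B^-$. Together, (i) and (ii) say precisely that the $\U(2)$-action on $\GL(2)/B^-$ is transitive with the stabilizer of the base point $eB^-$ equal to $\U(2)\cap B^- = T$. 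Since $\U(2)$ is compact, the orbit map is proper, so it induces a $\U(2)$-equivariant diffeomorphism $\bar\varphi\colon \U(2)/T \to \GL(2)/B^-$ (the orbit, being compact, is embedded, and by transitivity it is all of $\GL(2)/B^-$).

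With $\bar\varphi$ in hand I would apply \cref{prop:hfb_diff}(\ref{prop:hfb_diff_item_diffeo}) with $G=\U(2)$, $H=T$, and $M = \hfb{\GL(2)}{B^-}{F}$. By \cref{prop:hfb_algebraic} this $M$ is a smooth algebraic variety, hence by \cref{prop:analytic} a differentiable manifold carrying a smooth $\GL(2)$-action, and the projection $p\colon M \to \GL(2)/B^-$ is a morphism, hence smooth. Restricting to $\U(2)$ and composing gives a smooth $\U(2)$-equivariant map $q := \bar\varphi^{-1}\circ p\colon M \to \U(2)/T$. Its fiber $A := q^{-1}(eT) = p^{-1}(eB^-)$ is the fiber of the homogeneous bundle over the base point, which is $T$-equivariantly identified with $F$ via $f\mapsto[e,f]$. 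Then \cref{prop:hfb_diff}(\ref{prop:hfb_diff_item_diffeo}) yields a $\U(2)$-equivariant diffeomorphism $\hfb{\U(2)}{T}{A}\to M$, $[g,a]\mapsto g\cdot a$, which under the identification $A\cong F$ sends $[g,f]\mapsto[g,f]$, i.e.\ is exactly the map \eqref{eq:compiso}.

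The step requiring the most care is the bookkeeping of smooth structures: the source of \eqref{eq:compiso} carries the quotient manifold structure of \cref{prop:hfb_diff}, whereas the target carries the standard structure coming from its algebraic structure via \cref{prop:analytic}, and these are \emph{a priori} unrelated. The whole point of routing through \cref{prop:hfb_diff}(\ref{prop:hfb_diff_item_diffeo}) is that it reconciles them, since that result only requires $M$ to be a manifold with a smooth $\U(2)$-action admitting a smooth equivariant map to $\U(2)/T$ --- all of which the algebraic picture supplies. The one remaining technical point is to verify that $f\mapsto[e,f]$ identifies $F$ with the fiber $p^{-1}(eB^-)$ compatibly with the $T$-actions, which is part of the standard local structure of the homogeneous fiber bundle recorded in \cref{prop:hfb_algebraic}.
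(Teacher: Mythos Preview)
Your proposal is correct and follows essentially the same route as the paper: both arguments first establish the $\U(2)$-equivariant diffeomorphism $\U(2)/T \to \GL(2)/B^-$ (the paper asserts transitivity and cites \cite[Corollary~4.4]{kawakubo}, while you supply the Gram--Schmidt and intersection computations explicitly), then compose its inverse with $p\colon \hfb{\GL(2)}{B^-}{F}\to \GL(2)/B^-$ and invoke \cref{prop:hfb_diff}(\ref{prop:hfb_diff_item_diffeo}). Your added remarks on well-definedness and on reconciling the two smooth structures are welcome clarifications but do not change the underlying strategy.
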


\begin{proof}
We equip $\GL(2)$ and $\GL(2)/B^-$ with their standard structures as differentiable manifolds.  Then  $\U(2)$ is a closed subgroup of the Lie group $\GL(2)$.  The inclusion $\U(2) \to \GL(2)$ induces a transitive smooth action of $\U(2)$ on $\GL(2)/B^-$.  Since $B ^- \cap \U(2) = T$ we thus obtain ---using \cite[Corollary 4.4]{kawakubo} for example--- a $\U(2)$-equivariant diffeomorphism $\U(2)/T \to \GL(2)/B^-$. Let $\varphi: \GL(2)/B^- \to \U(2)/T$ be the inverse diffeomorphism and recall the map
\[p: \hfb{\GL(2)}{B^-}{F} \to \GL(2)/B^-, [g,f] \to gB^{-}.\]
Then $\varphi \circ p : \hfb{\GL(2)}{B^-}{F} \to \U(2)/T$ is a $\U(2)$-equivariant smooth map. 
The claim now follows from (\ref{prop:hfb_diff_item_diffeo}) in \cref{prop:hfb_diff}.
\end{proof}

\begin{remark} The argument for \cref{prop:hfb_comparison} actually yields the following more general fact. Suppose $K$ is a compact connected Lie group and $G$ its complexification. Let $T$ be a maximal torus of $K$ and $B$ be a Borel subgroup of $G$ containing $T$. If $F$ is a quasi-projective $B$-variety and 
we equip $\hfb{G}{B}{F}$ with its standard structure as a differentiable manifold, then    
\[
\hfb{K}{T}{F} \to \hfb{G}{B}{F}, [k,f] \mapsto [k,f]
\]
is a $K$-equivariant diffeomorphism. 
\end{remark}

\section{Multiplicity free \texorpdfstring{$\U(2)$}{U(2)}-actions with trivial principal isotropy group}
\label{sec:mfu2}
Let $M$ be a multiplicity free $\U(2)$-manifold with trivial principal isotropy group. It follows from \cref{thm:knop_classif_mf} and \cref{ex:u2kac} that ``near'' a vertex lying on the wall of the Weyl chamber $\wc$, the momentum polytope $\mop(M)$ of $M$  ``looks like'' the weight monoid of a smooth affine spherical $\GL(2)$-variety. We distill \cref{table_gl2_pointed_wl} of all relevant smooth affine spherical $\GL(2)$-varieties from a result in  \cite{ppvs}. This then allows us to make the conditions \eqref{eq:knop_classif_mf_tpi_item} and \eqref{eq:knop_classif_mf_cone_item} very concrete in \cref{prop:mom_polytopes}.

\subsection*{Weight monoids of smooth affine spherical \texorpdfstring{$\GL(2)$}{GL(2)}-varieties} 
Table 5 in \cite{ppvs} contains all the smooth affine spherical $(\SL(2)\times \C^{\times})$-varieties and their weight monoids. We briefly explain how to use this classification to explicitly determine the weight monoids of smooth affine $\GL(2)$-spherical varieties. We will make use of the notation in \cref{ex:u2}. In particular, the weight lattice $\wl$ of $\GL(2)$ is spanned by the weights $\om_1,\om_2$.

As in \cite{ppvs} we choose 
\[H=\left\{\begin{pmatrix}
a & 0 \\ 0 & a^{-1}
\end{pmatrix}: a \in \C^{\times}\right\} \times \C^{\times}\] 
as the maximal torus and
\[U=\left\{\begin{pmatrix}
1 & b \\ 0 & 1
\end{pmatrix}: b \in \C\right\} \times \{1\}\] 
as the maximal unipotent subgroup of $\SL(2) \times \C^{\times}$ normalized by $H$. 
The weights $\om: H \to \C^{\times}$ and $\eps: H \to \C^{\times}$ defined by
\[\om\left(\begin{pmatrix}
a & 0 \\ 0 & a^{-1}
\end{pmatrix},z\right) = a, \text{ and } \varepsilon\left(\begin{pmatrix}
a & 0 \\ 0 & a^{-1}
\end{pmatrix},z\right) = z\]
span the weight lattice $\Hom(H,\C^{\times})$ of $\SL(2)\times \C^{\times}$ and the monoid of dominant weights corresponding to $U$ is
\[\<\om,\eps,-\eps\>_{\N} \subset \Hom(H,\C^{\times}).\] 
We define the isogeny
\begin{equation} \label{eq:isogeny}
\varphi\colon \SL(2) \times \C^{\times} \to \GL(2) \colon (A,z) \mapsto zA
\end{equation}
and denote the induced (injective) map $\wl \to \Hom(H,\C^{\times})$ on weight lattices by $\varphi^*$.  
Then $\wm \subset \dw$ is the weight monoid of a smooth affine spherical $\GL(2)$-variety if and only if $\varphi^*(\wm) \inn \<\om,\eps,-\eps\>_{\N}$ is the weight monoid of a smooth affine spherical $(\SL(2)\times \C^{\times})$-variety. 
Since 
\[\varphi^*(\om_1) = \om + \varepsilon \text{ and } \varphi^*(\om_2) = 2 \varepsilon\]
we have
\[
\varphi^*(\wl) = \{a\omega + b\varepsilon \colon a \equiv b \mod 2\}
\] and it follows that 
the images under $\varphi^*$ of the weight monoids of smooth affine spherical $\GL(2)$-varieties are exactly those weight monoids in  \cite[Table 5]{ppvs} which are subsets of
$ \{a\omega + b\varepsilon \colon a \equiv b \mod 2\}$.

In view of part (\ref{thm:knop_classif_mf_e_item}) of \cref{thm:knop_classif_mf}  we restrict ourselves to those weight monoids $\wm$ of smooth affine spherical $\GL(2)$-varieties such that the cone $\Rp\wm$ generated by $\wm$ is pointed (as defined in \cref{rem_thm_sjam_mp}(\ref{rem_thm_sjam_mp_a})) and such that $\Z\wm  = \wl$. This yields the weight monoids listed in \cref{table_gl2_pointed_wl}. 
In fact, in view of Knop's condition \eqref{eq:knop_classif_mf_cone_item}, we list the \emph{weight cones} $\Rp \wm \subset \wc$ instead of the weight monoids. The cone $\Rp\wm$ determines $\wm$ because we have fixed the lattice $\Z\wm$ generated by $\wm$ to be $\wl$ and because of the equality $\wm = \Z\wm \cap \Rp\wm$, which follows from the fact that smooth varieties are normal. In summary, these computations yield the following proposition.
\begin{proposition} \label{prop:GL2pointedcone}
If $X$ is a smooth affine spherical $\GL(2)$-variety such that $\Z\wm(X) = \wl$ and such that $\Rp \wm(X)$ is pointed, then $\Rp\wm(X)$ is one of the cones listed in \cref{table_gl2_pointed_wl}. 
\end{proposition}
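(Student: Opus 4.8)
The plan is to reduce the statement entirely to the classification in \cite[Table 5]{ppvs} by means of the isogeny correspondence recalled in the paragraphs preceding the proposition, and then to carry out the finite verification that this correspondence prescribes.

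First I would make the reduction precise. By the equivalence established above, a submonoid $\wm \subset \dw$ is the weight monoid of a smooth affine spherical $\GL(2)$-variety if and only if $\varphi^*(\wm)$ is one of the weight monoids in \cite[Table 5]{ppvs} and is contained in the index-$2$ sublattice $\varphi^*(\wl) = \{a\om + b\eps : a \equiv b \mod 2\}$. Since $\varphi^*$ is a lattice isomorphism onto its image and $\Rp\varphi^*(\wm) = \varphi^*(\Rp\wm)$, the two hypotheses of the proposition transport transparently: the condition $\Z\wm = \wl$ becomes $\Z\varphi^*(\wm) = \varphi^*(\wl)$, and $\Rp\wm$ is pointed if and only if $\Rp\varphi^*(\wm)$ is pointed, because a linear isomorphism onto its image preserves the property of containing no line. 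It therefore suffices to single out those entries $\wm'$ of \cite[Table 5]{ppvs} satisfying simultaneously: (i) $\wm' \subset \{a\om + b\eps : a \equiv b \mod 2\}$; (ii) $\Z\wm' = \{a\om + b\eps : a \equiv b \mod 2\}$; and (iii) $\Rp\wm'$ contains no line, and then to transport each surviving cone back through $(\varphi^*)^{-1}$.

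Concretely, I would go through \cite[Table 5]{ppvs} entry by entry. For a monoid $\wm'$ presented by generators I express each generator in the basis $\{\om, \eps\}$ and test the parity condition $a \equiv b \mod 2$; as this congruence defines a subgroup, checking it on the finitely many generators suffices for (i). For (ii) I compute the subgroup generated by the coordinate vectors of the generators and compare it with the index-$2$ sublattice. For (iii) I check that the generators lie in an open half-plane, the obstruction being the simultaneous presence of $\eps$ and $-\eps$ (equivalently, of $\om_2$ and $-\om_2$ on the $\GL(2)$-side), which produces the line $\R\eps$ and must be excluded. Finally, inverting $\varphi^*$ on its image by means of $\varphi^*(\om_1) = \om + \eps$ and $\varphi^*(\om_2) = 2\eps$, I re-express each surviving cone $\Rp\wm'$ in terms of $\om_1, \om_2, -\om_2$, obtaining precisely the cones recorded in \cref{table_gl2_pointed_wl}.

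The conceptual content is already in place, so the main obstacle is one of careful bookkeeping rather than of ideas: \cite[Table 5]{ppvs} lists many weight monoids, and all three conditions together with the coordinate change must be checked for each of them without omission or duplication. I expect the most delicate points to be tracking which table entries survive the parity restriction (i) while still spanning the index-$2$ sublattice in (ii), and correctly recognizing when two distinct $(\SL(2) \times \C^\times)$-monoids map to the same $\GL(2)$-cone, so as to guarantee that the final list in \cref{table_gl2_pointed_wl} is both complete and non-redundant.
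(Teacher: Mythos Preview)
Your proposal is correct and follows exactly the approach the paper takes: the text preceding the proposition already spells out the isogeny reduction to \cite[Table 5]{ppvs}, the parity constraint cutting out $\varphi^*(\wl)$, and the restriction to pointed cones with $\Z\wm = \wl$, and the proposition is then stated as a summary of ``these computations.'' Your write-up is a faithful and slightly more detailed elaboration of the same finite verification.
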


\begin{remark}
As they provide local models of multiplicity free $\U(2)$-manifolds with trivial principal isotropy group, we have included in \cref{table_gl2_pointed_wl} the (unique) smooth affine spherical $\GL(2)$-varieties $X$ that realize the listed weight cones $\Rp\wm(X)$. We leave the verification that each variety $X$ in the table has the given weight cone to the reader. This can be deduced from \cite[Table 5]{ppvs} using the isogeny $\varphi$ defined in \eqref{eq:isogeny} or by using basic facts in the representation theory of $\GL(2)$ to determine the highest weights of $\GL(2)$ that occur in the coordinate ring $\C[X]$ of $X$. 
\end{remark}

\begin{longtable}{|c|c|c|c|}
\caption{Pointed weight cones of smooth affine spherical $\GL(2)$--varieties $X$ with  $\Z\wm(X)=\wl$. The ``Case'' numbers refer to those in \cite[Table 5]{ppvs}}\label{table_gl2_pointed_wl}\\
\hline
Case & $X$  & $\Rp\wm(X)$ &  parameters\\
\hline
11 &$\bigl(\C^2 \otimes \C_{\det^{-(k+1)}}\bigr) \times \C_{\det^{-\ell}}$ & $\cone((k+1)\varepsilon_1+k\varepsilon_2, \ell(\varepsilon_1+\varepsilon_2))$ & \begin{tabular}{l} $k \in \Z,$\\ $\ell \in \{1,-1\}$\end{tabular} \\
\hline
14 & $\GL(2) \times _{\TC} \C_{-(j\alpha+ \varepsilon_1)}$ &$\cone(\alpha, j\alpha + \varepsilon_1)$  & $j \in \N$ \\
\hline
14 & $\GL(2) \times _{\TC} \C_{-(j\alpha- \varepsilon_2)}$ &$\cone(\alpha, j\alpha - \varepsilon_2)$  & $j \in \N$ \\
\hline
15 &$\GL(2)\left/\left\lbrace \begin{pmatrix} z^j & 0 \\ 0 & z^{j+1}\end{pmatrix}: z \in \C^{\times}\right\rbrace \right.$&$\cone(j\alpha+\varepsilon_1, j\alpha-\varepsilon_2)$   &$j \in \N$ \\
\hline
\multicolumn{4}{|l|}{$\alpha = \varepsilon_1-\varepsilon_2$ as in \cref{ex:u2}.}
\\
\multicolumn{4}{|l|}{In Case 11, $\C^2$ stands for the defining representation of $\GL(2)$.}\\
\hline
\end{longtable}

\subsection*{Momentum polytopes}
In \cref{prop:mom_polytopes} we specialize Knop's \cref{thm:knop_classif_mf} to the case $K=\U(2)$. We continue to use the notation in \cref{ex:u2}. In particular, $\alpha = \varepsilon_1 - \varepsilon_2$ is the simple root of $\U(2)$. 
Combining \cref{table_gl2_pointed_wl} with \cref{thm:knop_classif_mf} we obtain the following.
\begin{proposition} \label{prop:mom_polytopes}
Let $\Pc$ be a convex polytope in $\ft_+$. Then $\Pc$ is the momentum polytope of a (unique) multiplicity free $\U(2)$-manifold with trivial principal isotropy group if and only if all of the following conditions are satisfied:
\begin{enumerate}[(1)]
\item $\dim \Pc = 2$; \label{item:mom_poly_1}
\item $\Pc$ is rational with respect to $\wl$, i.e. for every two vertices $a,b$ of  $\Pc$ connected by the edge $[a,b]$ of $\Pc$, the intersection  $\R(b-a) \cap \wl$ is nonempty (we will denote the primitive elements of $\wl$ on the extremal rays of the cone $\R_{\ge 0}(\Pc-a)$ by  $\rho_1^a, \rho_2^a$); \label{item:mom_poly_2}
\item (Delzant) If $a$ is a vertex of $\Pc$ with $\<\alpha^{\vee},a\> >0$, then $(\rho_1^a,\rho_2^a)$ is a basis of $\wl$; \label{item:mom_poly_3}
\item If $a$ is a vertex of $\Pc$ with $\<\alpha^{\vee},a\> =0$, then $\{\rho^a_1, \rho^a_2\}$ is one of the following sets: \label{item:mom_poly_4}
\begin{align}
& \{\varepsilon_1 + \varepsilon_2, k(\varepsilon_1 + \varepsilon_2) + \varepsilon_1\} \quad \text{ for some }k \in \Z; \label{eq:cone_1}\\
& \{-(\varepsilon_1 + \varepsilon_2), k(\varepsilon_1 + \varepsilon_2) + \varepsilon_1\} \quad \text{ for some }k \in \Z; \label{eq:cone_2}\\
& \{\alpha, j\alpha+\varepsilon_1\}  \quad \text{ for some }j \in \N; \label{eq:alpha_cone_1} \\
& \{\alpha, j\alpha-\varepsilon_2\}  \quad \text{ for some }j \in \N; \label{eq:alpha_cone_2}\\
& \{j\alpha+\varepsilon_1, j\alpha-\varepsilon_2\}  \quad \text{ for some }j \in \N. \label{eq:alpha_cone_3}
\end{align}
\end{enumerate} 
\end{proposition}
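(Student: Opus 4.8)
The plan is to obtain the proposition by making Knop's \cref{thm:knop_classif_mf} explicit at each vertex of $\Pc$. Uniqueness of the manifold is immediate from part (\ref{thm:knop_classif_mf_u_item}), so only the characterization of momentum polytopes needs proof. By part (\ref{thm:knop_classif_mf_e_item}), $\Pc$ is the momentum polytope of a multiplicity free $\U(2)$-manifold with trivial principal isotropy group if and only if for every vertex $a$ of $\Pc$ there is a smooth affine spherical $(K_a)^{\C}$-variety $X_a$ with $\Z\wm(X_a) = \wl$ (condition \eqref{eq:knop_classif_mf_tpi_item}) and $\Rp\wm(X_a) = \R_{\ge 0}(\Pc - a)$ (condition \eqref{eq:knop_classif_mf_cone_item}). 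By \cref{ex:u2kac}, $(K_a)^{\C}$ is $\TC$ when $\<\alpha^{\vee},a\> > 0$ and $\GL(2)$ when $\<\alpha^{\vee},a\> = 0$, the Weyl chamber in the latter case being all of $\wc$. I would therefore carry out the proof as a vertex-by-vertex translation, separating these two cases.

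First I would settle conditions (\ref{item:mom_poly_1}) and (\ref{item:mom_poly_2}). Fix a vertex $a$; since $a$ is a vertex, the cone $\R_{\ge 0}(\Pc - a)$ is pointed. If the $X_a$ above exists, then $\Z\wm(X_a) = \wl$ has rank two, so $\wm(X_a)$ spans $\ft^*$ and hence so does $\R_{\ge 0}(\Pc - a) = \Rp\wm(X_a)$; this forces $\dim\Pc = 2$, which is (\ref{item:mom_poly_1}). Moreover $\Rp\wm(X_a)$ is generated by lattice vectors and is thus rational, so each of the two edges of $\Pc$ at $a$ has rational direction; as every edge joins two vertices, this gives (\ref{item:mom_poly_2}). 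Conversely, (\ref{item:mom_poly_1}) and (\ref{item:mom_poly_2}) are exactly what is needed for the two extremal rays of the pointed two-dimensional cone $\R_{\ge 0}(\Pc - a)$ to carry well-defined primitive generators $\rho_1^a, \rho_2^a$, and then $\R_{\ge 0}(\Pc - a) = \cone(\rho_1^a, \rho_2^a)$.

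Next I would translate the existence of $X_a$ into (\ref{item:mom_poly_3}) and (\ref{item:mom_poly_4}). At a wall vertex ($\<\alpha^{\vee},a\> = 0$) the variety $X_a$ is a smooth affine spherical $\GL(2)$-variety with $\Z\wm(X_a) = \wl$ and pointed weight cone $\Rp\wm(X_a) = \cone(\rho_1^a, \rho_2^a)$; by \cref{prop:GL2pointedcone} this cone must occur in \cref{table_gl2_pointed_wl}, and conversely each cone in the table is realized by the variety recorded next to it. A routine computation shows that the listed generators are primitive and linearly independent and that, rewriting $(k+1)\varepsilon_1 + k\varepsilon_2 = k(\varepsilon_1+\varepsilon_2)+\varepsilon_1$ and $\ell(\varepsilon_1 + \varepsilon_2) = \pm(\varepsilon_1 + \varepsilon_2)$, the five families of cones in the table correspond precisely to the five sets \eqref{eq:cone_1}, \eqref{eq:cone_2}, \eqref{eq:alpha_cone_1}, \eqref{eq:alpha_cone_2}, \eqref{eq:alpha_cone_3}; this is (\ref{item:mom_poly_4}). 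At an interior vertex ($\<\alpha^{\vee},a\> > 0$) we have $(K_a)^{\C} = \TC$, so $X_a$ is a smooth affine spherical (equivalently toric) $\TC$-variety with $\Z\wm(X_a) = \wl$ and pointed two-dimensional weight cone. Pointedness excludes any $\C^{\times}$-factor and forces $X_a \cong \C^2$ acted on through two linearly independent characters whose $\N$-span is $\wm(X_a)$; the condition $\Z\wm(X_a) = \wl$ then holds if and only if $(\rho_1^a, \rho_2^a)$ is a basis of $\wl$, which is (\ref{item:mom_poly_3}).

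Putting these equivalences together vertex by vertex yields the proposition, since the existence of all the $X_a$ is equivalent to $\Pc$ being a momentum polytope. The step I expect to require the most care is the interior-vertex case: the needed classification of smooth affine toric $\TC$-varieties with pointed full weight cone is the torus analogue of \cref{prop:GL2pointedcone} — essentially Delzant's smoothness criterion — and is not contained in \cref{table_gl2_pointed_wl}, so it must be supplied separately. The remaining effort is the bookkeeping that matches the $\GL(2)$-cones of the table with the explicit primitive generators recorded in (\ref{item:mom_poly_4}).
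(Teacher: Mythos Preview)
Your proposal is correct and follows essentially the same approach as the paper: specialize Knop's \cref{thm:knop_classif_mf} vertex by vertex, invoking \cref{prop:GL2pointedcone} and \cref{table_gl2_pointed_wl} at wall vertices and the standard classification of smooth affine toric varieties (the paper cites \cite[Section 2.1]{fulton-toric}) at interior vertices. Your write-up is somewhat more explicit than the paper's, in particular in deriving conditions (\ref{item:mom_poly_1}) and (\ref{item:mom_poly_2}) and in spelling out the toric step, but the structure of the argument is the same.
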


\begin{proof}
Thanks to Knop's \cref{thm:knop_classif_mf}, the cones in the third column of \cref{table_gl2_pointed_wl} describe the "local" shape, near a vertex that lies on the wall of the Weyl chamber $\wc$, of the momentum polytope $\mop$ of a multiplicity free $\U(2)$-manifold with trivial principal isotropy group. If $a$ is a vertex of $\mop$ that lies in the interior of $\wc$, we have $(K_a)^{\C} = T^{\C}$. The shape of $\mop$ near $a$ must be as  described in part (\ref{item:mom_poly_3}) of the proposition due to the well-known structure of the weight monoids of smooth affine toric varieties (see, e.g. \cite[Section 2.1]{fulton-toric}). The proposition follows.   
\end{proof}

In \cref{rem:fixpoints} we give some geometric information related to vertices of the momentum polytopes under consideration in \cref{prop:mom_polytopes}. We first introduce some additional notation. Suppose $(M,\mu)$ is a multiplicity free $\U(2)$-manifold with momentum polytope $\Pc$. Let 
\begin{equation}
\Psi:M \to \mop \inn \wc,\ m \mapsto  \mu(K\cdot m)\cap \wc \label{eq:eqvmm}
\end{equation}
be the \textbf{invariant momentum map} of $M$ and let $\mu_T:M \to \ft^*$ be the momentum map of $M$ considered as a $T$-manifold, that is $\mu_T = r \circ \mu$, where $r: \fk^* \onto \ft^*$ is the restriction map. We recall from \cref{rem:mf}(\ref{rem:mf_item_b}) that every fiber of $\Psi$ is a $K$-orbit and from  \cref{prop:convexhullweylorbit} that $\mu_T(M)$ is the convex hull of $\Pc \cup s_{\alpha}(\Pc)$. 

\begin{remark}[Vertices and fixpoints] \label{rem:fixpoints}
Let $\Pc$ be a polytope satisfying the conditions in \cref{prop:mom_polytopes} and let $M$ be the multiplicity free $\U(2)$-manifold with trivial principal isotropy group such that $\Pc(M)=\Psi(M)=\Pc$. The local models $X$ of $M$ given in \cref{table_gl2_pointed_wl} yield the following information (see also \cref{rem_after_knop_classif}(\ref{rem_after_knop_classif_item_locmod})). 
\begin{enumerate}[(a)]
\item If $a$ is as in case (\ref{item:mom_poly_3}) of \cref{prop:mom_polytopes}, $\Psi^{-1}(a)$ contains exactly two $T$-fixpoints $p_1,p_2$ and $\mu_T(p_1) = s_{\alpha}(\mu_T(p_2))$.
\item If the extremal rays at $a$ are those  in \cref{eq:cone_1} or \cref{eq:cone_2}, then there is a unique $T$-fixpoint $p$ in $\Psi^{-1}(a)$. Moreover $\mu_T(p) = a$ and $p$ is even fixed by $\U(2)$.
\item \label{fixponts_halfrefl} In the cases of  \cref{eq:alpha_cone_1,eq:alpha_cone_2}, there are exactly two $T$-fixpoints $p_1,p_2$ in $\Psi^{-1}(a)$. Moreover $\mu_T(p_1)=\mu_T(p_2) = a$. 
\item \label{fixpoints_refl} In the case of \cref{eq:alpha_cone_3}, $\Psi^{-1}(a)$ does not contain any $T$-fixpoints. 
\end{enumerate}  
\end{remark}

\subsection*{Invariant compatible complex structures} 
We recall that a complex structure $J$ on a symplectic manifold $(M,\om)$ is called \emph{compatible} if $(M,\om,J)$ is K\"ahler. In \cite{woodward-kaehler}, Woodward showed that Delzant's result in \cite[\S 5]{delzant-abel}, that all compact multiplicity free torus actions admit an invariant compatible complex structure, does not generalize to the non-abelian case (see also \cref{ex:woodward} below). 
In this section we present a generalization of a criterion of Woodward's for the existence of a $\U(2)$-invariant compatible complex structure on certain multiplicity free $\U(2)$-manifolds. More precisely, in \cite[Theorem 8.8]{woodward-spherical}, Woodward provided such a criterion for a  class of multiplicity free $\SO(5)$-manifolds and remarked that it could be adapted to other rank $2$ groups. In case the acting group is $\U(2)$, Woodward's criterion applies to those multiplicity free $\U(2)$-manifolds with trivial principal isotropy whose momentum polytope has a vertex on the wall of the Weyl chamber such that, near this vertex, the momentum polytope looks like one of the cones spanned by the vectors in \eqref{eq:alpha_cone_3}. Thanks to the work \cite{martens-thaddeus-nonabsc} of J.~Martens and M.~Thaddeus on non-abelian symplectic cutting
we can show that his criterion can be used to decide the existence of a $\U(2)$-invariant compatible complex structure for any multiplicity free $\U(2)$-manifold with trivial principal isotropy, see \cref{prop:invcompl}. 
In the proof of  \cref{prop:invcompl}  we use  the so-called \emph{extension criterion} of Tolman \cite{tolman-exnonkaehlertorusactions} to show in \cref{prop:atiyah} that a multiplicity free $\U(2)$-manifold with trivial principal isotropy group carries a $\U(2)$-invariant compatible complex structure if and only if it carries a $T$-invariant compatible complex structure. Woodward had proved an analogous statement for certain multiplicity free $\SO(5)$-manifolds in \cite[Theorem 9.1]{woodward-kaehler}. \Cref{prop:atiyah} also gives a second K\"ahlerizability criterion for our $\U(2)$-manifolds in terms of the $T$-momentum polytope and the images of the $T$-fixpoints under the $T$-momentum map.

Recall that the wall of the Weyl chamber $\wc$ of $\U(2)$ is its subset 
$\{\lambda \in \ft^*: \<\alpha^{\vee}, \lambda\>=0\}$, where $\alpha^{\vee}$ is the simple coroot as in \eqref{eq:coroot} of \cref{ex:u2}.

\begin{definition} \label{def:positive_face}
Let $\mop$ be a  $2$-dimensional polytope in the Weyl chamber $\ft_+$ of $\U(2)$, let $F$ be an edge of $\mop$ and let $\mathsf{n}_F$ be an inward-pointing normal vector to $F$. We call $F$ a \textbf{positive} edge of $\mop$ if $\<\alpha^{\vee}, \mathsf{n}_F\> > 0$.
\end{definition}

\begin{remark}
It follows from \cref{prop:mom_polytopes} that if the momentum polytope $\mop(M)$ of a multiplicity free $\U(2)$-manifold with trivial principal isotropy group has exactly one vertex $a$ on the wall of $\wc$, then $\Rp(\mop(M)-a)$ is the cone spanned by one of the sets $\{\rho_1^a, \rho_2^a\}$ in \cref{eq:alpha_cone_1,eq:alpha_cone_2,eq:alpha_cone_3} of that proposition. In particular, $\mop(M)$ has one or two positive edges that contain $a$. 
\end{remark}

Here is the announced generalization for $\U(2)$ of Woodward's K\"ahlerizability criterion \cite[Theorem 8.8]{woodward-spherical}. Its formal proof will be given on page ~\pageref{proof_prop_invcompl}. 
\begin{proposition} \label{prop:invcompl}
Suppose $M$ is a multiplicity free $\U(2)$-manifold with trivial principal isotropy group. Then $M$ admits a $\U(2)$-invariant compatible complex structure if and only if the following property holds: if the momentum polytope $\mop(M)$ of $M$ has exactly one vertex on the wall of $\wc$, then every positive edge of $\mop(M)$ contains that vertex.  
\end{proposition}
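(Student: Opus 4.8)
The plan is to reduce the question first to the maximal torus $T$ and then to the combinatorics of the momentum polytope. The first step is to establish the auxiliary statement \cref{prop:atiyah}: a multiplicity free $\U(2)$-manifold $M$ with trivial principal isotropy group carries a $\U(2)$-invariant compatible complex structure if and only if it carries a $T$-invariant one. One implication is immediate, since a $\U(2)$-invariant compatible complex structure is automatically $T$-invariant; for the converse I would invoke the extension criterion of Tolman \cite{tolman-exnonkaehlertorusactions}, which produces a $K$-invariant compatible complex structure from a $T$-invariant one once the fixed-point and isotropy data are controlled. The output of this step should be a purely combinatorial criterion for $T$-Kählerizability phrased, as announced, in terms of the $T$-momentum polytope $\mu_T(M)$ and the images under $\mu_T$ of the $T$-fixed points of $M$.

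Next I would assemble the combinatorial data. By \cref{prop:convexhullweylorbit} we have $\mu_T(M) = \conv(\mop(M) \cup s_{\alpha}(\mop(M)))$, a polytope symmetric across the wall $\{\<\alpha^{\vee},\cdot\> = 0\}$, and by \cref{rem:fixpoints} the $T$-fixed points and their $\mu_T$-images are determined by the vertices of $\mop(M)$ and their types in \cref{prop:mom_polytopes}: an interior vertex $a$ contributes the pair $a, s_{\alpha}(a)$; a vertex of type \eqref{eq:cone_1} or \eqref{eq:cone_2}, where $\mop(M)$ meets the wall along an edge, contributes one $\U(2)$-fixed point over $a$; a vertex of type \eqref{eq:alpha_cone_1} or \eqref{eq:alpha_cone_2} contributes two fixed points over $a$; and a vertex of type \eqref{eq:alpha_cone_3} contributes none. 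Since $\mop(M)$ is convex and contained in $\wc$, its intersection with the wall is empty, a single vertex, or an edge, giving exactly three configurations. I would also record that the positive edges of \cref{def:positive_face} are precisely the boundary edges of $\mop(M)$ facing the wall, i.e. the ``lower'' chain of $\mop(M)$.

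The core of the argument is to match the $T$-criterion with the positive-edge condition in each configuration. When $\mop(M)$ misses the wall or meets it in an edge, the hypothesis of the stated condition is vacuous and I would verify directly that the $T$-criterion always holds, using the reflection symmetry of $\mu_T(M)$ and, in the edge case, the $\U(2)$-fixed points supplied by \cref{rem:fixpoints}. The decisive case is when $\mop(M)$ touches the wall in exactly one vertex $a_0$, necessarily of type \eqref{eq:alpha_cone_1}, \eqref{eq:alpha_cone_2}, or \eqref{eq:alpha_cone_3}. Here a positive edge not containing $a_0$ produces, in the lower chain away from $a_0$, a $T$-fixed point (or reflected pair) whose $\mu_T$-image violates the compatibility imposed by the $T$-criterion, obstructing Kählerizability in the manner of Tolman's example; conversely, if every positive edge contains $a_0$, the lower chain consists of the at most two edges at $a_0$ and I would build the required structure. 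To handle type \eqref{eq:alpha_cone_3} I would invoke Woodward's criterion \cite[Theorem 8.8]{woodward-spherical}, which applies exactly there, and then transfer it to types \eqref{eq:alpha_cone_1} and \eqref{eq:alpha_cone_2} by means of the non-abelian symplectic cutting of Martens and Thaddeus \cite{martens-thaddeus-nonabsc}, relating these local models to the type \eqref{eq:alpha_cone_3} situation while keeping track of Kählerizability.

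The main obstacle I anticipate is precisely the heart of the third paragraph: extracting from Tolman's extension criterion a workable, explicit combinatorial characterization of $T$-Kählerizability, and then verifying that failure of the positive-edge condition is exactly what violates it. In particular one must resist the naive guess that every $T$-fixed point should map to a vertex of $\mu_T(M)$ --- this already fails in the wall-disjoint, yet Kähler, configuration --- so the obstruction must be located more delicately in the interaction between the single wall vertex and the lower edges away from it, which is where the symplectic-cutting comparison with Woodward's criterion does the real work.
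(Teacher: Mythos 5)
Your plan contains a genuine logical gap, and it sits exactly at its foundation: you use Tolman's extension criterion in a direction in which it does not work. Tolman's result (\cref{thm:extcrit}, \cite[Theorem 3.3]{tolman-exnonkaehlertorusactions}) is purely an \emph{obstruction}: if the x-ray fails the extension criterion, then no $T$-invariant compatible complex structure exists. It does not ``produce'' any complex structure, let alone upgrade a $T$-invariant one to a $K$-invariant one, so your proposed proof of the converse implication in \cref{prop:atiyah} (``$T$-invariant $\Rightarrow$ $\U(2)$-invariant via the extension criterion'') cannot work as stated. The same error recurs when you handle the configurations where $\mop(M)$ misses the wall or meets it in an edge: there you propose to ``verify that the $T$-criterion always holds,'' but satisfying a necessary condition proves nothing about existence. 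In the paper these cases are handled \emph{constructively} (\cref{lem:nosvonwallkaehler}): one starts from an explicit K\"ahler model ($\C^3$ with a suitable $\U(2)$-representation) carrying a second, commuting, \emph{global} Hamiltonian $T$-action whose momentum map agrees with the invariant momentum map, and performs abelian symplectic cuts along that global action; it is the globality (the action extends to $T^{\C}$) that keeps the result K\"ahler, which is precisely what fails in Woodward's counterexample. Likewise, in the one-wall-vertex case the actual construction of the $\U(2)$-invariant structure (\cref{lem:not_too_many_pos_invcompl}) is not a ``transfer'' between vertex types: for all three types \eqref{eq:alpha_cone_1}, \eqref{eq:alpha_cone_2}, \eqref{eq:alpha_cone_3} one applies Martens--Thaddeus non-abelian cutting \cite[Corollary 4]{martens-thaddeus-nonabsc} to the affine local model from \cref{table_gl2_pointed_wl}, realizing $M$ as a symplectic reduction of a K\"ahler manifold by intersecting with an \emph{outward-positive} polyhedral set $\mathsf{Q}$ --- and outward-positivity of $\mathsf{Q}$ is exactly where the hypothesis ``every positive edge contains the wall vertex'' enters. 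In \cref{prop:atiyah} the equivalence of $T$- and $\U(2)$-K\"ahlerizability is then obtained as a \emph{corollary} of the cycle $(\ref{atiyah4})\Rightarrow(\ref{atiyah5})\Rightarrow(\ref{atiyah3})\Rightarrow(\ref{atiyah1})\Rightarrow(\ref{atiyah4})$, not as an input to it.

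The part of your plan that is sound is the obstruction direction: if a positive edge misses the wall vertex, one exhibits a compatible strictly convex cone in the x-ray (built from the images of $T$-fixpoints and the reflected vertices $v_j'$) that admits no extension to a compatible convex polytope, so by Tolman there is no $T$-invariant, hence no $\U(2)$-invariant, compatible complex structure. This matches the paper's implication $(\ref{atiyah1})\Rightarrow(\ref{atiyah4})$ (in contrapositive form), and your warning about not expecting all $T$-fixpoints to map to vertices of $\mop_T$ is apt. But to repair the proof you must replace the Tolman-based ``construction'' by genuine constructions: symplectic cuts along a global auxiliary torus action when the polytope does not meet the wall in exactly one point, and non-abelian symplectic cutting of the affine local model against an outward-positive polyhedral set when it does.
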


\begin{remark}
With a straightforward adaptation of the proof of Proposition 7.27 and Corollary 7.28 in \cite{mp} \cref{prop:invcompl} can also be proved by applying Theorem 7.16 of \loccit, which gives a (rather technical) general criterion for the existence of an invariant compatible complex structure on a multiplicity free manifold, using the combinatorial theory of spherical varieties. 
\end{remark}

\begin{example}[Woodward] \label{ex:woodward}
In \cite{woodward-kaehler}, Woodward showed that the multiplicity free $\U(2)$-manifold $M$ with momentum polytope 
\[\mop(M) = \conv(0, \eps_1, -\eps_2, 3\eps_1 - \eps_2)\]
is not K\"ahlerizable. This fact can be deduced immediately from \cref{prop:invcompl}: the edge of $\mop(M)$ connecting the vertices $\eps_1$ and $3\eps_1 - \eps_2$ is positive, but does not contain the vertex $0$ of $\mop(M)$ that lies on the wall of the Weyl chamber. 
A picture of $\mop(M)$ can be found on page~\pageref{Woodward}: it is the trapezoid with vertices $0, v_1, v_2$ and $v_3$ on the right in \cref{Woodward}.
Similarly, the multiplicity free $\U(2)$-manifold with momentum polytope
\[\conv(0, \varepsilon_1, \alpha, 3\varepsilon_1 - \varepsilon_2)\]
is not K\"ahlerizable, because the edge connecting $\varepsilon_1$ and $3\varepsilon_1 - \varepsilon_2$ is positive and does not contain the vertex $0$. This polytope is the trapezoid with vertices $v_0, v_1, v_2$ and $v_3$ on the right in \cref{fig:morexrays}. This kind of polytope was not covered by the criterion in \cite{woodward-kaehler}.
\end{example}

The following lemma establishes a first part of \cref{prop:invcompl}.
\begin{lemma} \label{lem:nosvonwallkaehler}
If $M$ is a multiplicity free $\U(2)$-manifold whose momentum polytope $\mop$ does not have exactly one vertex on the wall of the Weyl chamber $\ft_+$, then $M$ admits a $\U(2)$-invariant complex structure. 
\end{lemma}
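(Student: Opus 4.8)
The plan is to split into the two configurations permitted by the hypothesis and, in each of them, to realise $M$ as a smooth projective $\GL(2)$-variety carrying a $\U(2)$-invariant K\"ahler form with momentum polytope exactly $\mop$; K\"ahlerizability of $M$ then follows from the uniqueness statement \cref{thm:knop_classif_mf}(\ref{thm:knop_classif_mf_u_item}), which supplies a $\U(2)$-equivariant symplectomorphism of $M$ onto the model and lets us pull back the model's compatible complex structure. First I would set up the case distinction. By \cref{prop:mom_polytopes}(\ref{item:mom_poly_1}) the polytope $\mop$ is $2$-dimensional and lies in the half-plane $\wc = \{\<\alpha^{\vee},\cdot\>\ge 0\}$ whose boundary is the wall $\R\om_2$ with $\om_2 = \eps_1+\eps_2$. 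Hence the linear functional $\<\alpha^{\vee},\cdot\>$ attains its minimum on $\mop$ along the face $\mop\cap(\text{wall})$, which is therefore empty, a single vertex, or an edge. Ruling out the case of exactly one vertex on the wall leaves two possibilities: (A) $\mop$ lies in the open Weyl chamber $\operatorname{int}\wc$, and (B) $\mop\cap(\text{wall})$ is an edge $F_0$ joining two vertices on the wall.

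In case (A) I would invoke the symplectic cross-section theorem of Guillemin--Sternberg. Since $\mu(M) = \U(2)\cdot\mop$ is contained in the regular set $\U(2)\cdot\operatorname{int}\wc$, the cross-section $Y := \mu^{-1}(\operatorname{int}\wc)$ is a compact symplectic $T$-submanifold with $M\cong \hfb{\U(2)}{T}{Y}$ as Hamiltonian $\U(2)$-manifolds, and $Y$ is a symplectic toric manifold with momentum polytope $\mop$; the Delzant (smoothness) condition at each vertex holds by \cref{prop:mom_polytopes}(\ref{item:mom_poly_3}). By Delzant's theorem \cite{delzant-abel} the manifold $Y$ is $T$-equivariantly symplectomorphic to a smooth projective toric variety $Z$ with a $\TC$-invariant compatible complex structure. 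Extending the $\TC$-action on $Z$ to $B^-$ by letting the unipotent radical act trivially, the bundle $\hfb{\GL(2)}{B^-}{Z}$ is a smooth projective variety (a locally trivial $Z$-bundle over $\GL(2)/B^- \cong \PP^1$), hence K\"ahler, and by \cref{prop:hfb_comparison} it is $\U(2)$-equivariantly diffeomorphic to $\hfb{\U(2)}{T}{Z}\cong M$. Averaging produces a $\U(2)$-invariant K\"ahler form, and it remains to choose its ample, $\GL(2)$-linearised class so that the associated momentum polytope is exactly $\mop$, after which \cref{thm:knop_classif_mf}(\ref{thm:knop_classif_mf_u_item}) transfers the structure to $M$.

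In case (B) the two endpoints of $F_0$ are vertices of the types \eqref{eq:cone_1} and \eqref{eq:cone_2} of \cref{prop:mom_polytopes}(\ref{item:mom_poly_4}), whose local models in \cref{table_gl2_pointed_wl} (Case $11$) are linear $\GL(2)$-representations, so their projectivisations are projective spaces. Here I would reflect $\mop$ across the wall: by \cref{prop:convexhullweylorbit} the $T$-momentum polytope $\conv(\mop\cup s_{\alpha}\mop)$ is the $s_{\alpha}$-symmetric polytope obtained by gluing $\mop$ to its mirror image along $F_0$, and I would realise the corresponding K\"ahler $\U(2)$-manifold as a smooth projective spherical $\GL(2)$-variety (a bundle of projective spaces over the toric data), using non-abelian symplectic cutting in the sense of Martens--Thaddeus \cite{martens-thaddeus-nonabsc} to pass between this symmetric toric picture and the multiplicity free $\U(2)$-manifold lying over $\mop$. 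As in case (A), \cref{thm:knop_classif_mf}(\ref{thm:knop_classif_mf_u_item}) then transfers the invariant compatible complex structure to $M$.

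The main obstacle in both cases is the momentum matching: producing an ample $\GL(2)$-linearised K\"ahler class on the projective model whose $\U(2)$-momentum polytope is precisely $\mop$, and checking that the model is multiplicity free with trivial principal isotropy group. In case (A) this is a comparatively direct computation using the projective-bundle structure over $\PP^1$, whereas in case (B) the edge-on-the-wall geometry is genuinely non-abelian, and it is exactly there that the Martens--Thaddeus cutting is needed in order to control the shape of the resulting momentum polytope.
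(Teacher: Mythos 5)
Your high-level strategy---produce an invariant K\"ahler model whose momentum polytope is exactly $\mop$ and then transport the complex structure through the equivariant symplectomorphism supplied by \cref{thm:knop_classif_mf}(\ref{thm:knop_classif_mf_u_item})---agrees with the paper's, but your case (B) contains a genuine gap, and it sits exactly where the real difficulty of the lemma lies. The K\"ahler-preservation result you invoke, \cite[Corollary 4]{martens-thaddeus-nonabsc}, applies only to cuts along \emph{outward-positive} polyhedral sets, i.e.\ every cutting normal $\mathsf{n}$ must satisfy $\<\alpha^{\vee},\mathsf{n}\>\le 0$; this hypothesis is precisely what makes \cref{lem:not_too_many_pos_invcompl} work in the one-wall-vertex case. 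A polytope with an entire edge on the wall can perfectly well have positive edges disjoint from the wall: the quadrilateral $\conv(0,\ \eps_1+\eps_2,\ 3\eps_1+2\eps_2,\ 4\eps_1+2\eps_2)$ satisfies all conditions of \cref{prop:mom_polytopes} (its wall vertices are of types \eqref{eq:cone_1} and \eqref{eq:cone_2} with $k=1$), yet its edge $[3\eps_1+2\eps_2,\,4\eps_1+2\eps_2]$ has inward normal $-\eps_2$ with $\<\alpha^{\vee},-\eps_2\>=1>0$ and contains no wall vertex. Whatever K\"ahler model you start from (the local model at either wall vertex, or your ``symmetric toric picture''), carving out that edge requires a cut that is not outward-positive, so Martens--Thaddeus gives no K\"ahler conclusion there; uncontrolled cuts of this kind are exactly the mechanism behind Woodward's non-K\"ahler example. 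The paper's proof gets around this with a different device: it starts from the linear K\"ahler model $M_1=\C^3$ (Case 11 of \cref{table_gl2_pointed_wl} with $\ell=-1$), which carries a second, \emph{global} Hamiltonian $T$-action commuting with the $\U(2)$-action and whose momentum map equals the invariant momentum map $\Psi$, and then performs ordinary abelian symplectic cuts with respect to circle subgroups of this auxiliary torus; these preserve K\"ahlerness regardless of the positivity of the edges being cut, because the circle actions are global and extend to their complexifications (\cref{rem:resultkaehler}). Note also that your parenthetical description of the case (B) manifold as ``a bundle of projective spaces over the toric data'' cannot be correct: by \cref{rem:fixpoints}, a wall vertex of type \eqref{eq:cone_1} or \eqref{eq:cone_2} forces a $\U(2)$-fixed point, and no manifold of the form $\hfb{\U(2)}{T}{F}$ has one.

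Your case (A) is a genuinely different (and in outline viable) route---principal cross-section, Delzant's theorem, then the algebraic bundle $\hfb{\GL(2)}{B^-}{Z}$---but the momentum matching cannot be done with ``ample $\GL(2)$-linearised classes'' alone. Condition (\ref{item:mom_poly_2}) of \cref{prop:mom_polytopes} only forces the edge \emph{directions} of $\mop$ to be rational, so $\mop$ may for instance be a $1\times\sqrt{2}$ axis-parallel rectangle placed in the interior of $\wc$; such a polytope is not a positive real multiple of a lattice polytope up to translation, hence is not the momentum polytope attached to any ample class, even after rescaling the form and applying the central shifts $r(-\eps_2)+s(\eps_1+\eps_2)$. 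To repair this you would need to work with non-integral invariant K\"ahler classes and show their momentum polytopes exhaust all admissible polytopes---an argument your plan does not contain, and which the paper's construction sidesteps because symplectic cuts can be performed at arbitrary real levels.
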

\begin{proof}
Our strategy is inspired by \cite[\S 3]{woodward-kaehler}  and uses E.~Lerman's \emph{symplectic cutting} \cite{lerman-sc}; see also \cite{Ler_et_al_nonabconvex}. We start with a certain multiplicity free (non-compact) $\U(2)$-manifold $M_1$ admitting a second Hamiltonian action of the maximal torus $T$ of $\U(2)$ that commutes with the $\U(2)$-action and such that $\phi(m) = \Psi(m)$ for all $m \in M$, where $\phi: M_1 \to \ft^*$ is the momentum map of the second $T$-action. We then perform a sequence of symplectic cuts
(respecting the actions of both $\U(2)$ and $T$) until the momentum polytope has the desired shape $\mop$. Because $\mop$ is of Delzant type (by \cref{prop:mom_polytopes}), it can be obtained from $\phi(M_1) = \Psi(M_1)$ by a finite sequence of cuts along hyperplanes such that, at each stage, the corresponding symplectic cut yields a smooth manifold.  

We first assume that an entire edge of the momentum polytope $\mop$
lies on the Weyl wall. Let $\sv$ be one of the two vertices of $\mop$ on the Weyl wall and suppose that 
\[\cone(\mop - \sv) = \cone(-(\eps_1 +\eps_2), \eps_1+k(\eps_1+\eps_2)) \ \text{for some } k\in \Z.\] 
We set $M_1=\C^3$ and equip it with the $\U(2)$-action given by
\[
g\cdot ((z_1,z_2),z_3):=((\det(g)^{-(k+1)} \cdot g)\cdot (z_1,z_2), \det(g)\cdot z_3)
\]
(this is precisely the action in the first row of \cref{table_gl2_pointed_wl} for $\ell=-1$)
and the standard Hamiltonian $\U(2)$-structure as representation of $\U(2)$; see e.g.\ \cite[Example 2.1]{sjamaar-convexreex}.
The explicit expression for the invariant momentum map $\Psi$ is then
\begin{align*}
 \Psi:M_1 \rightarrow \ft_+, \quad \Psi(z_1,z_2,z_3)=\frac{\pi}{2} (|z_1|^2+|z_2|^2) 
((k+1)\eps_1+k\eps_2)- \frac{\pi}{2} |z_3|^2 (\eps_1+\eps_2).
\end{align*}
Indeed, the restriction of the momentum map of $M_1$ to the cross-section $0 \oplus \C\oplus \C$  takes values in $\ft^*$ and is thus given by the momentum map
of the $T$-action on $0 \oplus \C \oplus \C$, which is
\begin{align*}
	(0,z_2,z_3)\mapsto \frac{\pi}{2} |z_2|^2 ((k+1)\eps_1+k\eps_2)- \frac{\pi}{2} |z_3|^2 (\eps_1+\eps_2).
\end{align*}
As $\Psi$ is constant on $\U(2)$-orbits, it now follows that it is
of the asserted shape.

We also equip $M_1$ with the following additional action of $T$:
\begin{align*}
(t_1,t_2)\cdot (z_1,z_2,z_3)=(t_2 (t_1t_2)^{-(k+1)} z_1,t_2 (t_1t_2)^{-(k+1)} z_2,t_1 t_2 z_3). 
\end{align*}
This action is  effective and commutes with the $\U(2)$-action. More importantly,
it is Hamiltonian with momentum map $\phi$ equal to
$\Psi$. We can use $\phi$ to perform the aforementioned sequence of symplectic 
cuts until the momentum image of $\phi$ is equal to $\Pc$. Since $\phi$
and $\Psi$ coincide on $M_1$, they will coincide after every cut. 
By the uniqueness part of Knop's \cref{thm:knop_classif_mf}, the $U(2)$-manifold obtained at the end of this process is $M$.
Due to basic properties of the symplectic cut,
the manifold is still K\"ahler.

If $\mop$ lies in the interior of the Weyl chamber, then we can still start with (for example) $M_1$
(for some suitable choice of the parameter $k$) and we can again cut $\Psi(M_1) = \phi(M_1)$ until we reach $\mop$.
\end{proof}
\begin{remark} \label{rem:resultkaehler}
	The reason that the resulting manifold after all the symplectic cuts in the proof of \cref{lem:nosvonwallkaehler} is K\"ahler, is that the cuts are made with respect to circle subgroups of $T$ and that action of $T$ on $M$ with momentum map $\phi$ is a \emph{global} action which extends to an action of the complexification $T^{\C}$ of $T$.
	In Woodward's non-K\"ahlerizable example \cite[\S 3]{woodward-kaehler}, only a local circle action was used to perform the cut; in general
   one does not obtain a compatible K\"ahler structure after such a cut.
\end{remark}
\begin{remark}
The K\"ahler structures constructed in the proof of \cref{lem:nosvonwallkaehler} are
in fact toric K\"ahler structures for a torus of rank $3$. Indeed, the constructed manifold carries an induced $T\times T$-action after every cut. Its kernel  
always has dimension $1$, which means that it descends to a multiplicity free 
action of a torus of rank $3$.
\end{remark}

We now use \cite[Corollary 4]{martens-thaddeus-nonabsc} to establish the next part of \cref{prop:invcompl}.
\begin{lemma} \label{lem:not_too_many_pos_invcompl}
Let $M$ be a multiplicity free $\U(2)$-manifold with trivial principal isotropy group. Suppose that the momentum polytope $\mop$ of $M$ has exactly one vertex $a$ on the wall of $\wc$. If every positive edge of $\mop$ contains the vertex $a$, then $M$ admits a $\U(2)$-invariant compatible complex structure. 
\end{lemma}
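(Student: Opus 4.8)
The plan is to reduce everything to Woodward's criterion \cite[Theorem 8.8]{woodward-spherical}, which already settles the case in which the cone $\Rp(\mop-a)$ at the wall vertex $a$ is spanned by the vectors in \eqref{eq:alpha_cone_3}, and to dispose of the two remaining cone types \eqref{eq:alpha_cone_1} and \eqref{eq:alpha_cone_2} by non-abelian symplectic cutting. First I would invoke the remark preceding the lemma: since $\mop$ has exactly one vertex $a$ on the wall, $\Rp(\mop-a)$ is the cone spanned by one of the sets \eqref{eq:alpha_cone_1}, \eqref{eq:alpha_cone_2}, \eqref{eq:alpha_cone_3}. Working with the $W$-invariant inner product, for which $\eps_1,\eps_2$ are orthonormal and $\<\alpha^\vee,\lambda\>=\langle\alpha,\lambda\rangle$, a short computation shows that the edge of $\mop$ at $a$ in the direction $\alpha$ --- present in cases \eqref{eq:alpha_cone_1} and \eqref{eq:alpha_cone_2} --- has an inward normal parallel to $\eps_1+\eps_2$, hence lying along the wall, so this edge is \emph{not} positive, whereas the remaining edges of $\mop$ at $a$ are positive. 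In case \eqref{eq:alpha_cone_3} both edges at $a$ are positive, so the hypothesis that every positive edge of $\mop$ contains $a$ is exactly the hypothesis of Woodward's criterion, and \cite[Theorem 8.8]{woodward-spherical} produces a $\U(2)$-invariant compatible complex structure. It then remains to treat \eqref{eq:alpha_cone_1} and \eqref{eq:alpha_cone_2}, which are interchanged by the reflection $-s_\alpha$ fixing $\alpha$ and negating $\eps_1+\eps_2$, so I would treat only \eqref{eq:alpha_cone_1}.

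The geometric observation that drives the rest is that the ray $\Rp\alpha$ bisects the cone spanned by \eqref{eq:alpha_cone_3}, splitting it into the two cones spanned by \eqref{eq:alpha_cone_1} and \eqref{eq:alpha_cone_2}; moreover $\Rp\alpha$ lies on the hyperplane through $a$ on which $\eps_1+\eps_2$ is constant, the hyperplane dual to the central circle $Z\subset\U(2)$. Accordingly I would enlarge $\mop$ across its $\alpha$-edge to a polytope $\widehat\mop$ whose cone at $a$ is spanned by \eqref{eq:alpha_cone_3} and which satisfies $\mop=\widehat\mop\cap\{\,\lambda:\langle\eps_1+\eps_2,\lambda\rangle\ge\langle\eps_1+\eps_2,a\rangle\,\}$. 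Using that $-s_\alpha$ preserves both $\wl$ and the pairing $\<\alpha^\vee,\cdot\>$, one checks that the enlargement can be arranged so that $\widehat\mop$ satisfies all the conditions of \cref{prop:mom_polytopes} and has all of its positive edges passing through $a$; here the hypothesis on $\mop$ enters, as it forces every edge of $\mop$ away from $a$ to be non-positive, and these reflect to non-positive edges of $\widehat\mop$. By the case \eqref{eq:alpha_cone_3} already handled, the multiplicity free $\U(2)$-manifold $\widehat M$ with $\mop(\widehat M)=\widehat\mop$ then carries a $\U(2)$-invariant compatible complex structure.

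Finally I would realize $M$ as the non-abelian symplectic cut of $\widehat M$ along the half-space bounded by the bisector $\Rp\alpha$ and apply \cite[Corollary 4]{martens-thaddeus-nonabsc} to conclude that the cut inherits a compatible Kähler structure. Since the cut reproduces the momentum polytope $\mop$, the uniqueness statement in \cref{thm:knop_classif_mf} identifies the result with $M$, whence $M$ is Kähler.

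The main obstacle is precisely this last step. The cutting hyperplane runs through the wall vertex $a$, where the $\U(2)$-isotropy is non-abelian, and through the opposite end $b$ of the $\alpha$-edge; because $\langle\eps_1+\eps_2,\alpha\rangle=0$, the point $b$ sits at the same central level as $a$ and is an interior vertex of $\widehat\mop$, so by \cref{thm:sjamaar_mp} it is the image of a $T$-fixed point of $\widehat M$. An ordinary Lerman cut by the central circle $Z$ would therefore pass through a $Z$-fixed point and be singular, and near $a$ it would have to cut across a non-abelian orbit. The point of \cite{martens-thaddeus-nonabsc} is to carry out exactly this cut intrinsically, without a global circle action and compatibly with the Kähler structure; verifying that the non-abelian cut of $\widehat M$ is smooth, is performed $\U(2)$-equivariantly, and has momentum polytope $\mop$ is the technical heart of the argument, while the construction of a valid $\widehat\mop$ (in particular its convexity at $b$ and the Delzant condition at the new interior vertices) is the accompanying combinatorial bookkeeping.
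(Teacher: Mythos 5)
Your positivity computations at the wall vertex are correct, and handling case \eqref{eq:alpha_cone_3} via Woodward's criterion is consistent with the paper's framing (the paper itself instead treats all three cone types uniformly, cutting the \emph{non-compact} K\"ahler local model $M_1$ from \cref{table_gl2_pointed_wl}, which satisfies $\Psi(M_1)=a+\cone(\mop-a)$, along the edges of $\mop$ not containing $a$; the hypothesis says exactly that these edges are non-positive, i.e.\ that the polyhedral set they define is outward-positive in the sense of \cite{martens-thaddeus-nonabsc}). The fatal problem with your route for cases \eqref{eq:alpha_cone_1}--\eqref{eq:alpha_cone_2} is the last step. Since the normal $\eps_1+\eps_2$ of your cutting half-space is central (Weyl-invariant), your ``non-abelian'' cut is nothing but Lerman's cut with respect to the central circle $Z\subset \U(2)$, a global action commuting with $\U(2)$; \cite{martens-thaddeus-nonabsc} adds nothing here, because outward-positivity is about equivariance and smoothness at the Weyl wall, not about fixed points in the interior of the chamber. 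And this central cut is singular: as you yourself state, $b$ is a vertex of $\widehat{\mop}$ in the interior of $\wc$, so by \cref{thm:sjamaar_mp} the fiber $\Psi^{-1}(b)\cong \U(2)/T$ consists of $Z$-fixed points (every conjugate of $T$ contains the center), these lie on the cutting level, and the $Z$-weights on the symplectic slice there have opposite signs because the two edges of $\widehat{\mop}$ at $b$ point into opposite sides of the cutting line. So the cutting level is a critical level, $Z$ does not act freely on it, and the reduced structure degenerates along $\Psi^{-1}(b)$: locally one is reducing $\C_{k_1}\oplus\C_{-k_2}$ (with $k_1,k_2>0$) at the transition value between a blow-up and a blow-down. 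The cut is therefore not a smooth multiplicity free Hamiltonian manifold, Knop's uniqueness (\cref{thm:knop_classif_mf}) cannot identify it with $M$, and no K\"ahler structure on $M$ results. (A repair would require arranging that $b$ is \emph{not} a vertex of $\widehat{\mop}$, e.g.\ that the edge of $\mop$ ending at $b$ continues across the cutting line as an edge of $\widehat{\mop}$; then $Z$ acts freely on the level and the cut can be smooth --- but this is precisely what your construction excludes.)

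The enlargement step is also not mere bookkeeping, because the reflection you propose fails in general: the union $\mop\cup\bigl(a+(-s_\alpha)(\mop-a)\bigr)$ need not be convex. Concretely, for $\mop=\conv(0,\alpha,3\eps_1-2\eps_2)$ (case \eqref{eq:alpha_cone_1} with $j=2$, a triangle from \cref{prop:triangles}), the reflected third vertex is $2\eps_1-3\eps_2$ and one has $\alpha=\tfrac15(3\eps_1-2\eps_2)+\tfrac15(2\eps_1-3\eps_2)$, so $\alpha$ lies in the interior of $\conv(0,\,3\eps_1-2\eps_2,\,2\eps_1-3\eps_2)$ and the union is a non-convex quadrilateral; taking the convex hull instead destroys the identity $\mop=\widehat{\mop}\cap\{\<\alpha^\vee,\cdot\>\text{-independent half-space}\}$ you need. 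Moreover no \emph{triangle} $\widehat{\mop}$ can work when $j>0$, since the argument in the proof of \cref{prop:triangles} shows a triangle with a type-\eqref{eq:alpha_cone_3} wall vertex forces $j=0$; a valid $\widehat{\mop}$ must acquire extra vertices below the cutting line, chosen to satisfy the Delzant condition, non-positivity of the new edges, and the non-vertex condition at $b$ mentioned above. Whether this completion exists in general requires a genuine (toric-resolution-type) argument that the proposal does not supply. The paper's proof avoids both issues at once: its cutting hyperplanes are facets of the Delzant-type polytope $\mop$ itself, and no compact intermediate manifold $\widehat M$ is ever needed.
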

\begin{proof}
It follows from \cref{prop:mom_polytopes} that $\{\rho_1^a, \rho_2^a\}$ is one of the sets in \eqref{eq:alpha_cone_1}, \eqref{eq:alpha_cone_2} or \eqref{eq:alpha_cone_3}. Let $M_1$ be the corresponding smooth affine spherical $\GL(2)$-variety in \cref{table_gl2_pointed_wl}, that is, $M_1 = \GL(2) \times _{\TC} \C_{-(j\alpha+ \varepsilon_1)}$ when $\{\rho_1^a, \rho_2^a\}$ is the set in \eqref{eq:alpha_cone_1}, $M_1 = \GL(2) \times _{\TC} \C_{-(j\alpha- \varepsilon_2)} $ when $\{\rho_1^a, \rho_2^a\}$ is the set in \eqref{eq:alpha_cone_2} and $M_1 = \GL(2)\left/\left\lbrace \begin{pmatrix} z^j & 0 \\ 0 & z^{j+1}\end{pmatrix}: z \in \C^{\times}\right\rbrace \right.$ when $\{\rho_1^a, \rho_2^a\}$ is the set in \eqref{eq:alpha_cone_3}. As in \cite[\S 4.1]{sjamaar-convexreex}, we view $M_1$ as a Hamiltonian $\U(2)$-manifold by embedding it into a unitary representation of $\U(2)$. Let $\Psi : M_1 \to \ft_+$ be the corresponding invariant momentum map. It follows from \cite[Theorem 4.9]{sjamaar-convexreex} that $\Psi(M_1) = \cone \{\rho_1^a, \rho_2^a\}$. By translating the momentum map of $M_1$ by $a$, we ensure that $\Psi(M_1) = a + \cone \{\rho_1^a, \rho_2^a\} = a + \cone (\mop -a)$, in other words, that $\Psi(M_1)$ is equal to $\mop$ in a neighborhood of $a$. One can now apply non-abelian symplectic cutting to $M_1$ to obtain a multiplicity free $\U(2)$-manifold with momentum polytope $\mop$. By Knop's uniqueness result in \cref{thm:knop_classif_mf}, this multiplicity free manifold has to be $M$. Because, as mentioned in \cref{rem:resultkaehler}, non-abelian symplectic cutting is a local construction which cannot be realized by ``global'' symplectic reduction, this does not yet guarantee that $M$ is K\"ahler, even though $M_1$ was. 

Nevertheless, under the assumptions of the current lemma, \cite[Corollary 4]{martens-thaddeus-nonabsc} yields that $M$ can be constructed as the  symplectic reduction of a symplectic $\U(2)$-manifold $\widetilde{M}$, which is K\"ahler because $M_1$ is. It then follows that $M$ is K\"ahler by the general properties of symplectic reduction.  The key point which allows us to apply \loccit\ is that 
\begin{equation}
\label{eq:intersQ}
\mop = \Psi(M_1) \cap \mathsf{Q},
\end{equation}
where $\mathsf{Q}$ is an \emph{outward-positive} polyhedral set, using the terminology of \cite[Definition 3]{martens-thaddeus-nonabsc}. To describe $\mathsf{Q}$, let $\mathsf{n}_1, \mathsf{n_2}, \ldots, \mathsf{n}_r$ be inward-pointing normal vectors to the $r$ edges of $\mathcal{P}$ that do not contain the vertex $a$ of $\mop$ lying on the wall of $\wc$. By assumption
\begin{equation} \label{eq:outward_pos}
\<\alpha^\vee, \mathsf{n_i}\> \leq 0 \quad \text{for all $i \in \{1,2,\ldots,r\}$}.
\end{equation} 
Let $\eta_1, \eta_2, \ldots, \eta_r \in \R$ be such that, for each $i \in \{1,2,\ldots,r\}$, 
\[ \mop \cap \{v \in \wc: \<v,\mathsf{n}_i\> = \eta_i\}\]
is the edge of $\mop$ to which $\mathsf{n}_i$ is an inward-pointing normal. Now we set 
\[\mathsf{Q} = \{v \in \ft_+: \<v,\mathsf{n}_i\> \ge \eta_i\text{ for all $i \in \{1,2,\ldots,r\}$}\}.\]
Then \eqref{eq:intersQ} holds, and \eqref{eq:outward_pos} says precisely that $\mathsf{Q}$ is outward-positive. As explained in \cite[Remark 2]{martens-thaddeus-nonabsc}, one may need to (and can) impose some extra inequalities to make $\mathsf{Q}$ \emph{universal} in the parlance of \cite[Definition 2]{martens-thaddeus-nonabsc}. 
\end{proof}
\begin{remark}\label{rem:pos_edge}
In the situation of \cref{lem:not_too_many_pos_invcompl}, if $\mop$ has a positive edge not containing $a$, then it has one that is adjacent to an edge containing $a$. This follows from the
convexity of $\mop$.
\end{remark}
For the final step in the proof of \cref{prop:invcompl}, we will make use of work of S.~Tolman's. 
In \cite{tolman-exnonkaehlertorusactions}, she constructed an example of a Hamiltonian $T$-space of complexity one in dimension six that does not
admit a $T$-invariant compatible complex structure. She proved this by checking that her example does not satisfy a certain \emph{extension criterion}
and showing that this criterion is necessary  for a $T$-invariant compatible complex structure to exist. For the convenience of the reader,
we will recall this criterion here together with the definitions necessary to formulate it. The criterion applies to compact Hamiltonian $\U(1)^n$-manifolds for any $n \in \N$, but to avoid introducing additional notation, we will use $T$ for the acting torus, as this is the setting where we will apply it. We refer to \cite[\S\S 2,3]{tolman-exnonkaehlertorusactions} for details.

By the \textbf{x-ray} of $M$, we mean its orbit type stratification
\[
\mathcal{X}=\bigcup_{H \text{ subgroup of }T} \{\text{connected components of }M^H \}
\]
together with the convex polytopes that are the images of its elements
under the $T$-momentum map $\mu_T$. We say that a convex polytope $\Delta\subset \ft^*$ (resp. a strictly convex cone $C\subset \ft^*$) is \textbf{compatible} with this x-ray if
(there exists a neighborhood $U$ of the vertex of $C$ such that)
for each face $\sigma$ of $\Delta$ (resp. $C$), we can choose $X_{\sigma}\in \mathcal{X}$ such that 
\begin{align}
& \dim(\mu_T(X_{\sigma}))=\dim(\sigma),\\ 
& \sigma \subset \mu_T(X_{\sigma}) \text{ (resp.\  $\sigma \cap U \subset \mu_T(X_{\sigma})$), and }\\
& \text{$X_{\sigma} \subset X_{\sigma'}$ whenever $\sigma$ and $\sigma'$ are faces of $\Delta$ (resp. $C$) with $\sigma \subset \sigma'$.}
\end{align}
We say that $\Delta$ is an \textbf{extension} of $C$ when there exists a neighborhood $U$ of the vertex of $C$ with $C\cap U=\Delta \cap U$.
\begin{definition}
	An x-ray satisfies the \textbf{extension criterion} if every compatible strictly convex cone admits an extension to a compatible convex polytope.
\end{definition}
\begin{theorem}[{\cite[Theorem 3.3]{tolman-exnonkaehlertorusactions}}]
\label{thm:extcrit}
	Let $M$ be a Hamiltonian $T$-manifold that does not satisfy the extension criterion. Then $M$ does not admit a $T$-invariant compatible
	complex structure.
\end{theorem}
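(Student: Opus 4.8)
The plan is to prove the contrapositive: assuming $M$ carries a $T$-invariant compatible complex structure $J$, I would show that its x-ray satisfies the extension criterion, i.e.\ that every compatible strictly convex cone admits an extension to a compatible convex polytope. The starting point is that $(M,\om,J)$ is then a compact K\"ahler manifold on which $T$ acts by holomorphic isometries, so the $T$-action extends to a holomorphic action of the complexified torus $\TC=(\C^{\times})^{n}$, and for every $\xi\in\ft$ the gradient of $\mu_T^{\xi}$ with respect to the K\"ahler metric $\om(\cdot,J\cdot)$ equals $J\xi_M$. Two consequences would be used repeatedly. First, every connected component of a fixed-point set $M^{H}$, for a subtorus $H\le T$, is a $\TC$-invariant compact K\"ahler submanifold, so its $\mu_T$-image is again a convex polytope by Atiyah--Guillemin--Sternberg. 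Second, at any $T$-fixed point $p$ the local normal form identifies the germ at $\mu_T(p)$ of the $\mu_T$-image of such a submanifold with the cone $\mu_T(p)+\cone\Pi_p$ spanned by the weights $\Pi_p$ of the isotropy representation on the tangent space at $p$.

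The tool I would use to manufacture extensions is the closure of an orbit of the complexified torus. For $x\in M$ the orbit closure $\overline{\TC\cdot x}$ is a (possibly singular) projective toric variety whose image $\mu_T(\overline{\TC\cdot x})$ is a convex polytope, with vertices the images of the $T$-fixed points in the closure; the crucial point is that at each such vertex $\mu_T(p)$ the local cone of this polytope is \emph{exactly} the weight cone $\cone\Pi$ of the $\TC$-tangent directions at $p$, neither smaller nor larger. This two-sided control of the local cone is the rigidity that the K\"ahler hypothesis supplies and that a merely symplectic Hamiltonian $T$-manifold lacks.

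With these in hand, let $C$ be a compatible strictly convex cone, with vertex $w=\mu_T(X_{\{0\}})$, where $X_{\{0\}}\in\mathcal{X}$ realizes the $0$-dimensional face, and let $\rho_1,\ldots,\rho_k$ be the extremal rays of $C$, each realized by an edge stratum $X_{\rho_i}\in\mathcal{X}$ with $X_{\{0\}}\subset X_{\rho_i}$. Choosing $p\in X_{\{0\}}$ and picking $x$ near $p$ generically in the sum of the weight spaces in $T_pM$ cut out by the $X_{\rho_i}$, I would set $\Delta:=\mu_T(\overline{\TC\cdot x})$. By the previous paragraph the local cone of $\Delta$ at $w$ is exactly $\cone\{\rho_1,\ldots,\rho_k\}=C$, so $\Delta\cap U=C\cap U$ for a suitable neighborhood $U$ of $w$; thus $\Delta$ is an extension of $C$. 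Compatibility of $\Delta$ with the x-ray would then follow because each face of the toric polytope $\Delta$ is the $\mu_T$-image of a fixed-point stratum of the orbit closure, which is a connected component of some $M^{H'}$ and hence lies in $\mathcal{X}$, with the nesting of strata inherited from the face lattice of $\Delta$. As $C$ was arbitrary, the extension criterion holds.

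The hard part is the exact matching of local cones asserted above: one must ensure that the local cone of $\Delta$ at $w$ equals $C$ rather than some strictly larger pointed cone containing it, and this exactness is precisely what holomorphicity buys, its failure in the symplectic category being exactly what lets Tolman's non-K\"ahler example violate the criterion. Subordinate technical obstacles would be that $\overline{\TC\cdot x}$ can be singular (so one passes to a toric resolution or applies convexity for singular symplectic toric spaces), that $x$ must be chosen so that its orbit closure realizes \emph{all} of the prescribed edge directions $\rho_i$ simultaneously and no spurious extra ones, and that the prescribed lower strata $X_\sigma$ of the compatibility data are consistent with the strata occurring in $\overline{\TC\cdot x}$.
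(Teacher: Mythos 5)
The paper does not contain a proof of this statement at all: it is quoted as \cite[Theorem 3.3]{tolman-exnonkaehlertorusactions}, so the only proof to compare against is Tolman's original one. Your proposal is, in outline, exactly that argument --- use the K\"ahler structure to complexify the $T$-action, take the closure of a $T^{\C}$-orbit through a point chosen in the weight directions prescribed by the compatible cone, and invoke Atiyah's convexity theorem for $T^{\C}$-orbit closures (the image is the convex hull of the images of the fixed points in the closure, and holomorphic linearization at the fixed point gives the exact local cone) to produce the compatible extension --- so your approach is correct and essentially the same as the cited source's, with the technical issues you flag (singular orbit closures, exactness of the local cone) being precisely the ones Atiyah's theorem and Bochner linearization settle.
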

It is clear that a compatible convex cone of dimension one always admits an extension to a compatible convex polytope, so this criterion
only needs to be checked for compatible strictly convex cones of dimension at least two. On the other hand, since our $\mu_T$ takes values in a vector space of dimension 2,
we only need to check this condition for compatible strictly convex cones of dimension exactly two. The vertices of those have to be image of a $T$-fixed point $p$,
and the edges locally have to be images of weight spaces of the corresponding isotropy representation of $T$ at $p$. Consequently,
we can describe every such cone by giving two line segments starting at $\mu_T(p)$ that correspond to linearly independent weights of the isotropy representation at $p$,
and any such line segment can be described by a pair of points in $\mop_T = \mu_T(M)$, one of which is $\mu_T(p)$. We will use this identification throughout.

Combinatorially linking Tolman's criterion to the K\"ahlerizability criterion of \cref{prop:invcompl} we now also extend \cite[Theorem 9.1]{woodward-kaehler} of Woodward's about certain multiplicity free $\SO(5)$-manifolds to multiplicity free $\U(2)$-manifolds with trivial principal isotropy whose momentum polytope intersects the Weyl wall in one point. We also rephrase our K\"ahlerizability criterion in terms of the $T$-momentum polytope and the $T$-fixpoints. 
\begin{proposition} \label{prop:atiyah} 
Let $M$ be a multiplicity free $\U(2)$-manifold with trivial principal isotropy group whose momentum polytope $\mop$ intersects the Weyl wall
at exactly one point. Then the following are equivalent:
\begin{enumerate}[(1)]
\item \label{atiyah4} Every positive edge of $\mop$ contains the vertex $a$ of $\mop$ lying on the Weyl wall. 
	\item \label{atiyah5} $M$ admits a $\U(2)$-invariant compatible complex structure.
		\item \label{atiyah3} $M$ admits a $T$-invariant compatible complex structure.
	\item \label{atiyah1} The x-ray of $M$ satisfies the extension criterion.
	\item \label{atiyah2} The set $M^T$ is mapped to the boundary of $\mop_T = \mu_T(M)$ under the $T$-momentum map $\mu_T$.
\end{enumerate}
\end{proposition}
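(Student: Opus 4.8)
The plan is to establish all five equivalences through the single cyclic chain $(1)\Rightarrow(2)\Rightarrow(3)\Rightarrow(4)\Rightarrow(5)\Rightarrow(1)$, working throughout under the standing hypothesis that $a$ is the \emph{unique} vertex of $\mop$ on the Weyl wall, so that by the remark following \cref{def:positive_face} and by \cref{prop:mom_polytopes} the cone $\Rp(\mop-a)$ is spanned by one of the sets in \eqref{eq:alpha_cone_1}, \eqref{eq:alpha_cone_2}, \eqref{eq:alpha_cone_3}. The three ``soft'' implications are immediate: $(1)\Rightarrow(2)$ is exactly \cref{lem:not_too_many_pos_invcompl}; for $(2)\Rightarrow(3)$ I restrict a $\U(2)$-invariant compatible complex structure to the subgroup $T\inn\U(2)$, compatibility with $\om$ being unaffected; and $(3)\Rightarrow(4)$ is the contrapositive of Tolman's \cref{thm:extcrit} applied to $M$ viewed as a Hamiltonian $T$-manifold.

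Next I would prove $(4)\Rightarrow(5)$ in contrapositive form, showing that an interior $T$-fixpoint forces the x-ray to violate the extension criterion. Assume some $T$-fixpoint maps into the interior of $\mop_T=\conv(\mop\cup s_\alpha\mop)$ (cf.\ \cref{prop:convexhullweylorbit}). By \cref{rem:fixpoints} every $T$-fixpoint lies over $a$ or over an interior vertex; since a quick local computation shows $a\in\partial\mop_T$ in cases \eqref{eq:alpha_cone_1}, \eqref{eq:alpha_cone_2} (and there are no $T$-fixpoints over $a$ in case \eqref{eq:alpha_cone_3}), the offending fixpoint $p$ lies over an interior vertex, and after applying $s_\alpha$ we may assume $b:=\mu_T(p)$ is an interior vertex of $\mop$ with $b$ interior to $\mop_T$. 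By \cref{thm:sjamaar_mp}(a) we have $K_p=T$, and by \cref{thm:sjamaar_mp}(b) the two weights of the symplectic slice $N_p$ generate the cone $-\cone(\mop-b)$, while the remaining isotropy weight of $T_pM$ is a root direction coming from $T_p(\U(2)\cdot p)\cong T_p(\U(2)/T)$; interiority of $b$ means precisely that these three weights positively span $\ft^*$. I would then write down a strictly convex two-dimensional cone $C$ with vertex $b$ spanned by two adjacent weight directions: it is compatible with the x-ray because a neighborhood of $b$ is filled by the image of the open stratum and its two edges are covered by the invariant two-spheres in the corresponding weight directions. The crux is to show $C$ admits no compatible extension, by tracing these two-spheres to the adjacent $T$-fixpoints (whose images are the remaining vertices of $\mop_T$) and checking, for each of the three local models in \cref{table_gl2_pointed_wl}, that no compatible convex polytope can agree with $C$ near the interior point $b$.

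For the closing implication $(5)\Rightarrow(1)$ I argue the contrapositive. If (1) fails, there is a positive edge of $\mop$ not containing $a$, so by \cref{rem:pos_edge} there is a positive edge $F$ meeting an edge $[a,b]$ at a neighbor $b$ of $a$. A support-function computation, of which the analysis of \cref{ex:woodward} is the representative model, shows that the protrusion of $s_\alpha\mop$ beyond every supporting line of $\mop$ at $b$ is governed by the sign of $\<\alpha^{\vee},\mathsf{n}_F\>$; positivity of $F$ therefore places $b$ in the interior of $\mop_T$, and the $T$-fixpoint over $b$ then maps into the interior, violating (5). Together with the cycle this yields the equivalence of all five conditions.

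I expect the non-extendability step inside $(4)\Rightarrow(5)$ to be the main obstacle. Verifying it requires identifying which pairs of $T$-fixpoints are joined by invariant two-spheres (equivalently, the one-dimensional strata of the x-ray) and confirming that the cone $C$, opening into the interior of $\mop_T$, cannot be completed to a compatible polytope. This is exactly the type of obstruction exhibited by Tolman and by Woodward, and I would model the argument on their non-extendability computations, carried out case by case for the local models \eqref{eq:alpha_cone_1}, \eqref{eq:alpha_cone_2}, \eqref{eq:alpha_cone_3}.
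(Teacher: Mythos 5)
Your implications (1)$\Rightarrow$(2)$\Rightarrow$(3)$\Rightarrow$(4) coincide with the paper's (via \cref{lem:not_too_many_pos_invcompl}, restriction, and \cref{thm:extcrit}), and your contrapositive for (5)$\Rightarrow$(1) is essentially the paper's first paragraph; your variant (use \cref{rem:pos_edge} to place the positive edge $F$ next to an edge $[a,b]$, then show $b$ is interior to $\mop_T$) does work, because positivity of $F$ forces $[a,b]$ not to be parallel to $\alpha$, whence \emph{both} inward edge-normals at $b$ pair strictly positively with $\alpha^{\vee}$ and no supporting line of $\mop_T$ can pass through $b$. The genuine gap is the implication that closes the cycle through the extension criterion. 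The paper closes it as (4)$\Rightarrow$(1), and that step is where essentially all the work of the proof lies: determining the full one-skeleton of the x-ray in the two cases (type \eqref{eq:alpha_cone_1}/\eqref{eq:alpha_cone_2} versus type \eqref{eq:alpha_cone_3}), exhibiting one carefully chosen compatible cone, and proving non-extendability by a crossing-edges argument and a convexity violation. You replace this with (4)$\Rightarrow$(5) and explicitly defer its core (``the crux\dots'', ``the main obstacle\dots'', ``I would model the argument on\dots''), so the substantive content of the proposition is not actually proven.

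Moreover, the plan you sketch for that step would fail as stated. You propose to base the non-extendable compatible cone $C$ at the \emph{interior} vertex $b=\mu_T(p)$, spanned by two of the weight directions of $T_pM$. In case \eqref{eq:alpha_cone_1} (take the right-hand polytope of \cref{fig:morexrays}, $b=v_1$) \emph{every} such cone at $v_1$ admits a compatible extension: the cone along the two slice directions extends to $\mop$ itself, which is compatible here because $v_0$ is a fixpoint image and the $\alpha$-parallel edge $(v_0,v_3)$ is covered by the $4$-dimensional component over $(v_3,v_3')$; the cone towards $v_0$ and $v_1'$ extends to the triangle $\conv(v_0,v_1,v_1')$; and the cone towards $v_2$ and $v_1'$ extends to $\conv(v_1,v_2,v_2',v_1')$. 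In this case the non-extendable cone must be based at the \emph{wall} vertex $v_0$ and must use the reflected edge $(v_0,v_1')$, which is exactly what the paper does. Even in case \eqref{eq:alpha_cone_3}, where a cone at $b$ does work, only one of the three pairs of weight directions is non-extendable (the orbit direction $\pm\alpha$ together with the slice direction pointing at the wall, whose x-ray edge continues across the wall to $v_n'$); the other two pairs extend. So identifying the correct cone is the whole point, it depends on the case and on where the positive edge sits relative to $b$ --- and note that inside your cyclic scheme the equivalence (1)$\Leftrightarrow$(5) is not yet available when proving (4)$\Rightarrow$(5), so this combinatorial information must be rederived from the interior-fixpoint hypothesis (it can be: an interior vertex forces both of its edges to be positive), which your proposal also does not address.
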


\begin{proof}
First we show that (\ref{atiyah2}) implies (\ref{atiyah4}). If (\ref{atiyah4}) does not hold, then $\mop$ contains a positive edge that does not meet the wall of the Weyl chamber. This means that the two vertices $\mathsf{v}$ and $\mathsf{w}$ adjacent to this edge are the images under $\mu$ as well as under $\mu_T$ of $T$-fixed points in $M$, see \cref{thm:sjamaar_mp}(\ref{item_pol_vertex_interior}).  Call the vertex of $\mop$ on the wall of the Weyl chamber $v_0$, and set $\mathsf{v}'= s_{\alpha}(\mathsf{v})$,  $\mathsf{w}'= s_{\alpha}(\mathsf{w})$. Then it follows from \cref{prop:convexhullweylorbit} that the polytope 
\begin{equation} \label{eq:auxpol}
\conv(v_0,\sv,\sv', \mathsf{w}, \mathsf{w}')
\end{equation} is a subset of $\mop_T$.   Using that the edge $(\sv,\mathsf{w})$ of $\mop$ is positive, elementary geometric considerations show that $\mathsf{v}$ or $\mathsf{w}$ lies in the  interior of  the polytope \eqref{eq:auxpol}, and therefore not on the boundary of $\mop_T$.  This shows that (\ref{atiyah2}) does not hold.

We turn to the implication ``(\ref{atiyah4}) $\Rightarrow$ (\ref{atiyah2}).'' Let $m \in M^T$. First observe that $\mu(m)  \in (\fk^*)^T = \ft^*$ by the equivariance of $\mu$, and therefore that $\mu_T(m) = \mu(m)$.  Next, $\mu_T(m) \in \{\Psi(m), s_{\alpha}(\Psi(m))\}$ thanks to the well-known isomorphism $\fk^*/K \cong \ft^*/\{e,s_{\alpha}\}$ induced by the restriction map $\fk^* \to \ft^*$. Furthermore, $\Psi(m)$ is a vertex of $\mop = \Psi(M)$. Indeed, if $\Psi(m)$ lies on the Weyl wall, this is true by assumption and if $\Psi(m)$ lies in the interior of the Weyl chamber, then it follows from \cref{thm:sjamaar_mp}(\ref{item_pol_vertex_cone}) because $\dim_{\R}T_m(K\cdot m) = 2$.   
We first consider the case that $\mu_T(m)$ lies on  the Weyl wall. Then $\mu_T(m) = v_0$ and it follows from parts (\ref{fixponts_halfrefl}) and (\ref{fixpoints_refl}) of \cref{rem:fixpoints} that  
$v_0$ is of type \eqref{eq:alpha_cone_1} or of type \eqref{eq:alpha_cone_2}. Let $(v_0,\sv)$ be the edge of $\mop$ that is perpendicular to the Weyl wall. Then, using \cref{prop:convexhullweylorbit}, one deduces that $(s_{\alpha}(\sv), \sv)$ is an edge of $\mop_T$ and therefore that $v_0 = \mu_T(m)$ lies on the boundary of $\mop_T$. 
Suppose now that (\ref{atiyah2}) does not hold and that $m \in M^T$ is such that $\mu_T(m)$ does not lie on the boundary of $\mop_T$.  As we just saw, this implies that $\mu_T(m)$ does not lie on the Weyl wall. It follows from \cref{prop:convexhullweylorbit} that the segment $[\mu_T(m), s_{\alpha}(\mu_T(m))] = [\Psi(m),s_{\alpha}(\Psi(m))]$, which is perpendicular to the Weyl wall, lies in the interior of $\mop_T$. Therefore (at least) one of the two edges of  $\mop$ adjacent to $\Psi(m)$ is positive and does not meet the Weyl wall, which means that (\ref{atiyah4}) does not hold.  We have shown that (\ref{atiyah2}) follows from (\ref{atiyah4}).

Next, we observe that (\ref{atiyah5})  follows from (\ref{atiyah4}) by \cref{lem:not_too_many_pos_invcompl}, that the implication ``(\ref{atiyah5}) $\Rightarrow$ (\ref{atiyah3})'' is trivial and that (\ref{atiyah1}) follows from (\ref{atiyah3}) by \cref{thm:extcrit}. 
	
In the remainder of the proof, we show that (\ref{atiyah1}) implies (\ref{atiyah4}). 
We label the vertices of $\mop$ clockwise from $v_0$ to $v_n$ (starting at the wall), and we denote by $v_j'$ the reflection $s_{\alpha}(v_j)$ of $v_j$ across the Weyl wall.
	Note that for $j\neq 0$ the line segment $(v_j,v_j')$ is always the image of a connected component of $M^{Z(\U(2))}$, where $Z(\U(2))$ is the center of $\U(2)$,  and that this connected component is a sphere except when
	an edge of $\mop$ is adjacent to $v_j$ is parallel to $\alpha$, in which case it is $4$-dimensional by \cref{thm:sjamaar_mp}(\ref{item_pol_vertex_cone})
	together with \cref{rem_thm_sjam_mp}(\ref{rem_thm_sjam_mp_slice}).
	
	We  consider two cases, depending on whether the vertex $v_0$ is of type \eqref{eq:alpha_cone_1} (respectively of type \eqref{eq:alpha_cone_2}, which is clearly equivalent)
	or of type \eqref{eq:alpha_cone_3}. 
	In each case, the edges of the x-ray are determined by $\mop$ in the following way:
	\begin{itemize}
		\item In the first case, there are the aforementioned connected components of $M^{Z(\U(2))}$ together with all spheres belonging to those edges $(v_j,v_{j+1})$
		and $(v_j',v_{j+1}')$ which are not parallel to $\alpha$.
		\item In the second case, there are the connected components of $M^{Z(\U(2))}$ together with all spheres belonging to those edges $(v_j,v_{j+1})$
		and $(v_j',v_{j+1}')$, $j\leq n-1$, which are not parallel to $\alpha$, and on top the spheres belonging to $(v_n,v_1')$ and $(v_1,v_n')$.
		The latter are included, because the horizontal edge starting from $v_1$, for example, needs to end in one of $v_1',\hdots,v_{n}'$,
		and due to the convexity of the reflection of $\mop$ across the Weyl wall, the only possible endpoint is then $v_n'$.
	\end{itemize}
	Assume that we are in the second case: $v_0$ is of type \eqref{eq:alpha_cone_3}.  Suppose that (\ref{atiyah4}) does not hold (a polytope illustrating this situation can be found on the right in \cref{Woodward}). By \cref{rem:pos_edge} and without loss of generality, we may assume that the positive edge
	is the edge $(v_1,v_2)$. Then $v_1$ and $v_1'$ are in the interior of $\mop_T$. We claim that the  compatible cone determined by the pair of edges
	$(v_1,v_1'), (v_1,v_n')$ does not admit an extension to a compatible convex polytope. Indeed, the edge $(v_1',v_n)$ emerging from $v_n$
	cannot be part of such a polytope (since this edge intersects the edge $(v_1,v_n')$ in a point which is not the image of a $T$-fixpoint) and neither can the edge
	$(v_1',v_2')$ as convexity would not hold.	
\begin{figure}[ht]
	\begin{tikzpicture}
		\draw[step=1, dotted, gray] (0,0) grid (6,6);

		\draw[very thick] (2,1) -- (1,2);
		\draw[very thick] (2,1) -- (2,5);
		\draw[very thick] (5,2) -- (1,2);
		\draw[very thick] (1,5) -- (1,2);
		\draw[very thick] (2,1) -- (5,1);
		\draw[very thick] (5,2) -- (5,1);
		\draw[very thick] (2,5) -- (1,5);
		\draw[very thick] (2,5) -- (5,2);
		\draw[very thick] (1,5) -- (5,1);
		\draw[very thick, dashed] (0,0)--(6,6);
		\draw[fill] (2,2) circle(0pt) node[below right]{$v_0$};
		\draw[fill] (2,1) circle(2pt) node[below]{$v_3$};
		\draw[fill] (5,1) circle(2pt) node[below]{$v_2$};
		\draw[fill] (5,2) circle(2pt) node[right]{$v_1$};
		\draw[fill] (1,2) circle(2pt) node[left]{$v_3'$};
		\draw[fill] (2,5) circle(2pt) node[above]{$v_1'$};
		\draw[fill] (1,5) circle(2pt) node[left]{$v_2'$};
	\end{tikzpicture}
	\begin{tikzpicture}
	\draw[step=1, dotted, gray] (0,0) grid (6,6);

	\draw[very thick] (2,1) -- (1,2);
	\draw[very thick] (2,1) -- (2,3);
	\draw[very thick] (3,2) -- (1,2);
	\draw[very thick] (1,5) -- (1,2);
	\draw[very thick] (2,1) -- (5,1);
	\draw[very thick] (1,5) -- (5,1);
	\draw[very thick] (3,2) -- (2,3);
	\draw[very thick] (2,3) -- (1,5);
	\draw[very thick] (3,2) -- (5,1);
	\draw[very thick, dashed] (0,0)--(6,6);
	\draw[fill] (2,1) circle(2pt) node[below]{$v_3$};
	\draw[fill] (2,2) circle(0pt) node[below right]{$v_0$};
	\draw[fill] (5,1) circle(2pt) node[below]{$v_2$};
	\draw[fill] (3,2) circle(2pt) node[right]{$v_1$};
	\draw[fill] (1,2) circle(2pt) node[left]{$v_3'$};
	\draw[fill] (2,3) circle(2pt) node[above]{$v_1'$};
	\draw[fill] (1,5) circle(2pt) node[left]{$v_2'$};
\end{tikzpicture}
	\caption{Two polytopes $\mop$ with $v_0$ of type \eqref{eq:alpha_cone_3} and their x-rays. The left x-ray satisfies the extension criterion, the right one does not. 
	\label{Woodward}}
\end{figure}
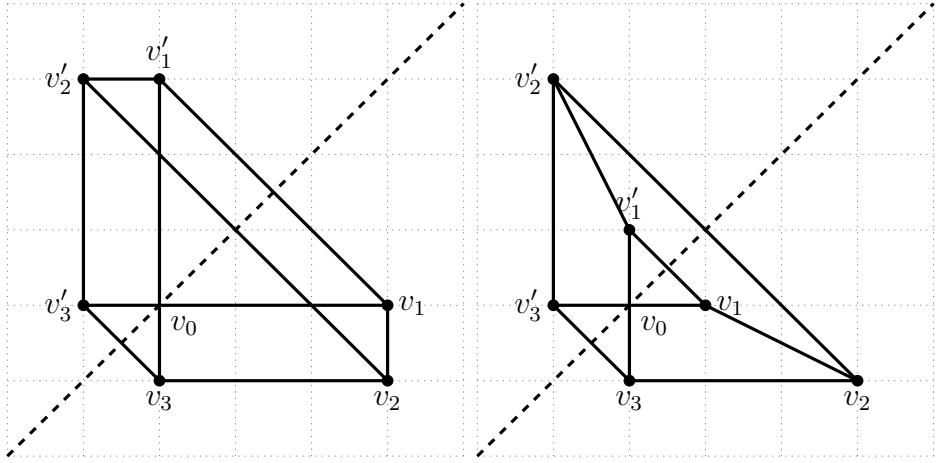
	
	Now assume that we are in the second case:  $v_0$ is of type \eqref{eq:alpha_cone_1}. 
	Suppose again that (\ref{atiyah4}) does not hold (such a polytope is given on the right in \cref{fig:morexrays}).  
		One of the compatible cones with vertex $v_0$ is spanned by the pairs of edges 
	$(v_0,v_1'), (v_0, v_n)$.
If it admitted an extension to a compatible
		convex polytope $\mathcal{Q}$, then the second edge of $\mathcal{Q}$ adjacent to $v_1'$ would have to be $(v_1',v_1)$, and
		the second edge emerging from $v_1$ would have to be $(v_1,v_2)$, which contradicts the convexity of $\mathcal{Q}$ because (\ref{atiyah4}) does not hold.  
\begin{figure}[ht] 
	\begin{tikzpicture}
		\draw[step=1, dotted, gray] (0,0) grid (5,5);

		\draw[very thick, dashed] (0,0)--(5,5);
		\draw[fill] (2,2) circle(2pt) node[below]{$v_0$};
		\draw[fill] (5,2) circle(2pt) node[right]{$v_1$};
		\draw[fill] (5,0) circle(2pt) node[right]{$v_2$};
		\draw[fill] (4,0) circle(2pt) node[left]{$v_3$};
		\draw[fill] (2,5) circle(2pt) node[above]{$v_1'$};
		\draw[fill] (0,5) circle(2pt) node[above]{$v_2'$};
		\draw[fill] (0,4) circle(2pt) node[below]{$v_3'$};
		
		\draw[very thick] (2,2) -- (5,2) -- (5,0) -- (4,0) -- (2,2);
		\draw[very thick] (2,2) -- (2,5) -- (0,5) -- (0,4) -- (2,2);
		\draw[very thick] (5,2) -- (2,5);
		\draw[very thick] (5,0) -- (0,5);
	\end{tikzpicture}
	\begin{tikzpicture}
		\draw[step=1, dotted, gray] (0,0) grid (5,5);

		\draw[very thick, dashed] (0,0)--(5,5);
		\draw[fill] (2,2) circle(2pt) node[below]{$v_0$};
		\draw[fill] (3,2) circle(2pt) node[below]{$v_1$};
		\draw[fill] (5,1) circle(2pt) node[right]{$v_2$};
		\draw[fill] (3,1) circle(2pt) node[left]{$v_3$};
		\draw[fill] (2,3) circle(2pt) node[left]{$v_1'$};
		\draw[fill] (1,5) circle(2pt) node[above]{$v_2'$};
		\draw[fill] (1,3) circle(2pt) node[left]{$v_3'$};
		
		\draw[very thick] (2,2) -- (3,2) -- (5,1) -- (3,1) -- (2,2);
		\draw[very thick] (2,2) -- (2,3) -- (1,5) -- (1,3) -- (2,2);
		\draw[very thick] (3,2) -- (2,3);
		\draw[very thick] (5,1) -- (1,5);
	\end{tikzpicture}
	\caption{Two polytopes $\mop$ with $v_0$ of type \eqref{eq:alpha_cone_1} and their x-rays. The left x-ray satisfies the extension criterion, the right one does not. \label{fig:morexrays}}
\end{figure}
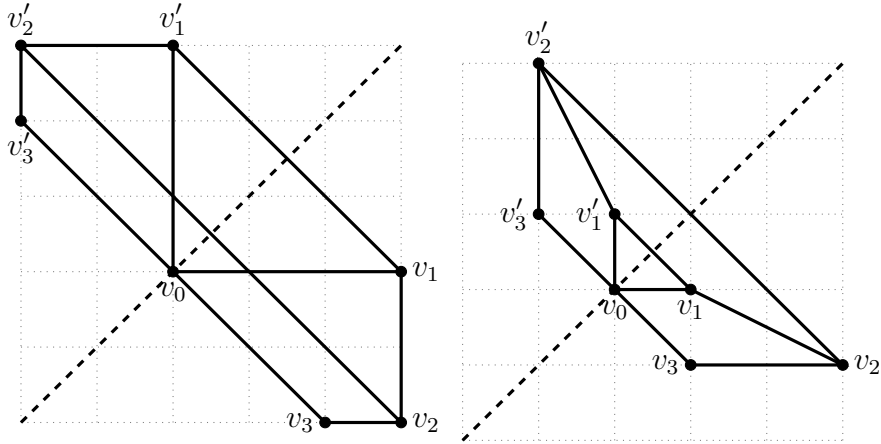 
\end{proof}

\begin{remark}
The momentum polytopes in \cref{fig:triangle_edge_on_wall,fig:triangle_Delzant}  show that the equivalence of (\ref{atiyah2}) and (\ref{atiyah4}) of \cref{prop:atiyah} do not hold  when $\mop$ does not meet the Weyl wall in exactly one point. On the other hand, \cref{prop:invcompl} tells us that when $\mop(M)$ does not meet the Weyl wall in exactly one point, then the multiplicity free $\U(2)$-manifold $M$ with trivial principal isotropy group always admits an invariant compatible complex structure, and therefore the equivalences (\ref{atiyah4}) $\Leftrightarrow$ (\ref{atiyah3}) $\Leftrightarrow$ (\ref{atiyah1}) hold in this case as well. 
\end{remark}

\begin{proof}[Proof of \cref{prop:invcompl}] \label{proof_prop_invcompl}
The proposition immediately follows from \cref{lem:nosvonwallkaehler} and the equivalence of
(\ref{atiyah4}) and (\ref{atiyah5})  
in \cref{prop:atiyah}.
\end{proof}

\section{Triangles} \label{sec:triangles}
We continue to use the notation in \cref{ex:u2}. In this section, we classify the multiplicity free $\U(2)$-manifolds with trivial principal isotropy group of which the momentum polytope is a triangle. The following lemma determines the \emph{Delzant} triangles and will be used to describe the triangles in $\ft_+$ which can occur as momentum polytopes of such manifolds.

\begin{lemma}\label{lemma:delzant_triangle}
Let $\mathsf{u},\mathsf{v}$ and $\mathsf{w} \in \ft^* = \wl \otimes_{\Z}\R$ such that 
\[\mathsf{v} - \mathsf{u}, \mathsf{w}-\mathsf{u}, \mathsf{w}-\mathsf{v} \in \wl \otimes_{\Z}\Q.\]and let $\rho_1,\rho_2,\rho_3$ be the primitive elements of $\wl$ such that
\[\Rp\rho_1 = \Rp(\mathsf{v}-\mathsf{u}), \Rp\rho_2 = \Rp(\mathsf{w}-\mathsf{u}), \Rp\rho_3 = \Rp(\mathsf{w}-\mathsf{v}).\]
Suppose $(\rho_1,\rho_2)$ is a basis of $\wl$. Both pairs $(\rho_1,\rho_3)$ and $(\rho_2,\rho_3)$ are bases of $\wl$ if and only if $\rho_3 = \rho_2 - \rho_1$. 
\end{lemma}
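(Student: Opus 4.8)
The plan is to convert the geometric configuration into a single linear relation among $\rho_1,\rho_2,\rho_3$ and then read off each basis condition as a $2\times 2$ determinant. First I would record that, because $\rho_1,\rho_2,\rho_3$ are the \emph{primitive} lattice vectors on the three rays, there are positive scalars $a,b,c$ with $\mathsf{v}-\mathsf{u}=a\rho_1$, $\mathsf{w}-\mathsf{u}=b\rho_2$ and $\mathsf{w}-\mathsf{v}=c\rho_3$. The elementary identity $\mathsf{w}-\mathsf{v}=(\mathsf{w}-\mathsf{u})-(\mathsf{v}-\mathsf{u})$ then yields
\[
c\rho_3 = b\rho_2 - a\rho_1, \qquad a,b,c>0.
\]

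Next, since $(\rho_1,\rho_2)$ is a basis of $\wl$ and $\rho_3\in\wl$, I would write $\rho_3=p\rho_1+q\rho_2$ with $p,q\in\Z$. Comparing coefficients with the displayed relation gives $cp=-a$ and $cq=b$, hence $p<0<q$; as $p$ and $q$ are integers, this forces $p\le-1$ and $q\ge 1$. Extracting this sign information from the positivity of $a,b,c$ is the only non-formal step, and I expect it to be the main (indeed sole) obstacle: everything afterwards is routine, but without these inequalities the determinant conditions below would not pin down the values of $p$ and $q$.

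Finally I would express the two candidate pairs in the basis $(\rho_1,\rho_2)$ and compute determinants. The pair $(\rho_1,\rho_3)$ has coordinate matrix $\begin{pmatrix}1&p\\0&q\end{pmatrix}$ of determinant $q$, so $(\rho_1,\rho_3)$ is a basis of $\wl$ if and only if $q=\pm1$, that is, using $q\ge1$, if and only if $q=1$. Likewise $(\rho_2,\rho_3)$ has coordinate matrix $\begin{pmatrix}0&p\\1&q\end{pmatrix}$ of determinant $-p$, so it is a basis if and only if $p=\pm1$, that is, using $p\le-1$, if and only if $p=-1$. Therefore both pairs are bases precisely when $p=-1$ and $q=1$, which is exactly the condition $\rho_3=\rho_2-\rho_1$; this chain of equivalences establishes both implications of the lemma simultaneously.
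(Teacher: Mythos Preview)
Your proof is correct and follows essentially the same approach as the paper: write $\rho_3$ in the basis $(\rho_1,\rho_2)$, use the two basis conditions to force the coefficients into $\{\pm1\}$, and use the geometry of the triangle (your positivity argument, the paper's terse ``the definitions of $\rho_1,\rho_2,\rho_3$ then imply\dots'') to fix the signs. The only cosmetic difference is that you run the argument as a single chain of equivalences, whereas the paper disposes of the ``if'' direction separately and proves the converse; your version also makes the sign step explicit rather than leaving it to the reader.
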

\begin{proof}
The ``if'' statement is clear. To prove the converse, let $a,b\in \Z$ such that $\rho_3 = a\rho_1 + b\rho_2$. It follows from the assumption that $(\rho_1,\rho_3)$ and $(\rho_2,\rho_3)$ are bases of $\wl$, that $a,b \in \{1,-1\}$. The definitions of  $\rho_1,\rho_2,\rho_3$ then imply that $a=-1$ and $b=1$.  
\end{proof} 

Recall from \cref{ex:u2} that $\alpha = \varepsilon_1 - \varepsilon_2$ is the simple root of $\U(2)$. 
\begin{proposition} \label{prop:triangles}
The triangles in $\ft_+$ that occur as momentum polytopes of multiplicity free $\U(2)$-manifolds with trivial principal isotropy group are:
\begin{enumerate}
\item \label{triangles_delzant} $r(-\varepsilon_2)  + s(\varepsilon_1+\varepsilon_2) + t\cdot \conv(0,a_1(-\varepsilon_2) + b_1\varepsilon_1,a_2(-\varepsilon_2) + b_2\varepsilon_1)$,  where $r,t \in \R_{>0}$, $s \in \R$, $a_1,b_1,a_2,b_2\in \Z$ with $\det\begin{pmatrix}
a_1 & a_2 \\ b_1 & b_2
\end{pmatrix} =  1$ and $a_i+b_i \geq 0$ for each $i \in \{1,2\}$;
\item \label{triangles_side_wall}  $s(\varepsilon_1+\varepsilon_2) + t\cdot\conv(0,\ell(\varepsilon_1 + \varepsilon_2), k(\varepsilon_1 + \varepsilon_2) + \varepsilon_1)$, where $s \in \R$, $t \in \R_{>0}$,  $k \in \Z$, $\ell \in \{-1,1\}$;
\item \label{triangles_halfright_1} $s(\varepsilon_1+\varepsilon_2) + t\cdot\conv(0,\alpha, j\alpha+\varepsilon_1)$, where $s \in \R$, $t \in \R_{>0}$, $j \in \N$
\item \label{triangles_halfright_2} $s(\varepsilon_1+\varepsilon_2) + t\cdot\conv(0,\alpha, j\alpha-\varepsilon_2)$, where $s \in \R$, $t \in \R_{>0}$, $j \in \N$;
\item \label{triangles_right} $s(\varepsilon_1+\varepsilon_2) + t\cdot\conv(0,\varepsilon_1, -\varepsilon_2)$, where $s \in \R$, $t \in \R_{>0}$. 
\end{enumerate}
\end{proposition}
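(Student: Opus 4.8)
By \cref{prop:mom_polytopes} a triangle $\mop\subset\ft_+$ occurs as a momentum polytope precisely when it satisfies conditions (\ref{item:mom_poly_1})--(\ref{item:mom_poly_4}) there, so the plan is to prove that the triangles satisfying those conditions are exactly the five listed families. One inclusion is a direct verification: for each family I would read off the three vertices, compute at each vertex the two primitive edge vectors $\rho_1^a,\rho_2^a$, and check by a $2\times 2$ determinant that every interior vertex is Delzant and that every vertex on the wall $\{\lambda:\<\alpha^\vee,\lambda\>=0\}$ carries a pair among \eqref{eq:cone_1}--\eqref{eq:alpha_cone_3}. For instance, in family (\ref{triangles_halfright_1}) the vertices $0,\alpha,j\alpha+\varepsilon_1$ give the pairs $\{\alpha,j\alpha+\varepsilon_1\}$ (which is \eqref{eq:alpha_cone_1}), $\{\alpha,(j-1)\alpha+\varepsilon_1\}$ and $\{j\alpha+\varepsilon_1,(j-1)\alpha+\varepsilon_1\}$, the latter two being unimodular.

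For the converse I would use the elementary fact that a triangle contained in the half-plane $\ft_+$ meets its boundary wall in exactly one of three ways: not at all, in a single vertex, or along a whole edge (a side cannot be tangent to the wall at an interior point without lying on it). Before splitting into these cases I would record the arithmetic that each pair in \eqref{eq:cone_1}, \eqref{eq:cone_2}, \eqref{eq:alpha_cone_1}, \eqref{eq:alpha_cone_2} spans a unimodular cone (determinant $\pm1$), whereas the reflective pair $\{j\alpha+\varepsilon_1,\,j\alpha-\varepsilon_2\}$ of \eqref{eq:alpha_cone_3} has determinant $-(2j+1)$ and is therefore a $\Z$-basis only when $j=0$.

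If no vertex lies on the wall, all three vertices are interior, so condition (\ref{item:mom_poly_3}) makes all three edge-pairs $\Z$-bases and \cref{lemma:delzant_triangle} forces the third edge direction to equal $\rho_2-\rho_1$; this pins the triangle down to $\mathsf{u}+t\,\conv(0,\rho_1,\rho_2)$ for a basis $(\rho_1,\rho_2)$ and an interior base point $\mathsf{u}$ with $t>0$. Writing $\rho_i=a_i(-\varepsilon_2)+b_i\varepsilon_1$, orienting so that $\det\begin{pmatrix}a_1&a_2\\b_1&b_2\end{pmatrix}=1$, using $\<\alpha^\vee,\rho_i\>=a_i+b_i\ge0$, and parametrizing the interior base points as $\mathsf{u}=r(-\varepsilon_2)+s(\varepsilon_1+\varepsilon_2)$ with $r>0$, recovers family (\ref{triangles_delzant}). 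If a full edge lies on the wall, both wall-vertices must have a wall-direction edge and hence are of type \eqref{eq:cone_1} or \eqref{eq:cone_2}; since these cones are unimodular and the remaining vertex is interior, all three pairs are again bases and \cref{lemma:delzant_triangle} delivers family (\ref{triangles_side_wall}). If a single vertex $a$ lies on the wall, neither edge at $a$ can run along the wall (otherwise its other endpoint would also lie on the wall), so the pair at $a$ is \eqref{eq:alpha_cone_1}, \eqref{eq:alpha_cone_2} or \eqref{eq:alpha_cone_3}; in the first two, unimodular, subcases \cref{lemma:delzant_triangle} applies verbatim and yields families (\ref{triangles_halfright_1}) and (\ref{triangles_halfright_2}).

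I expect the one genuine obstacle to be the reflective type \eqref{eq:alpha_cone_3}, where the cone at $a$ fails to be unimodular for $j\ge1$ and \cref{lemma:delzant_triangle} is unavailable. Here I would argue directly. Translating $a$ to the origin, the other two vertices lie on the rays $\Rp\rho_1$ and $\Rp\rho_2$ with $\rho_1=j\alpha+\varepsilon_1$, $\rho_2=j\alpha-\varepsilon_2$, and the Delzant condition at each of them forces the primitive direction $\sigma=(x,y)$ of the opposite edge (in the basis $\varepsilon_1,\varepsilon_2$) to satisfy $\det(\rho_1,\sigma)=\pm1$ and $\det(\rho_2,\sigma)=\pm1$ simultaneously. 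Adding and subtracting these gives $(2j+1)(x+y)=\pm1\pm1$ and $y-x=\pm1\mp1$, whose only primitive solutions are $\sigma=\pm(\varepsilon_1+\varepsilon_2)$, which forces $j=0$, and $\sigma=\pm\alpha$; the latter cannot be the direction of a segment joining points on the two positive rays, since that would require the two ray-parameters to have opposite signs. Hence $j=0$ and the triangle is $s(\varepsilon_1+\varepsilon_2)+t\,\conv(0,\varepsilon_1,-\varepsilon_2)$, i.e.\ family (\ref{triangles_right}), which completes the classification.
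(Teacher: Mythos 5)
Your proposal is correct and takes essentially the same route as the paper: both arguments reduce the classification to \cref{prop:mom_polytopes} and \cref{lemma:delzant_triangle}, with the whole crux being that a wall vertex of the reflective type \eqref{eq:alpha_cone_3} forces $j=0$, proved from the unimodularity conditions at the two interior vertices. Your version solves the two determinant equations first and then excludes $\sigma=\pm\alpha$ geometrically, whereas the paper first uses the cone condition to identify the third edge direction as $-(\varepsilon_1+\varepsilon_2)$ and then obtains $2j+1=\pm 1$; this is a reordering of the same argument, and the explicit case analysis on how the triangle meets the wall (together with choosing the base vertex in family (\ref{triangles_delzant}) where $\<\alpha^\vee,\cdot\>$ is minimal, so that $a_i+b_i\ge 0$) is exactly the routine part the paper leaves to the reader.
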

\begin{proof}
Observe that the all the sets in \cref{eq:cone_1,eq:cone_2,eq:alpha_cone_1,eq:alpha_cone_2} and the one in \cref{eq:alpha_cone_3} with $j=0$ are bases of $\wl$.    
The proposition now follows from \cref{prop:mom_polytopes} and \cref{lemma:delzant_triangle} once we prove the following claim: if $\Pc \subset \ft_+$ is a triangle satisfying conditions (\ref{item:mom_poly_2}) and (\ref{item:mom_poly_3}) of \cref{prop:mom_polytopes} and $a$ is a vertex of $\Pc$ such that $\<\alpha^{\vee},a\>=0$ and
\[\{\rho_1^a, \rho_2^a\} = \{j\alpha+\varepsilon_1, j\alpha-\varepsilon_2\}  \quad \text{ for some }j \in \N, \]
then $j=0$. We may assume that $\rho_1^a = j\alpha+\varepsilon_1$ and $\rho_2^a = j\alpha-\varepsilon_2$. Let $b,c$ be the other two vertices of $\Pc$, such that 
$\Rp(b-a) = \Rp\rho_1^a$ and $\Rp(c-a) = \Rp\rho_2^a$ and write $\rho$ for the primitive element of $\wl$ on the ray $\Rp (c-b)$. Since $(\rho,\rho_1^a)$ is a basis of $\wl$ by condition (\ref{item:mom_poly_3}) of \cref{prop:mom_polytopes}, there exist $m,n \in \Z$ such that $\rho_2^a = m\rho + n \rho_1^a$. Using that $(\rho,\rho_2^a)$ is also a basis of $\wl$ it follows that $n \in \{-1,1\}$.
As $\rho_2^a$ belongs to the cone $\{p \rho + q\rho^a_1 \colon p,q \in \Rp\}$ we obtain $n=1$ and $m> 0$. Consequently $m\rho = \rho_2^a - \rho_1^a =-(\varepsilon_1 + \varepsilon_2).$ As $\rho \in \wl$ it follows that $\rho = -(\varepsilon_1 + \varepsilon_2)$. Using once more that $(\rho,\rho_1^a)$ is a basis of $\wl$ it follows that $j=0$, which completes the  proof  of the claim. 
\end{proof}

For each triangle in \cref{prop:triangles}, Knop's \cref{thm:knop_classif_mf} guarantees the existence of a multiplicity free Hamiltonian $\U(2)$-manifold whose momentum polytope is that triangle. \Cref{thm:triangles_realisation} below gives an explicit description of these manifolds. \Cref{prop:mfd_triangles_delzant,prop:mfd_triangles_right,prop:mfd_triangles_side_wall,prop:mfd_triangles_halfright_1,prop:mfd_triangles_halfright_2} provide the Hamiltonian structures. 

\begin{theorem}\label{thm:triangles_realisation}
Let $\Qc$ be one of the triangles listed in \cref{prop:triangles} and let $M$ be the (up to isomorphism) unique multiplicity free $\U(2)$-manifold with $\mop(M) = \Qc$ and trivial principal isotropy group. Then:
\begin{enumerate}[(a)]
\item $M$ is $\U(2)$-equivariantly \emph{diffeomorphic} to the corresponding manifold listed in the second column of \cref{table_triangle_manifolds}. 
\item $M$ is isomorphic (as a Hamiltonian $\U(2)$-manifold) to the corresponding manifold listed in the second column of \cref{table_triangle_manifolds} equipped with the Hamiltonian structure described in the proposition listed in the last column of \cref{table_triangle_manifolds}.
\item \label{thm:triangles_realisation_item_3} $M$ has an invariant compatible complex structure $J$ such that the complex manifold $(M,J)$, equipped with the action of $\GL(2)$ that is the complexification of the $\U(2)$-action, is $\GL(2)$-equivariantly biholomorphic to the corresponding $\GL(2)$-variety listed in the third column  of \cref{table_triangle_manifolds}.
\end{enumerate}
\end{theorem}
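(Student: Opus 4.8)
The plan is to derive all three assertions from Knop's uniqueness theorem \cref{thm:knop_classif_mf}(\ref{thm:knop_classif_mf_u_item}). For each triangle $\Qc$ in \cref{prop:triangles} it suffices to produce a \emph{single} explicit compact connected Hamiltonian $\U(2)$-manifold $M'$ — the entry in the second column of \cref{table_triangle_manifolds} — and to verify that $M'$ is multiplicity free with trivial principal isotropy group and has $\mop(M')=\Qc$. Once this is established, \cref{thm:knop_classif_mf}(\ref{thm:knop_classif_mf_u_item}) supplies a $\U(2)$-equivariant symplectomorphism $M\to M'$ intertwining the momentum maps; this single isomorphism yields both the equivariant diffeomorphism of part (a) and the Hamiltonian isomorphism of part (b), while the Hamiltonian structure carried by $M'$ is the content of the proposition cited in the last column (\cref{prop:mfd_triangles_delzant,prop:mfd_triangles_right,prop:mfd_triangles_side_wall,prop:mfd_triangles_halfright_1,prop:mfd_triangles_halfright_2}). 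The theorem thus reduces to the five case-by-case constructions in those propositions.

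In each case the model is built either as a projective space $\PP(V)$ attached to a suitable unitary $\U(2)$-representation $V$ (using the twists $\C_{\det^k}$ of \cref{ex:u2}), or, when the vertex on the Weyl wall is of type \eqref{eq:alpha_cone_1}, \eqref{eq:alpha_cone_2} or \eqref{eq:alpha_cone_3}, as a projectivised $\GL(2)$-equivariant bundle over $\GL(2)/B^-\cong\PP^1$, i.e.\ a smooth compactification of the affine local model $\GL(2)\times_{\TC}\C_{\ldots}$ of \cref{table_gl2_pointed_wl}. For the projective-space models the momentum polytope is read off \cref{ex:mupv}: $\mop(\PP(V))$ is the intersection with $\wc$ of $-\conv$ of the weights of $V$, and the free parameters $r,s,t,j,k,\ell,a_i,b_i$ of \cref{prop:triangles} are matched by the choice of $V$, of the equivariant line bundle fixing the Fubini--Study class, and of an overall rescaling and central $\C_{\det^{\bullet}}$-translation. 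For the bundle models one first identifies the underlying smooth manifold via the comparison \cref{prop:hfb_comparison}, passing between $\U(2)\times_T F$ and $\GL(2)\times_{B^-}F$, and then pins down $\mop$ near each vertex from the fibre datum together with \cref{thm:sjamaar_mp}; since a convex polygon is determined by its vertices and edge-cones, this forces $\mop(M')=\Qc$. In every case one verifies $\dim_\R M'=6=\dim\U(2)+\rk\U(2)$ and that the generic stabiliser is trivial, so that $M'$ is multiplicity free with trivial principal isotropy by \cref{rem:mf}(\ref{rem:mf_item_b}) and \eqref{eq:dim_mf}.

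For part (\ref{thm:triangles_realisation_item_3}) the observation is that each model $M'$ is the underlying differentiable manifold of a smooth projective $\GL(2)$-variety $X$ in the third column. By \cref{prop:analytic}, $X$ carries a complex structure for which the algebraic $\GL(2)$-action is holomorphic; restricting to the maximal compact subgroup $\U(2)$ and taking the Fubini--Study--type Kähler form from an equivariant projective embedding produces a $\U(2)$-invariant compatible complex structure $J$. All five triangles satisfy the criterion of \cref{prop:invcompl} — for the three families meeting the wall in a single vertex one checks directly that every positive edge contains that vertex — so $M$ is Kählerizable, and by the uniqueness already invoked $(M,J)$ is equivariantly biholomorphic to $X$, with the complexified $\U(2)$-action recovering the given algebraic $\GL(2)$-action. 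In particular the diffeomorphism of part (a) is the smooth shadow of this biholomorphism.

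The main obstacle will be the two ``halfright'' families (\cref{prop:mfd_triangles_halfright_1,prop:mfd_triangles_halfright_2}), where the local model $\GL(2)\times_{\TC}\C_{\ldots}$ is only \emph{affine} and must be compactified to a smooth projective $\GL(2)$-variety realising \emph{exactly} the prescribed triangle. The delicate point is the combinatorial bookkeeping: choosing the correct equivariant bundle and the precise momentum datum so that the parameter $j$ and the scaling match, while keeping the principal isotropy trivial and the complex structure compatible. The comparison isomorphism of \cref{prop:hfb_comparison} and Sjamaar's local description \cref{thm:sjamaar_mp} are precisely the tools that make this identification tractable.
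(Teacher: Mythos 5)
Your proposal takes essentially the same route as the paper's proof: reduce everything to the five explicit constructions in \cref{prop:mfd_triangles_delzant,prop:mfd_triangles_side_wall,prop:mfd_triangles_halfright_1,prop:mfd_triangles_halfright_2,prop:mfd_triangles_right} (each producing a model that is multiplicity free, has trivial principal isotropy group, and realizes the prescribed triangle), then invoke Knop's uniqueness \cref{thm:knop_classif_mf}(\ref{thm:knop_classif_mf_u_item}) for parts (a) and (b), and obtain part (\ref{thm:triangles_realisation_item_3}) by transporting the models' algebraic complex structures, with the identification of the complexified action justified in the paper by \cite[\S 1.4]{heinzner-GIT_on_stein}. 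Your detour through \cref{prop:invcompl} is redundant (the models are K\"ahler by construction, and the complex structure on $M$ is simply pulled back along the Knop isomorphism), and your description of case (\ref{triangles_right}) as a projective space or $\PP^1$-bundle is inaccurate (it is the coadjoint orbit $\SO(5)/[\SO(2)\times\SO(3)]$), but neither point affects the argument since, as in the paper, the constructions are deferred to the cited propositions.
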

\begin{proof}
In each proposition listed in the fourth column of \cref{table_triangle_manifolds}, we define a structure of multiplicity free $\U(2)$-manifold on the smooth complex $\GL(2)$-variety $M$ listed in the third column 
such that 
\begin{itemize}
\item the momentum polytope of $M$  is the corresponding triangle of \cref{prop:triangles};
\item the $\U(2)$-invariant complex structure that $M$ carries by virtue of being a smooth complex $\GL(2)$-variety (cf.\ \cref{prop:analytic}) is compatible with the symplectic form on $M$; and
\item $M$, viewed as a differentiable manifold, is $\U(2)$-equivariantly diffeomorphic to the manifold listed in the second column of \cref{table_triangle_manifolds}.
\end{itemize}
Since, in each case, the $\U(2)$-action on $M$ has a trivial principal isotropy group, all the assertions now follow from part (\ref{thm:knop_classif_mf_u_item}) of Knop's \cref{thm:knop_classif_mf} and, for assertion (\ref{thm:triangles_realisation_item_3}),  also from basic facts about the complexification of actions (see, e.g., \cite[\S 1.4]{heinzner-GIT_on_stein}).
\end{proof}

\begin{longtable}{|c|l|c|c|}
\caption{Multiplicity free $\U(2)$-manifolds $M$ with trivial principal isotropy group for which $\Pc(M)$ is a triangle, as asserted in \cref{thm:triangles_realisation}. The cases are numbered as in \cref{prop:triangles}.}\label{table_triangle_manifolds}\\
\hline
Case &  $M$ as $\U(2)$-manifold &$M$ as $\GL(2)$-variety & Prop.  \\
\hline
(\ref{triangles_delzant})& \begin{tabular}{l}$ \hfb{\U(2)}{T} {\PP(V)}$,\\ where $V=\C\oplus \C_{-\delta_1} \oplus \C_{-\delta_2}$,\\ with $\delta_1 = a_1(-\eps_2)+b_1\eps_1$,\\ $\delta_2 = a_2(-\eps_2)+b_2\eps_1$,\\
$a_1,b_1,a_2,b_2 \in \Z$ as in \\ \cref{prop:triangles}(\ref{triangles_delzant})
\end{tabular} & $\hfb{\GL(2)}{B^{-}}{\PP(V)}$ & \ref{prop:mfd_triangles_delzant}\\
\hline
(\ref{triangles_side_wall}) &  \begin{tabular}{l}$\PP\bigl((\C^2 \otimes \C_{\det^{-(k+1)}}) \oplus \C_{\det^{-\ell}} \oplus \C\bigr),$ \\
where $k \in \Z, \ell \in \{-1,1\}.$
\end{tabular} & idem & \ref{prop:mfd_triangles_side_wall}\\
\hline
(\ref{triangles_halfright_1})   & \begin{tabular}{l}$\hfb{\U(2)}{T}{\PP(\C^2 \oplus \C_{-j\alpha})},$ \\where $j \in \N.$ \end{tabular} &$\hfb{\GL(2)}{B^{-}}{\PP(\C^2 \oplus \C_{-j\alpha})}$& \ref{prop:mfd_triangles_halfright_1}\\
\hline
(\ref{triangles_halfright_2})& \begin{tabular}{l}$ \hfb{\U(2)}{T}{\PP((\C^2)^* \oplus \C_{-j\alpha})},$  \\where $j \in \N.$ \end{tabular}&$\hfb{\GL(2)}{B^{-}}{\PP((\C^2)^* \oplus \C_{-j\alpha})}$ & \ref{prop:mfd_triangles_halfright_2} \\
\hline
(\ref{triangles_right})& \begin{tabular}{l} $\SO(5)/[\SO(2)\times \SO(3)]$, \\
where $\U(2)$ acts through\\ $\U(2) \into \SO(4) \subset \SO(5)$ \end{tabular}& \begin{tabular}{l} $\SO(5,\C)/P$, \\where $P \subset \SO(5,\C)$ is the \\ minimal standard parabolic\\ 
assoc.\ to the short simple root, \\
$\GL(2)$ acts through \\ $\GL(2) \into \SO(4,\C) \subset \SO(5,\C)$.   \end{tabular} & \ref{prop:mfd_triangles_right} \\
\hline
\multicolumn{4}{|l|}{$\C$ always stands for the trivial representation; $B^-$ is defined in \cref{eq:defB-}.}
\\\multicolumn{4}{|l|}{$\C^2$ stands for the defining representation of $\GL(2)$ or its restriction to $\U(2)$, $T$ or $B^-$.}\\
\hline
\end{longtable}

We will make use of the following standard fact, which follows directly from the definitions, taking into account that $\varepsilon_1 + \varepsilon_2 \in \ft^* \subset \fu(2)^*$ is a fixpoint for the coadjoint action of $\U(2)$. 
\begin{lemma}\label{lem:stretch_and_shift}
Let $(M, \omega_M, \mu_M)$ be a compact Hamiltonian $\U(2)$-manifold with momentum polytope $\Qc$. If $s \in \R, t \in \R_{>0}$, then 
\[\mu_M^{s,t} := t\mu_M + s(\eps_1 + \eps_2)\] is a momentum map for the symplectic form $t\omega_M$ on $M$ and the momentum polytope of the Hamiltonian $\U(2)$-manifold $(M,t\om_M,\mu_M^{s,t})$ is  \[s(\varepsilon_1+\varepsilon_2) + t\cdot\Qc.\] Furthermore, if  $(M, \omega_M, \mu_M)$ is multiplicity free, then so is $(M,t\om_M,\mu_M^{s,t})$
\end{lemma}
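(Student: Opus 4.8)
The plan is to observe that passing from $(M,\om_M,\mu_M)$ to $(M,t\om_M,\mu_M^{s,t})$ amounts to postcomposing the momentum map with the single affine map
\[
A\colon \fk^*\to\fk^*,\qquad A(x)=tx+s(\eps_1+\eps_2),
\]
so that $\mu_M^{s,t}=A\circ\mu_M$, and then to track how each of the three assertions behaves under $A$. The one structural fact I would record at the very start is that $\eps_1+\eps_2$ is fixed by the coadjoint action of $\U(2)$ and satisfies $\langle\alpha^\vee,\eps_1+\eps_2\rangle=0$ (it is the determinant weight $\om_2$, hence central and on the Weyl wall). The former makes $A$ commute with the coadjoint action, and the latter, together with $t>0$, makes $A$ restrict to a bijection of the Weyl chamber $\wc$ with inverse $A^{-1}(y)=t^{-1}(y-s(\eps_1+\eps_2))$. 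These two properties are the only real content; everything else is a direct check against the definitions.

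First I would verify the momentum map axioms for $\mu_M^{s,t}$ with respect to $t\om_M$. Equivariance holds because $t\mu_M$ is equivariant and the added constant $s(\eps_1+\eps_2)$ is coadjoint-fixed, so $A$ intertwines the coadjoint action with itself. For the differential condition, for each $\xi\in\fk$ the function $(\mu_M^{s,t})^\xi$ differs from $t\mu_M^\xi$ by the constant $s\langle\eps_1+\eps_2,\xi\rangle$, whence $d(\mu_M^{s,t})^\xi=t\,d\mu_M^\xi=\iota(\xi_M)(t\om_M)$, which is exactly what is required.

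Next I would compute the momentum polytope, which by definition is $\mu_M^{s,t}(M)\cap\wc$. Using $\mu_M^{s,t}(M)=A(\mu_M(M))$, the identity $A(\wc)=\wc$, and the injectivity of $A$, I would write
\[
\mu_M^{s,t}(M)\cap\wc=A(\mu_M(M))\cap A(\wc)=A\bigl(\mu_M(M)\cap\wc\bigr)=A(\Qc)=s(\eps_1+\eps_2)+t\cdot\Qc,
\]
which is the asserted polytope.

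Finally, for the multiplicity free claim I would again use that $A$ is an equivariant bijection. Given $b\in\mu_M^{s,t}(M)$, set $a=A^{-1}(b)$; equivariance gives $\U(2)_b=\U(2)_a$ for the coadjoint stabilizers, while $(\mu_M^{s,t})^{-1}(b)=\mu_M^{-1}(a)$ because $\mu_M^{s,t}=A\circ\mu_M$. Hence $(\mu_M^{s,t})^{-1}(b)/\U(2)_b=\mu_M^{-1}(a)/\U(2)_a$, which is a single point by hypothesis, and as $b$ ranges over $\mu_M^{s,t}(M)$ the point $a$ ranges over all of $\mu_M(M)$, so condition \eqref{eq:defmfm} holds for the new structure. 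I do not anticipate a genuine obstacle: the entire argument is bookkeeping around the coadjoint-fixed vector $\eps_1+\eps_2$, exactly as indicated in the sentence preceding the lemma.
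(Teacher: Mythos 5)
Your proposal is correct and is exactly the argument the paper has in mind: the paper states the lemma as a standard fact that ``follows directly from the definitions, taking into account that $\eps_1+\eps_2$ is a fixpoint for the coadjoint action of $\U(2)$,'' and your write-up simply fills in those routine checks (equivariance, the differential condition, $A(\wc)=\wc$ via $\<\alpha^\vee,\eps_1+\eps_2\>=0$ and $t>0$, and preservation of condition \eqref{eq:defmfm}). Nothing is missing.
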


The next proposition gives  the multiplicity free $\U(2)$-manifold associated to the momentum polytope (\ref{triangles_side_wall}) of \cref{prop:triangles}. In what follows, we will write $e_1, e_2$ for the standard basis of $\C^2$. 
\begin{proposition} \label{prop:mfd_triangles_side_wall}
Let $k \in \Z$, $\ell \in \{-1,1\}$. Let $V$ be the $\U(2)$-representation
\[V := (\C^2 \otimes \C_{\det^{-(k+1)}}) \oplus \C_{\det^{-\ell}} \oplus \C \cong V(\varpi_1 - (k+1)\varpi_2) \oplus V(-\ell \varpi_2)\oplus V(0).\] 
\begin{enumerate}[(a)]
\item  \label{prop:mfd_triangles_side_wall_part_a} The projective space $\PP(V)$, equipped with the Fubini-Study symplectic form and the momentum map $\mupv$ of \cref{ex:mupv}, is a multiplicity free $U(2)$-manifold with trivial principal isotropy group. 
\item \label{prop:mfd_triangles_side_wall_part_b}
The $T$-fixpoints in $\PP(V)$  
are 
\begin{align*}
x_1:=[(e_1\otimes 1)\oplus 0 \oplus 0],\  x_2:=[(e_2\otimes 1)\oplus 0 \oplus 0],\ x_3:=[0\oplus 1 \oplus 0],\ x_4:=[0\oplus 0\oplus 1] 
\end{align*}
and their images under $\mupv$ are (in the same order)
\begin{equation}  \label{prop:mfd_triangles_side_wall_tweights}
k(\varepsilon_1+\varepsilon_2) + \varepsilon_2,\ k(\varepsilon_1+\varepsilon_2) + \varepsilon_1,\ \ell (\varepsilon_1 + \varepsilon_2),\ 0
 \end{equation}

\item \label{prop:mfd_triangles_side_wall_part_c} The  momentum polytope of $(\PP(V),\mupv)$ is the triangle  \[\conv(0,\ell(\varepsilon_1 + \varepsilon_2), k(\varepsilon_1 + \varepsilon_2) + \varepsilon_1)\] in case (\ref{triangles_side_wall}) of \cref{prop:triangles}.
\item \label{prop:mfd_triangles_side_wall_part_d} If $s \in \R, t \in \R_{>0}$, then 
\[\mupv^{s,t} := t\mupv + s(\eps_1 + \eps_2)\] is a momentum map for the symplectic form $t\ompv$ on $\PP(V)$ and the momentum polytope of the multiplicity free $\U(2)$-manifold $(\PP(V),\mupv^{s,t})$ is the triangle  \[s(\varepsilon_1+\varepsilon_2) + t\cdot\conv(0,\ell(\varepsilon_1 + \varepsilon_2), k(\varepsilon_1 + \varepsilon_2) + \varepsilon_1)\] of case (\ref{triangles_side_wall}) in  \cref{prop:triangles} 
\end{enumerate}
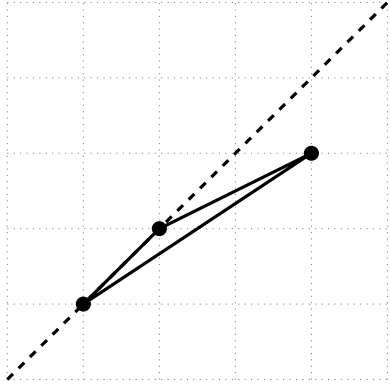
\begin{figure}[ht]
\begin{tikzpicture}
\draw[step=1, dotted, gray] (-1,-1) grid (4,4);

\draw[very thick] (0,0) -- (1,1) -- (3,2) -- (0,0);
\draw[very thick, dashed] (-1,-1)--(4,4);
  \node at (1,1)[circle,fill,inner sep=2pt]{};
  \node at (3,2)[circle,fill,inner sep=2pt]{};
 \node at (0,0)[circle,fill,inner sep=2pt]{};

\end{tikzpicture}
\caption{The triangle in part (\ref{prop:mfd_triangles_side_wall_part_d}) of \cref{prop:mfd_triangles_side_wall} for $k=2$, $\ell=1$. \label{fig:triangle_edge_on_wall}}
\end{figure}
\end{proposition}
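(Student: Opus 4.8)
Throughout I identify $a\eps_1+b\eps_2\in\ft^*$ with the pair $(a,b)$, so that $\wc=\{(a,b):a\ge b\}$ and $s_\alpha$ is the reflection $(a,b)\mapsto(b,a)$ in the wall $\R(\eps_1+\eps_2)$; recall from \cref{ex:mupv} that $\PP(V)$ with the Fubini--Study form and $\mupv$ is a compact connected Hamiltonian $\U(2)$-manifold. The plan is to establish (a)--(c) for $(\PP(V),\mupv)$ directly and then deduce (d) from \cref{lem:stretch_and_shift}. For (a) I would first note $\dim_\R\PP(V)=2(\dim_\C V-1)=6=\dim\U(2)+\rk\U(2)$, so by \cref{rem:mf}(\ref{rem:mf_item_b}) it is enough to check that the principal isotropy group is trivial. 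I would do this by exhibiting one point with trivial stabilizer: for $v=((e_1+e_2)\otimes 1)\oplus 1\oplus 1$, the equation $g\cdot v=\lambda v$ forces $\lambda=1$ from the trivial summand, then $\det g=1$ from the $\C_{\det^{-\ell}}$ summand (as $\ell=\pm1$ and $\det g\in\U(1)$), and finally $g(e_1+e_2)=e_1+e_2$; an element of $\SU(2)$ fixing a nonzero vector $w$ acts as a scalar of determinant $1$ on $(\C w)^\perp$ and is therefore the identity. As the trivial-stabilizer locus is open and the principal (minimal) isotropy type is generic, the principal isotropy group is trivial and $M$ is multiplicity free.

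For (b), the weights of $V$ are $w_1=-k\eps_1-(k+1)\eps_2$, $w_2=-(k+1)\eps_1-k\eps_2$, $w_3=-\ell(\eps_1+\eps_2)$ and $w_4=0$; being pairwise distinct, their weight spaces are the four coordinate lines, so $x_1,\dots,x_4$ are exactly the $T$-fixpoints, and $\mupv(x_i)=-w_i$ by \cref{ex:mupv}, which gives \eqref{prop:mfd_triangles_side_wall_tweights}. Put $O:=\mupv(x_4)=0$, $C:=\mupv(x_3)=\ell(\eps_1+\eps_2)$, $B:=\mupv(x_2)=k(\eps_1+\eps_2)+\eps_1=(k+1,k)$, and note $A:=\mupv(x_1)=(k,k+1)=s_\alpha(B)$. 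Since $O,C,B\in\wc$ and each is the image of a point of $M$, all three lie in $\mop=\mupv(\PP(V))\cap\wc$, so convexity gives $\conv(O,C,B)\subseteq\mop$.

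The reverse inclusion is the heart of the argument, and I expect it to be the main obstacle: one cannot obtain it by intersecting the $T$-momentum polytope $\mop_T=\conv(A,B,C,O)$ with $\wc$, since that set is in general strictly larger than $\mop$. Instead I would apply \cref{thm:sjamaar_mp}(\ref{item_pol_vertex_cone}) at $m=x_2$. Here $K_{x_2}=T$ (a triangular element of $\U(2)$ is diagonal) and $\<\alpha^\vee,B\>=1>0$, so $B$ lies in the interior of $\wc$. Writing $v_2:=e_2\otimes 1$, the weights of $T_{x_2}\PP(V)=\Hom(\C v_2,V/\C v_2)$ are $w_i-w_2$ for $i\ne2$, i.e. $\alpha=w_1-w_2$, $w_3-w_2=(k+1-\ell,k-\ell)$ and $w_4-w_2=(k+1,k)$; the orbit tangent $T_{x_2}(K\cdot x_2)$ is the complex line of weight $\alpha$ (since $\fg_\alpha\cdot v_2=e_1\ne0$), so $\Pi_{x_2}=\{(k+1-\ell,k-\ell),(k+1,k)\}$. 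Hence \cref{thm:sjamaar_mp}(\ref{item_pol_vertex_cone}) gives $\mop\subseteq B-\cone\Pi_{x_2}$, and the two edges of this cone run from $B$ to $C$ (because $B-(k+1-\ell,k-\ell)=C$) and from $B$ to $O$ (because $B-(k+1,k)=O$). Both generators of $\Pi_{x_2}$ have $\<\alpha^\vee,\cdot\>=1$, so moving outward from $C$ and from $O$ along these edges drives $\<\alpha^\vee,\cdot\>$ below $0$; since the segment $[O,C]$ lies on the wall, the cone meets $\wc$ exactly in the triangle $\conv(O,C,B)$. Thus $\mop\subseteq\conv(O,C,B)$, and combined with the lower bound this proves (c).

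Finally, (d) is immediate from \cref{lem:stretch_and_shift} applied to $(\PP(V),\ompv,\mupv)$: for $s\in\R$, $t\in\R_{>0}$ the map $\mupv^{s,t}=t\mupv+s(\eps_1+\eps_2)$ is a momentum map for $t\ompv$, the resulting manifold is again multiplicity free, and its momentum polytope is $s(\eps_1+\eps_2)+t\cdot\conv(O,C,B)$, which is precisely the triangle of case (\ref{triangles_side_wall}) of \cref{prop:triangles}.
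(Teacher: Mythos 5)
Your proposal is correct and follows essentially the same route as the paper: trivial stabilizer at an explicit point plus the dimension count \eqref{eq:dim_mf} for (a), distinctness of the four $T$-weights and $\mupv([v])=-\lambda$ for (b), Sjamaar's cone theorem (\cref{thm:sjamaar_mp}(\ref{item_pol_vertex_cone})) applied at $x_2$ with the same slice weights $\Pi_{x_2}$ for (c), and \cref{lem:stretch_and_shift} for (d). The only differences are cosmetic: you use the witness point $((e_1+e_2)\otimes 1)\oplus 1\oplus 1$ instead of $(e_1\otimes 1)\oplus 1\oplus 1$, and in (c) you make the inclusion $\conv(O,C,B)\subseteq\mop$ explicit via convexity of $\mop$ and verify directly that the Sjamaar cone meets $\wc$ in exactly the triangle, where the paper instead invokes \cref{thm:sjamaar_mp}(\ref{item_pol_vertex_interior}) to rule out other interior vertices.
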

\begin{proof}
We begin with (\ref{prop:mfd_triangles_side_wall_part_a}). It follows from \cref{ex:mupv} that $(\PP(V),\mupv)$ is a Hamiltonian $\U(2)$-manifold. A computation shows that the only element of $\U(2)$ that fixes $$[(e_1 \otimes 1) \oplus 1 \oplus 1] \in \PP(V)$$ is the identity, which implies that the principal isotropy group of the $U(2)$-action on $\PP(V)$ is trivial. Since $\PP(V)$ is compact and connected, it follows from  \cref{eq:dim_mf} that $(\PP(V),\mupv)$ is multiplicity free.

To show (\ref{prop:mfd_triangles_side_wall_part_b}) we  first observe that all the $T$-weight spaces in $V$  have dimension $1$. This implies that the $T$-fixpoints in $\PP(V)$ are exactly the lines spanned by $T$-eigenvectors in $V$, which shows the first assertion in (\ref{prop:mfd_triangles_side_wall_part_b}). It follows that $\mu(x_i) \in \ft^* \cong (\fk^*)^T$ for every $i \in \{1,2,3,4\}$. We now use \cref{ex:mupv} to compute $\mupv(x_1)$. Let $\xi \in \ft$. Since $\mathsf{v}:=e_1\otimes 1 \oplus 0 \oplus 0 \in V$ has $T$-weight $\gamma:=\varepsilon_1-(k+1)(\varepsilon_1+\varepsilon_2)$ we have
$\xi\cdot \mathsf(v) = 2\pi\sqrt{-1}\gamma(\xi)\mathsf{v}$ which implies that
$\mupv(x_1)(\xi) = -\gamma(\xi)$, that is, $\mupv(x) = -\gamma$, as claimed. The computations of $\mupv(x_2)$, $\mupv(x_3)$ and $\mupv(x_4)$ are analogous. 

 We turn to (\ref{prop:mfd_triangles_side_wall_part_c}). 
Since  \[\mupv(x_2)=k(\varepsilon_1+\varepsilon_2)+\varepsilon_1\] is the only weight in \eqref{prop:mfd_triangles_side_wall_tweights} that belongs to the interior of $\ft_+$, it is the only vertex of $\mop(M)$ in the interior of $\ft_+$, thanks to \cref{thm:sjamaar_mp}(\ref{item_pol_vertex_interior}). 

In order to apply  part (\ref{item_pol_vertex_cone}) of \cref{thm:sjamaar_mp}, we next show that the $T$-weights in the symplectic slice $N_{x_2}$ of $\PP(V)$ at $x_2$ are 
\[\Pi_{x_2} = \{k(\varepsilon_1+\varepsilon_2)+\varepsilon_1, (k-\ell)(\varepsilon_1+\varepsilon_2) + \varepsilon_1\}.\] 
Indeed, as $\PP(V)$ comes with an invariant complex structure which is compatible with its Fubini-Study symplectic form by construction, we have the following isomorphisms of $T$-modules  
\begin{multline*}
N_{x_2} \cong T_{x_2} \PP(V) / T_{x_2} (K\cdot x_2) =T_{x_2}\PP(V) / T_{x_2} \PP(\C^2 \otimes \C_{\det^{-(k+1)}}\oplus 0 \oplus 0)\\ \cong \C_{(k-\ell)(\varepsilon_1+\varepsilon_2) + \varepsilon_1} \oplus \C_{k(\varepsilon_1+\varepsilon_2)+\varepsilon_1}.\end{multline*}
Since the extremal rays 
\[\mupv(x_2) - \Rp(k(\varepsilon_1+\varepsilon_2)+\varepsilon_1) \quad \text{ and }  \mupv(x_2) - \Rp((k-\ell)(\varepsilon_1+\varepsilon_2)+\varepsilon_1)\]
of the cone $\mupv(x_2) - \operatorname{cone}\Pi_{x_2}$
intersect the wall of  the Weyl chamber $\ft_+$ in the points $0$ and  $\ell(\varepsilon_1+\varepsilon_2)$
it  follows from  part (\ref{item_pol_vertex_cone}) of \cref{thm:sjamaar_mp} that 
\[\mop(\PP(V)) = \conv(0,\ell(\varepsilon_1+\varepsilon_2), k(\varepsilon_1+\varepsilon_2)+\varepsilon_1)\]
as claimed.

Part (\ref{prop:mfd_triangles_side_wall_part_d}) follows from part (\ref{prop:mfd_triangles_side_wall_part_c}) and \cref{lem:stretch_and_shift}. 
\end{proof}

We now describe  the multiplicity free $\U(2)$-manifold associated to the momentum polytope (\ref{triangles_halfright_1}) of \cref{prop:triangles}. Recall from \cref{ex:u2} that $\alpha = \eps_1-\eps_2$ is the simple root of $\U(2)$ and $\GL(2)$. 
\begin{proposition} \label{prop:mfd_triangles_halfright_1}
Let $j \in \N$ and set 
\[M = \hfb{\GL(2)}{B^-}{\PP(\C^2 \oplus \C_{-j\alpha})}\]
where the group $B^{-}$ of lower triangular matrices in $\GL(2)$ acts on $\PP(\C^2 \oplus \C_{-j\alpha})$ through the standard linear action of $\GL(2)$ on $\C^2$ and with weight $-j\alpha$ on the $1$-dimensional space $\C_{-j\alpha}$.
\begin{enumerate}[(a)] 
\item The map
\[\hfb{\U(2)}{T}{\PP(\C^2 \oplus \C_{-j(\eps_1-\eps_2)})} \to M, [g,[y]] \mapsto [g,[y]]\]
is a $\U(2)$-equivariant diffeomorphism.  \label{prop:mfd_triangles_halfright_1_item_U2}
\item \label{prop:mfd_triangles_halfright_1_item1} Let $V$ be the irreducible $\GL(2)$-representation with highest  weight $j\alpha$ and let $\sv \in V$ be a lowest weight vector in $V$. Then
\[\iota_M: M \to Y:= \hfb{\GL(2)}{B^-}{\PP(\C^2 \oplus V)}, [g,[u\oplus z]] \to [g,[u \oplus z\sv]]\] is a $\GL(2)$-equivariant closed embedding and 
\[\iota_Y : Y \to \PP(\C^2) \times \PP(\C^2 \oplus V), [g,[u \oplus v]] \mapsto ([ge_2],[gu \oplus gv])\] is a $\GL(2)$-equivariant isomorphism of varieties. 
\item \label{prop:mfd_triangles_halfright_1_item2} Let $\om_1$ be the Fubini-Study symplectic form on $\PP(\C^2)$ and
\[\mu_1: \PP(\C^2) \to \fu(2)^*\] the associated momentum map as in \cref{ex:mupv}, $\om_2$  the  Fubini-Study symplectic form on $\PP(\C^2\oplus V)$  and  \(\mu_2: \PP(\C^2\oplus V) \to \fu(2)^*\) the associated momentum map. If $\om_M$ is the pullback  along $\iota_Y \circ \iota_M$ of the symplectic form $\om_1 + \om_2$ on $\PP(\C^2) \times \PP(\C^2 \oplus V)$ then $\om_M$ is a symplectic form on $M$ with momentum map 
\[\mu_M = (\mu_1+\mu_2) \circ \iota_Y \circ \iota_M\]
and $(M,\mu_M)$ is a multiplicity free $\U(2)$-manifold with trivial principal isotropy group. 
\item \label{prop:mfd_triangles_halfright_1_item3} Set $n := \begin{pmatrix}
0 & 1 \\ 1 & 0
\end{pmatrix} \in \U(2)$. The $T$-fixpoints in $M$ are 
\begin{align*}
& x_1:=[e,[1:0:0]],\  x_2:=[n,[1:0:0]],\ x_3:=[e,[0:1:0]],\\ &x_4:=[n,[0:1:0]],\ x_5:=[e,[0:0:1]],\ x_6:=[n,[0:0:1]] \end{align*}
and their images under $\mu_M$ are (in the same order)
\begin{equation} \label{eq:weights_toric_fixpoints_halfright}
-\eps_1 -\eps_2,\ -\eps_1 -\eps_2,\ 
-2\eps_2,\ -2\eps_1,\  j\alpha - \eps_2,\  -j\alpha - \eps_1.
\end{equation}
\item \label{prop:mfd_triangles_halfright_1_item4} The momentum polytope of $(M, \mu_M)$ is the triangle $(-\eps_1-\eps_2)+\conv(0, \alpha, j\alpha +\eps_1)$ in case (\ref{triangles_halfright_1}) of \cref{prop:triangles}
\item \label{prop:mfd_triangles_halfright_1_item5} If $s \in \R, t \in \R_{>0}$, then 
\[\mu_M^{s,t} := t\mu_M + (s+t)(\eps_1 + \eps_2)\] is a momentum map for the symplectic form $t\omega_M$ on $M$. The momentum polytope of the multiplicity free $\U(2)$-manifold $(M,\mu_M^{s,t})$ is the triangle  \[s(\varepsilon_1+\varepsilon_2) + t\cdot\conv(0,\alpha, j\alpha+\varepsilon_1)\] of case (\ref{triangles_halfright_1}) in  \cref{prop:triangles} 
\end{enumerate}
\begin{figure}[ht]
\begin{tikzpicture}
\draw[step=1, dotted, gray] (-2,-2) grid (2,2);

\draw[very thick] (0,0) -- (1,-1) -- (1,0) -- (0,0);
\draw[very thick, dashed] (-2,-2)--(2,2);
  \node at (1,-1)[circle,fill,inner sep=2pt]{};
  \node at (1,0)[circle,fill,inner sep=2pt]{};
 \node at (0,0)[circle,fill,inner sep=2pt]{};

\end{tikzpicture}
\begin{tikzpicture}
\draw[step=1, dotted, gray] (-2,-2) grid (3,2);

\draw[very thick] (0,0) -- (1,-1) -- (2,-1) -- (0,0);
\draw[very thick, dashed] (-2,-2)--(2,2);
  \node at (1,-1)[circle,fill,inner sep=2pt]{};
  \node at (2,-1)[circle,fill,inner sep=2pt]{};
 \node at (0,0)[circle,fill,inner sep=2pt]{};

\end{tikzpicture}
\begin{tikzpicture}
\draw[step=1, dotted, gray] (-1,-4) grid (5,1);

\draw[very thick] (0,0) -- (1,-1) -- (4,-3) -- (0,0);
\draw[very thick, dashed] (-1,-1)--(1,1);
  \node at (1,-1)[circle,fill,inner sep=2pt]{};
  \node at (4,-3)[circle,fill,inner sep=2pt]{};
 \node at (0,0)[circle,fill,inner sep=2pt]{};

\end{tikzpicture}
\caption{The triangle in part (\ref{prop:mfd_triangles_halfright_1_item4}) of \cref{prop:mfd_triangles_halfright_1} for $j=0$, $j=1$  and $j=3$.}
\end{figure}
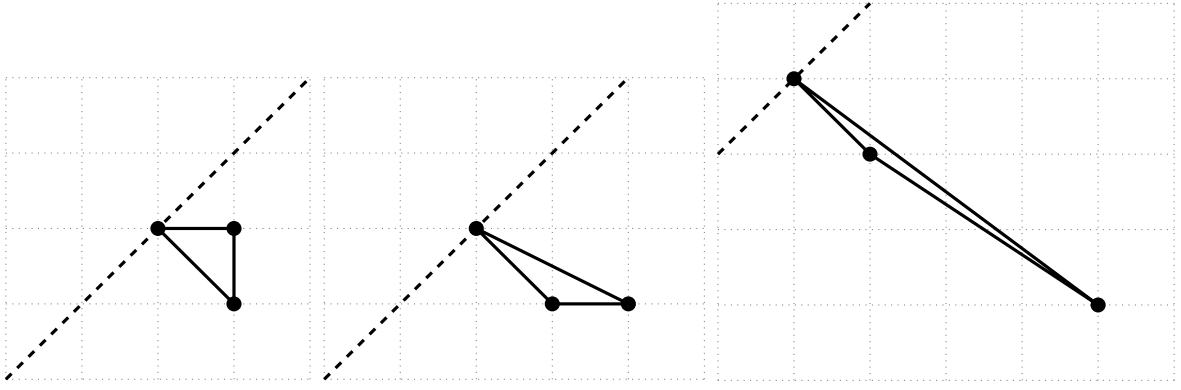
\end{proposition}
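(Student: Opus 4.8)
The plan is to establish the six parts (a)--(f) in order, using the homogeneous fiber bundle formalism for (a)--(d), Sjamaar's \cref{thm:sjamaar_mp} for the decisive part (e), and \cref{lem:stretch_and_shift} for (f). Part (a) is immediate from \cref{prop:hfb_comparison} applied to the smooth projective---hence quasi-projective---$B^-$-variety $F=\PP(\C^2\oplus\C_{-j\alpha})$. For part (b) I would first note that $\sv$ spans the lowest weight line of $V$; since $B^-$ is the Borel opposite to the one defining highest weights, this line is $B^-$-stable and $B^-$ acts on it through the character $-j\alpha$. Thus $z\mapsto z\sv$ is a $B^-$-equivariant linear embedding $\C_{-j\alpha}\into V$, and applying the functor $F\mapsto\hfb{\GL(2)}{B^-}{F}$ to the induced $B^-$-equivariant closed embedding $\PP(\C^2\oplus\C_{-j\alpha})\into\PP(\C^2\oplus V)$ (a fiberwise closed embedding over the same base) yields $\iota_M$. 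For $\iota_Y$ I would use that $B^-$ is exactly the $\GL(2)$-stabilizer of the line $[e_2]$, so $gB^-\mapsto[ge_2]$ identifies $\GL(2)/B^-$ with $\PP(\C^2)$; then $\iota_Y$ is the associated-bundle isomorphism attached to the $\GL(2)$-equivariant first projection $\PP(\C^2)\times\PP(\C^2\oplus V)\to\PP(\C^2)$, via \cref{prop:hfb_diff}(\ref{prop:hfb_diff_item_diffeo}) and its algebraic counterpart, with explicit inverse $([L],[w])\mapsto[g,[g^{-1}w]]$ for any $g$ with $ge_2\in L$.

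For part (c), $\iota_Y\circ\iota_M$ is a morphism of $\GL(2)$-varieties, hence by \cref{prop:analytic} a $\U(2)$-equivariant holomorphic closed embedding of $M$ onto a complex submanifold of $\PP(\C^2)\times\PP(\C^2\oplus V)$. Since $\om_1+\om_2$ is the K\"ahler form of a $\U(2)$-invariant K\"ahler metric on the product, its pullback $\om_M$ is the K\"ahler form of the induced metric, in particular symplectic; and $\mu_M$ is a momentum map because the restriction of a momentum map to an invariant symplectic submanifold is again one. To see that $(M,\mu_M)$ is multiplicity free with trivial principal isotropy, I would pass to the model of part (a) and compute that a generic point $[e,[u_1:u_2:z]]$ with $u_1u_2z\neq0$ has as $\U(2)$-isotropy the $T$-stabilizer of its fiber point, which is trivial; since $\dim_\R M=\dim(\U(2)/T)+\dim_\R\PP^2=6=\dim\U(2)+\rk\U(2)$, \cref{rem:mf}(\ref{rem:mf_item_b}) via \eqref{eq:dim_mf} gives multiplicity freeness. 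For part (d), every $T$-fixpoint projects under $p$ to one of the two $T$-fixpoints $[e_1],[e_2]$ of $\PP(\C^2)=\GL(2)/B^-$, i.e.\ lies over $eB^-$ or $nB^-$; as all fiber $T$-weight spaces are one-dimensional, the fixpoints in each fiber are the three coordinate lines, giving exactly $x_1,\dots,x_6$. Their images follow by applying $\iota_Y\circ\iota_M$ and reading $\mupv([v])=-(\text{weight of }v)$, using that $\sv$ has weight $-j\alpha$ and $n\sv$ weight $j\alpha$.

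The main obstacle is part (e), where the cheap approach fails: writing $A=-\eps_1-\eps_2$, $B=-2\eps_2$, $D=j\alpha-\eps_2$ for the three candidate vertices, one finds that intersecting the $T$-momentum polytope (the convex hull of the $\mu_T(x_i)$, related to $\mop$ by \cref{prop:convexhullweylorbit}) with $\ft_+$ produces a \emph{quadrilateral} strictly larger than $\conv(A,B,D)$, so the polytope cannot be obtained by naive intersection with the chamber. Instead I would determine $\mop$ locally at its two interior vertices. One checks $K_{x_3}=K_{x_5}=T$ and that $B=\mu_M(x_3)$, $D=\mu_M(x_5)$ lie in the interior of $\ft_+$. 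At each such point the tangent space, read in the bundle $\hfb{\GL(2)}{B^-}{\PP^2}$, splits into a horizontal summand (tangent to the base $\GL(2)/B^-$, of $T$-weight $\alpha$, which is exactly the orbit direction) and a vertical summand (the fiber); \cref{thm:sjamaar_mp}(\ref{item_pol_vertex_cone}) then gives the two-weight symplectic slice, hence a two-dimensional cone at $B$ and at $D$, making each a genuine vertex. The computation shows the edges at $B$ point toward $A$ and toward $D$, and the edges at $D$ toward $A$ and toward $B$. Since $\mop\subseteq\ft_+$ and $A$ lies on the wall, each edge directed toward $A$ must terminate there (any continuation past $A$ leaves the chamber), so $\mop=\conv(A,B,D)=(-\eps_1-\eps_2)+\conv(0,\alpha,j\alpha+\eps_1)$; as a consistency check, the two fixpoints $x_1,x_2$ over $A$ match the type \eqref{eq:alpha_cone_1} description in \cref{rem:fixpoints}(\ref{fixponts_halfrefl}).

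Finally, part (f) is a direct application of \cref{lem:stretch_and_shift} with stretching factor $t$ and shift $s+t$: because $\mop(M)$ from part (e) is based at $A=-\eps_1-\eps_2$, the extra summand $t(\eps_1+\eps_2)$ in $\mu_M^{s,t}$ recenters it at the origin, producing $s(\eps_1+\eps_2)+t\cdot\conv(0,\alpha,j\alpha+\eps_1)$.
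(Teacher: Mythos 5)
Your proposal is correct and follows essentially the same route as the paper's proof: parts (a), (b), (c), (d), (f) are handled, exactly as in the paper, by \cref{prop:hfb_comparison}, the $B^-$-equivariant inclusion $\C_{-j\alpha}\into V$ plus the standard identification of $\hfb{\GL(2)}{B^-}{\PP(\C^2\oplus V)}$ with the product, the trivial-isotropy-at-a-generic-point argument combined with the dimension criterion \eqref{eq:dim_mf}, the fixpoint/weight computation of \cref{ex:mupv}, and \cref{lem:stretch_and_shift}; and your part (e) is the paper's argument as well, namely \cref{thm:sjamaar_mp}(\ref{item_pol_vertex_interior}) limits interior vertices to $\mu_M(x_3)$ and $\mu_M(x_5)$, whose symplectic slices (the fiber directions, the orbit direction being horizontal) have linearly independent weights, so \cref{thm:sjamaar_mp}(\ref{item_pol_vertex_cone}) produces pointed cones at these two points that pin $\mop(M)$ down to the asserted triangle. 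Your additional observation that intersecting $\mop_T(M)$ with the Weyl chamber yields a strictly larger quadrilateral is a nice sanity check not made explicit in the paper, but it does not alter the method.
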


\begin{proof}
Part (\ref{prop:mfd_triangles_halfright_1_item_U2}) is just an application of \cref{prop:hfb_comparison}. 

We proceed to assertion (\ref{prop:mfd_triangles_halfright_1_item1}). The assertion about $\iota_M$ follows from the fact that $\C_{-j\alpha} \to V: z \mapsto z\mathsf{v}$ is a $B^{-}$-equivariant injective linear map. The claim about $\iota_Y$ is a standard fact; see, e.g., \cite[Lemma 2.3]{timashev-embbook}. 

The assertion in (\ref{prop:mfd_triangles_halfright_1_item2}) that $(M,\om_M, \mu_M)$ is a Hamiltonian $\U(2)$-manifold follows from standard and well-known facts about Hamiltonian actions. Furthermore, a straightforward computation shows that the isotropy group $\U(2)_x$ of (for example) $x = [e,[1:1:1]] \in M$ is trivial, which implies that the principal isotropy group is trivial as well. It now follows from  \cref{eq:dim_mf} and from the fact that $M$ is compact and connected, that $M$ is a multiplicity free. 

To prove (\ref{prop:mfd_triangles_halfright_1_item3}) we will use \cref{ex:mupv}. A straightforward calculation shows that the listed points  are the six $T$-fixpoints in $M$. It follows that their images under $\mu_M$ lie in $\ft^* \cong (\fk^*)^T$. Let $\xi \in \ft$. We begin by computing $\mu_M(x_1)(\xi)$. 
First off, \[\iota_Y \circ \iota_M (x_1) = ([e_2], [e_1 \oplus 0]).\] Since $e_2$ has weight $\eps_2$ and $e_1$ has weight $\eps_1$, we have $\xi \cdot e_2 = 2\pi\sqrt{-1}\eps_2(\xi)e_2$ and $\xi\cdot (e_1 \oplus 0) = 2\pi\sqrt{-1}\eps_1(\xi)(e_1 \oplus 0)$ which implies that 
\[\mu_1([e_2])(\xi) = -\eps_2(\xi) \text{ and }\mu_2([e_1 \oplus 0])(\xi) = -\eps_1(\xi).\]
The claimed equality $\mu_M (x_1) = -\eps_1-\eps_2$ follows. 

Similar elementary computations yield the images of $x_2$ through $x_6$ under $\mu_M$, using 
\begin{align*}
& \iota_Y \circ \iota_M(x_2) = ([e_1],[e_2\oplus 0]),\ \iota_Y \circ \iota_M(x_3) = ([e_2],[e_2\oplus 0]),\ \iota_Y \circ \iota_M(x_4) = ([e_1],[e_1\oplus 0]), \\
& \iota_Y \circ \iota_M(x_5) = ([e_2],[0\oplus \sv]),\ \iota_Y \circ \iota_M(x_6) = ([e_1],[0\oplus n  \sv]) 
\end{align*}
and
\[\xi \cdot \sv = 2\pi\sqrt{-1}(-j\alpha)(\xi)\sv,\  \xi \cdot n\sv = 2\pi\sqrt{-1}(j\alpha)(\xi)n\sv,\]
which hold because $\sv$ has weight $-j\alpha$ and $n\sv$ has weight $j\alpha$.

 We turn to (\ref{prop:mfd_triangles_halfright_1_item4}). 
Since  \[\mathsf{u}:= \mu_M(x_3)=-2\eps_2 \text{ and } \mathsf{w}:= \mu_M(x_5) = j\alpha - \eps_2\] are the only weights in \eqref{eq:weights_toric_fixpoints_halfright} that belong to the interior of $\ft_+$, they are the only possible vertices of $\mop(M)$ in the interior of $\ft_+$, thanks to \cref{thm:sjamaar_mp}(\ref{item_pol_vertex_interior}). 
In order to apply  part (\ref{item_pol_vertex_cone}) of \cref{thm:sjamaar_mp}, we next show that the $T$-weights in the symplectic slice $N_{x_3}$ of $M$ at $x_3$ are 
\[\Pi_{x_3} = \{\alpha, -j\alpha - \varepsilon_2\} \] whereas those in the symplectic slice $N_{x_5}$ at $x_5$ are
\[\Pi_{x_5} = \{j\alpha + \varepsilon_1, j\alpha + \varepsilon_2\}.\]
Indeed, as $M$ comes with an invariant complex structure which is compatible with $\omega_M$ by construction, we have the the following isomorphisms of $T$-modules
\begin{align*}N_{x_3}& \cong T_{x_3} M / T_{x_3} (K\cdot x_3) \cong T_{[0:1:0]} \PP(\C^2 \oplus \C_{-j\alpha}) \cong \C_{\varepsilon_1 - \varepsilon_2} \oplus \C_{-j\alpha - \varepsilon_2} \\
N_{x_5}& \cong T_{[0:0:1]} \PP(\C^2 \oplus \C_{-j\alpha}) \cong \C_{\varepsilon_1 +j\alpha} \oplus \C_{\varepsilon_2+j\alpha} 
\end{align*}
Since the two weights in $\Pi_{x_3}$ are linearly independent,  \cref{thm:sjamaar_mp}(\ref{item_pol_vertex_cone}) implies that $x_3$ is a vertex of $\mop(M)$, and the same holds for $x_5$. 
As \[\mathsf{w} -(j\alpha + \varepsilon_2) = \mathsf{u} \quad \text{ and } \quad \mathsf{u} - \alpha = \mathsf{w} - (j\alpha + \varepsilon_1) = -\varepsilon_1 - \varepsilon_2\] it also follows from  part (\ref{item_pol_vertex_cone}) of \cref{thm:sjamaar_mp} that $-\varepsilon_1 - \varepsilon_2$ is the only remaining vertex of $\mop(M)$ and we have proven that
\[\mop(M) = \conv(-\varepsilon_1 - \varepsilon_2, \mathsf{u}, \mathsf{w}),\]
as required.

Finally, assertion (\ref{prop:mfd_triangles_halfright_1_item5}) follows from \cref{lem:stretch_and_shift}.
\end{proof}

With proofs similar to that of \cref{prop:mfd_triangles_halfright_1}, one establishes the following descriptions of the $U(2)$-manifolds associated to the triangles (\ref{triangles_halfright_2}) and (\ref{triangles_delzant}) of \cref{prop:triangles}.

\begin{proposition} \label{prop:mfd_triangles_halfright_2}
Let $j \in \N$ and set 
\[M = \hfb{\GL(2)}{B^-}{\PP((\C^2)^* \oplus \C_{-j\alpha})}\]
where the group $B^{-}$ of lower triangular matrices in $\GL(2)$ acts on $\PP((\C^2)^* \oplus \C_{-j\alpha})$ through the linear action of $\GL(2)$ on $(\C^2)^*$ dual to the standard action on $\C^2$ and with weight $-j\alpha$ on the $1$-dimensional space $\C_{-j\alpha}$.
\begin{enumerate}[(a)] 
\item The map
\[\hfb{\U(2)}{T}{\PP((\C^2)^* \oplus \C_{-j(\eps_1-\eps_2)})} \to M, [g,[y]] \mapsto [g,[y]]\]
is a $\U(2)$-equivariant diffeomorphism.  \label{prop:mfd_triangles_halfright_2_item_U2}
\item \label{prop:mfd_triangles_halfright_2_item1} Let $V$ be the irreducible $\GL(2)$-representation with highest  weight $j\alpha$ and let $\sv \in V$ be a lowest weight vector in $V$. Then
\[j_M: M \to \PP(\C^2) \times \PP((\C^2)^* \oplus V), [g,[u\oplus z]] \mapsto ([ge_2],[gu \oplus gz\mathsf{v}])\] is a $\GL(2)$-equivariant closed embedding. 
\item \label{prop:mfd_triangles_halfright_2_item2} Let $\om_1$ be the Fubini-Study symplectic form on $\PP(\C^2)$ and
\[\mu_1: \PP(\C^2) \to \fu(2)^*\] the associated momentum map as in \cref{ex:mupv}, $\om_2$  the  Fubini-Study symplectic form on $\PP((\C^2)^*\oplus V)$  and  \(\mu_2: \PP(\C^2\oplus V) \to \fu(2)^*\) the associated momentum map. If $\om_M$ is the pullback  along $j_M$ of the symplectic form $\om_1 + \om_2$ on $\PP(\C^2) \times \PP((\C^2)^* \oplus V)$ then $\om_M$ is a symplectic form on $M$ with momentum map 
\[\mu_M = (\mu_1+\mu_2) \circ j_M\]
and $(M,\mu_M)$ is a multiplicity free $\U(2)$-manifold with trivial principal isotropy group. 
\item \label{prop:mfd_triangles_halfright_2_item4} The momentum polytope of $(M, \mu_M)$ is the triangle $\conv(0, \alpha, j\alpha -\eps_2)$ in case (\ref{triangles_halfright_2}) of \cref{prop:triangles}
\item \label{prop:mfd_triangles_halfright_2_item5} If $s \in \R, t \in \R_{>0}$, then 
\[\mu_M^{s,t} := t\mu_M + s(\eps_1 + \eps_2)\] is a momentum map for the symplectic form $t\omega_M$ on $M$. The momentum polytope of the multiplicity free $\U(2)$-manifold $(M,\mu_M^{s,t})$ is the triangle \[s(\eps_1 + \eps_2)+t\cdot\conv(0, \alpha, j\alpha -\eps_2)\] in case (\ref{triangles_halfright_2}) of \cref{prop:triangles}.
\end{enumerate}
\begin{figure}[h]
\begin{tikzpicture}
\draw[step=1, dotted, gray] (-1,-5) grid (4,1);

\draw[very thick] (0,0) -- (1,-1) -- (3,-4) -- (0,0);
\draw[very thick, dashed] (-1,-1)--(1,1);
  \node at (1,-1)[circle,fill,inner sep=2pt]{};
  \node at (3,-4)[circle,fill,inner sep=2pt]{};
 \node at (0,0)[circle,fill,inner sep=2pt]{};

\end{tikzpicture}
\caption{The triangle in part (\ref{prop:mfd_triangles_halfright_2_item4}) of \cref{prop:mfd_triangles_halfright_2} for $j=3$.}
\end{figure}

\end{proposition}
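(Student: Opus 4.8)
The plan is to follow the proof of \cref{prop:mfd_triangles_halfright_1} almost verbatim, tracking the effect of replacing the defining representation $\C^2$ by its dual. Part (\ref{prop:mfd_triangles_halfright_2_item_U2}) is again an immediate application of \cref{prop:hfb_comparison}. For part (\ref{prop:mfd_triangles_halfright_2_item1}), I would first observe that $\C_{-j\alpha}\to V,\ z\mapsto z\sv$ is a $B^-$-equivariant injective linear map: a lowest weight vector of the irreducible $\GL(2)$-representation $V$ of highest weight $j\alpha$ has weight $s_\alpha(j\alpha)=-j\alpha$ and spans a $B^-$-stable line. This exhibits $M$ as a $\GL(2)$-invariant closed subvariety of $\hfb{\GL(2)}{B^-}{\PP((\C^2)^*\oplus V)}$, which is in turn $\GL(2)$-equivariantly isomorphic to $\PP(\C^2)\times\PP((\C^2)^*\oplus V)$ via the standard untwisting map $[g,[w]]\mapsto([ge_2],[gw])$ (see \cite[Lemma 2.3]{timashev-embbook}); composing yields $j_M$. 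Part (\ref{prop:mfd_triangles_halfright_2_item2}) then follows from the same standard facts about Hamiltonian actions as before, together with the one-line verification that $[e,[1:1:1]]\in M$ has trivial $\U(2)$-stabilizer, so that the principal isotropy group is trivial and \cref{eq:dim_mf} makes $(M,\mu_M)$ multiplicity free.

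The heart of the argument is part (\ref{prop:mfd_triangles_halfright_2_item4}), which I would prove exactly as in \cref{prop:mfd_triangles_halfright_1} by means of \cref{thm:sjamaar_mp}. Setting $n=\begin{pmatrix}0&1\\1&0\end{pmatrix}$, the six $T$-fixpoints of $M$ are the points $[e,\cdot]$ and $[n,\cdot]$ lying over the two $T$-fixpoints of $\GL(2)/B^-\cong\PP(\C^2)$ and over the three coordinate points of the fiber. Computing their $\mu_M$-images through $j_M$ and \cref{ex:mupv}, using that the relevant weights are $-\eps_1,-\eps_2$ on $(\C^2)^*$ and $-j\alpha,\ j\alpha$ on $\sv,\ n\sv$, gives the values $\alpha,\ -\alpha,\ 0,\ 0,\ j\alpha-\eps_2,\ -j\alpha-\eps_1$. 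Only $\alpha$ and $j\alpha-\eps_2$ lie in the interior of $\wc$, so by \cref{thm:sjamaar_mp}(\ref{item_pol_vertex_interior}) these are the only possible vertices of $\mop(M)$ in that interior. Next, using the compatible invariant complex structure and the identification $N_{[e,y_0]}\cong T_{y_0}$(fiber) valid at a $T$-fixpoint $y_0$ of the fiber, I would compute the symplectic slices as $T$-modules, obtaining the weight sets $\Pi=\{\alpha,\ \eps_1-j\alpha\}$ at the point over $\alpha$ and $\Pi'=\{j\alpha-\eps_1,\ j\alpha-\eps_2\}$ at the point over $j\alpha-\eps_2$. Each pair is linearly independent, so \cref{thm:sjamaar_mp}(\ref{item_pol_vertex_cone}) confirms both points are vertices and gives the spanning cones $\alpha-\cone\Pi$ and $(j\alpha-\eps_2)-\cone\Pi'$. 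Finally $\alpha-\alpha=0=(j\alpha-\eps_2)-(j\alpha-\eps_2)$ and $\alpha-(\eps_1-j\alpha)=j\alpha-\eps_2$ show that the third vertex is $0$ on the wall, whence $\mop(M)=\conv(0,\alpha,j\alpha-\eps_2)$. Part (\ref{prop:mfd_triangles_halfright_2_item5}) is then immediate from \cref{lem:stretch_and_shift}.

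The only genuine care required, and the place where this case differs from \cref{prop:mfd_triangles_halfright_1}, is the weight bookkeeping: passing to $(\C^2)^*$ flips the signs of the fiber weights and thereby changes which $T$-fixpoint images land in the interior of $\wc$ and what the slice weights are. I therefore expect the main (though still routine) obstacle to be checking that the two slice cones close up precisely to the asserted triangle $\conv(0,\alpha,j\alpha-\eps_2)$ rather than to a reflected or otherwise differently-shaped polygon; the sign computations above are exactly what guarantee this. Everything else transfers \emph{mutatis mutandis} from the earlier proof.
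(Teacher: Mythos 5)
Your proposal is correct and takes essentially the same approach as the paper: the paper's own proof of this proposition is literally the remark that it follows ``with proofs similar to that of \cref{prop:mfd_triangles_halfright_1},'' and your adaptation supplies exactly the intended details. In particular, your weight bookkeeping checks out — the $T$-fixpoint images $\alpha, -\alpha, 0, 0, j\alpha-\eps_2, -j\alpha-\eps_1$, the slice weight sets $\{\alpha, \eps_1-j\alpha\}$ and $\{j\alpha-\eps_1, j\alpha-\eps_2\}$, and the closing-up of the two slice cones at $0$ are all correct, so the argument goes through as you describe.
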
 

\begin{proposition} \label{prop:mfd_triangles_delzant}
Let $a_1,b_1,a_2,b_2\in \Z$ with $\det\begin{pmatrix}
a_1 & a_2 \\ b_1 & b_2
\end{pmatrix} =  1$ and $a_i+b_i \geq 0$ for each $i \in \{1,2\}$. Set $\delta_1 = a_1(-\eps_2)+b_1\eps_1$, $\delta_2 = a_2(-\eps_2)+b_2\eps_1$ and
\[M = \hfb{\GL(2)}{B^-}{\PP(\C \oplus \C_{-\delta_1} \oplus \C_{-\delta_2})}\]
where the group $B^{-}$ of lower triangular matrices in $\GL(2)$ acts on $\PP(\C \oplus \C_{-\delta_1} \oplus \C_{-\delta_2})$ through its linear action with weight $0$,  $-\delta_1$ and  $-\delta_2$ on the $1$-dimensional spaces $\C, \C_{-\delta_1}$ and  $\C_{-\delta_2}$, respectively.
\begin{enumerate}[(a)] 
\item The map
\[\hfb{\U(2)}{T}{\PP(\C \oplus \C_{-\delta_1} \oplus \C_{-\delta_2})} \to M, [g,[y]] \mapsto [g,[y]]\]
is a $\U(2)$-equivariant diffeomorphism.  \label{prop:mfd_triangles_delzant_item_U2}
\item \label{prop:mfd_triangles_delzant_item1} For $i \in \{1,2\}$, let $V_i$ be the irreducible $\GL(2)$-representation with lowest  weight $-\delta_i$ and let $\sv_i$ be a lowest weight vector in $V_i$. Then
\[j_M: M \to \PP(\C^2) \times \PP(\C \oplus V_1 \oplus V_2), [g,[z_0\oplus z_1\oplus z_2]] \mapsto ([ge_2],[z_0 \oplus gz_1\sv_1 \oplus gz_2\sv_2])\] is a $\GL(2)$-equivariant closed embedding. 
\item \label{prop:mfd_triangles_delzant_item2} Let $c \in \R_{>0}$. We  write $\om_1$ for the Fubini-Study symplectic form on $\PP(\C^2)$ and
\[\mu_1: \PP(\C^2) \to \fu(2)^*\] for the associated momentum map as in \cref{ex:mupv}, $\om_2$  for the  Fubini-Study symplectic form on $\PP(\C\oplus V_1 \oplus V_2)$  and  \(\mu_2: \PP(\C\oplus V_1 \oplus V_2) \to \fu(2)^*\) for the associated momentum map. If $\om^c_M$ is the pullback  along $j_M$ of the symplectic form $c\om_1 + \om_2$ on $\PP(\C^2) \times \PP(\C\oplus V_1 \oplus V_2)$ then $\om^c_M$ is a symplectic form on $M$ with momentum map 
\[\mu^c_M = (c\mu_1+\mu_2) \circ j_M\]
and $(M,\mu^c_M)$ is a multiplicity free $\U(2)$-manifold with trivial principal isotropy group. 
\item \label{prop:mfd_triangles_delzant_item4} The momentum polytope of $(M, \mu^c_M)$ is the triangle $c(-\eps_2)+\conv(0, \delta_1, \delta_2)$ in case (\ref{triangles_delzant}) of \cref{prop:triangles}
\item \label{prop:mfd_delzant_item5} If $s \in \R, r,t \in \R_{>0}$, then 
\[\mu_M^{r,s,t} := t\mu^{r/t}_M + s(\eps_1 + \eps_2) = (r\mu_1+t\mu_2) \circ j_M + s(\varepsilon_1+\varepsilon_2)\] is a momentum map for the symplectic form $t\omega^{r/t}_M = r\om_1 + t\om_2$ on $M$. The momentum polytope of the multiplicity free $\U(2)$-manifold $(M,\mu_M^{s,t})$ is the triangle \[r(-\varepsilon_2)  + s(\varepsilon_1+\varepsilon_2) + t\cdot \conv(0,\delta_1,\delta_2)\] of case (\ref{triangles_delzant}) of \cref{prop:triangles}.
\end{enumerate}
\begin{figure}[h]
\begin{tikzpicture}
\draw[step=1, dotted, gray] (-1,-1) grid (3,2);

\draw[very thick] (1,0) -- (2,1) -- (1,-1) -- (1,0);
\draw[very thick, dashed] (-1,-1)--(2,2);
  \node at (1,0)[circle,fill,inner sep=2pt]{};
  \node at (2,1)[circle,fill,inner sep=2pt]{};
 \node at (1,-1)[circle,fill,inner sep=2pt]{};

\end{tikzpicture}
\caption[A triangle in part (\ref{prop:mfd_triangles_delzant_item4}) of \cref{prop:mfd_triangles_delzant}] {The triangle in part (\ref{prop:mfd_triangles_delzant_item4}) of \cref{prop:mfd_triangles_delzant} for $c=1$, 
$\begin{pmatrix}
a_1 & a_2 \\ b_1 & b_2
\end{pmatrix} 
= \begin{pmatrix}
1 & -1 \\ 0 & 1
\end{pmatrix}.$}
\label{fig:triangle_Delzant}
\end{figure}
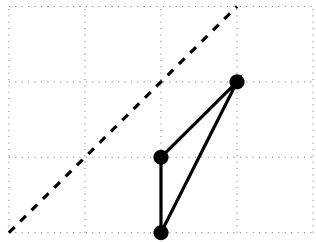
\end{proposition}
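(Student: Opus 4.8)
The plan is to follow the template of the proof of \cref{prop:mfd_triangles_halfright_1}, adapting it to the two representations $V_1,V_2$ and to the fact that here all three vertices of the triangle will lie in the interior of $\ft_+$. Part (\ref{prop:mfd_triangles_delzant_item_U2}) is an immediate application of \cref{prop:hfb_comparison} with $F = \PP(\C\oplus\C_{-\delta_1}\oplus\C_{-\delta_2})$. For part (\ref{prop:mfd_triangles_delzant_item1}) I would factor $j_M$ as a composition of two $\GL(2)$-equivariant closed embeddings. First, since each $\sv_i$ is a $B^-$-eigenvector of weight $-\delta_i$, the maps $\C_{-\delta_i}\to V_i,\ z\mapsto z\sv_i$ are $B^-$-equivariant; they induce a $B^-$-equivariant closed embedding $\PP(\C\oplus\C_{-\delta_1}\oplus\C_{-\delta_2})\into\PP(\C\oplus V_1\oplus V_2)$ and hence a $\GL(2)$-equivariant closed embedding $M\into\hfb{\GL(2)}{B^-}{\PP(\C\oplus V_1\oplus V_2)}$. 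Second, for the $\GL(2)$-module $W=\C\oplus V_1\oplus V_2$ one has the standard isomorphism $\hfb{\GL(2)}{B^-}{\PP(W)}\to\PP(\C^2)\times\PP(W),\ [g,[w]]\mapsto([ge_2],[gw])$ (cf.\ \cite[Lemma 2.3]{timashev-embbook}, using that $B^-$ is the $\GL(2)$-stabilizer of the line $[e_2]$). Composing these two maps gives precisely the stated formula for $j_M$.

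For part (\ref{prop:mfd_triangles_delzant_item2}), $j_M$ is a holomorphic embedding and $c\om_1+\om_2$ is a K\"ahler form on the product, so its restriction $\om^c_M$ to the complex submanifold $M$ is again K\"ahler, in particular symplectic and compatible with the complex structure that $M$ carries as a smooth $\GL(2)$-variety; the restriction $(c\mu_1+\mu_2)\circ j_M$ of the momentum map of the product is then a momentum map for $\om^c_M$. To see that the principal isotropy group is trivial I would compute, using the isomorphism of part (\ref{prop:mfd_triangles_delzant_item_U2}), that the $\U(2)$-stabilizer of $[e,[1:1:1]]$ equals $\{t\in T: \delta_1(t)=\delta_2(t)=1\}$, which is trivial because $(\delta_1,\delta_2)$ is a $\Z$-basis of the character lattice $\wl$ (this is where the hypothesis $\det\begin{pmatrix}a_1&a_2\\b_1&b_2\end{pmatrix}=1$ enters). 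Since $\dim_{\R}M = 2(1+2) = 6 = \dim\U(2)+\rk\U(2)$, multiplicity freeness then follows from \eqref{eq:dim_mf}.

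The heart of the argument is part (\ref{prop:mfd_triangles_delzant_item4}), which I would prove via \cref{thm:sjamaar_mp} exactly as in \cref{prop:mfd_triangles_halfright_1}(\ref{prop:mfd_triangles_halfright_1_item4}). The six $T$-fixpoints are the points lying over the two $T$-fixpoints $[e_2],[e_1]$ of $\GL(2)/B^-\cong\PP(\C^2)$ and the three coordinate lines of the fibre. A direct computation with \cref{ex:mupv} shows that the three fixpoints over $[e_2]$ have $\mu^c_M$-images $c(-\eps_2),\ c(-\eps_2)+\delta_1,\ c(-\eps_2)+\delta_2$, all lying in the interior of $\ft_+$, whereas the three over $[e_1]$ have images with $\<\alpha^\vee,\cdot\>$ equal to $-c$ or $-(c+a_i+b_i)$, hence negative, so they lie outside $\ft_+$. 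By \cref{thm:sjamaar_mp}(\ref{item_pol_vertex_interior}) the only candidate vertices are the first three. As in \loccit, the symplectic slice at each is identified, as a $T$-module, with the tangent space to the fibre $\PP(\C\oplus\C_{-\delta_1}\oplus\C_{-\delta_2})$ at the corresponding coordinate point, giving weight sets $\Pi_{x_1}=\{-\delta_1,-\delta_2\}$, $\Pi_{x_3}=\{\delta_1,\delta_1-\delta_2\}$ and $\Pi_{x_5}=\{\delta_2,\delta_2-\delta_1\}$. Each pair is linearly independent because $(\delta_1,\delta_2)$ is a basis, so by \cref{thm:sjamaar_mp}(\ref{item_pol_vertex_cone}) all three points are vertices and the cone spanned by $\mop(M)$ at $\mu^c_M(x_i)$ equals $\mu^c_M(x_i)-\cone\Pi_{x_i}$. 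Intersecting these three cones yields exactly $c(-\eps_2)+\conv(0,\delta_1,\delta_2)$, which is therefore $\mop(M)$.

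Finally, part (\ref{prop:mfd_delzant_item5}) follows from part (\ref{prop:mfd_triangles_delzant_item4}) and \cref{lem:stretch_and_shift}: applying the latter with scaling factor $t$ and shift $s$ to $(M,\om^{r/t}_M,\mu^{r/t}_M)$, whose momentum polytope is $\tfrac{r}{t}(-\eps_2)+\conv(0,\delta_1,\delta_2)$, produces the symplectic form $t\om^{r/t}_M=r\om_1+t\om_2$, the momentum map $\mu_M^{r,s,t}$ and the momentum polytope $r(-\eps_2)+s(\eps_1+\eps_2)+t\cdot\conv(0,\delta_1,\delta_2)$. I expect the only real obstacle to lie in part (\ref{prop:mfd_triangles_delzant_item4}): correctly identifying the symplectic slices as $T$-modules via the compatible complex structure and the fibre tangent spaces, and then verifying that the three resulting Sjamaar cones intersect in exactly the asserted triangle rather than in a larger region.
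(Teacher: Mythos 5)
Your proposal is correct and is essentially the proof the paper intends: the paper gives no separate argument for \cref{prop:mfd_triangles_delzant}, stating only that it is established ``with proofs similar to that of \cref{prop:mfd_triangles_halfright_1},'' and your writeup is precisely that adaptation (factoring $j_M$ through the bundle embedding and Timashev's isomorphism, checking trivial isotropy at $[e,[1:1:1]]$ via the basis property of $(\delta_1,\delta_2)$, and pinning down $\mop(M)$ by Sjamaar's theorem at the three $T$-fixpoints over $[e_2]$). All the weight and slice computations check out, and the cone-intersection step at the end is valid since $\mop(M)$ lies in the intersection of the three Sjamaar cones while containing the three vertices, so it equals the triangle.
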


\begin{remark} The following illustrates a phenomenon already observed in \cite[Remark 4.4]{woodward-spherical} for the acting group $\SU(2)$: in contrast to the toric case (see \cite[Remark 9.5]{lerman_tolman_ham_tor_orb_tor_var}), multiplicity free manifolds for a non-abelian acting group may admit invariant compatible complex structures that are not equivariantly isomorphic. As we will now make precise, the complex manifolds in \cref{prop:mfd_triangles_halfright_1,prop:mfd_triangles_halfright_2} also carry compatible Hamiltonian structures that realize triangles in case (\ref{triangles_delzant}) of \cref{prop:triangles}. For these triangles, the corresponding multiplicity free $\U(2)$-manifold therefore admits two compatible invariant complex structures that are not equivariantly isomorphic: the one in \cref{prop:mfd_triangles_halfright_1} or \cref{prop:mfd_triangles_halfright_2} and the one in \cref{prop:mfd_triangles_delzant}.

Indeed, let $j, M, \mu_1, \mu_2, \om_1, \om_2, \iota_Y$ and $\iota_M$ be as in \cref{prop:mfd_triangles_halfright_1}. For $r,t \in \R_{>0}$ and $ s\in \R$ we set
\begin{equation} \label{eq:muMrst}
\mu_M^{r,s,t}: = [(r+t)\mu_1 + t\mu_2] \circ \iota_Y \circ \iota_M + (s+t)(\varepsilon_1 + \varepsilon_2).\end{equation}
Then $\left(M, (r+t)\om_1 + t\om_2, \mu_M^{r,s,t} \right)$ is the multiplicity free $\U(2)$-manifold with trivial principal isotropy group whose momentum polytope is the triangle 
\begin{align*}
&r(-\varepsilon_2) + s(\varepsilon_1+\varepsilon_2) + t\cdot \conv(0, \alpha, j\alpha+\varepsilon_1) \\ = \quad
&r(-\varepsilon_2) + s(\varepsilon_1+\varepsilon_2) + t\cdot \conv(0, \varepsilon_1-\varepsilon_2, (j+1)\varepsilon_1-j\varepsilon_2)
\end{align*}  in case (\ref{triangles_delzant}) of \cref{prop:triangles} 
with $\begin{pmatrix}
a_1 & a_2 \\ b_1 & b_2
\end{pmatrix}  = \begin{pmatrix}
1 & j \\ 1 & j+1
\end{pmatrix}$. Observe that, compared to the momentum map $\mu_M^{s,t}$ of \cref{prop:mfd_triangles_halfright_1}(\ref{prop:mfd_triangles_halfright_1_item5}), the `new' term $r\mu_1$ in $\mu_M^{r,s,t}$ of \cref{eq:muMrst} just causes the `old' momentum polytope to be translated by $r(-\varepsilon_2)$:
\[\mop(M, \mu_M^{r,s,t}) =\mop(M,\mu_M^{s,t}) + r(-\varepsilon_2).\] That the invariant compatible complex structure in \cref{prop:mfd_triangles_halfright_1} on this Hamiltonian manifold $(M,\mu_M^{r,s,t})$  is not $\U(2)$-equivariantly isomorphic to the one in \cref{prop:mfd_triangles_delzant} can be seen as follows. If they were, then the complex $\GL(2)$-manifolds in the two propositions would be $\GL(2)$-equivariantly isomorphic (see, e.g., \cite[\S 1.4]{heinzner-GIT_on_stein}), but this is not the case: the unique open $\GL(2)$-orbit of the complex manifold $ \hfb{\GL(2)}{B^-}{\PP(\C^2 \oplus \C_{-j\alpha})}$ from \cref{prop:mfd_triangles_halfright_1} is 
\[\GL(2)\cdot[e,[e_1 \oplus 1]] \cong\GL(2)\left/\left\lbrace \begin{pmatrix} z^j & 0 \\ 0 & z^{j+1}\end{pmatrix}: z \in \C^{\times}\right\rbrace \right.,\]
whereas the unique open $\GL(2)$-orbit of $ \hfb{\GL(2)}{B^-}{\PP(\C \oplus \C_{-\alpha} \oplus\C_{-j\alpha-\varepsilon_1})}$ from \cref{prop:mfd_triangles_delzant} is
\[\GL(2) \cdot [e,[1\oplus1 \oplus 1]] \cong \GL(2)\left/\left\lbrace \begin{pmatrix} 1 & 0 \\ a & 1\end{pmatrix}: a \in \C\right\rbrace \right..\]

Similarly, if $j, M, \mu_1, \mu_2, \om_1, \om_2, \iota_Y$ and $\iota_M$ are now as in \cref{prop:mfd_triangles_halfright_2}; $r,t \in \R_{>0}$; $ s\in \R$ and we set
\[\mu_M^{r,s,t} := [(r+t)\mu_1 + t\mu_2] \circ \iota_Y \circ \iota_M+ s(\varepsilon_1 + \varepsilon_2),\] 
then $\left(M, (r+t)\om_1 + t\om_2, \mu_M^{r,s,t} \right)$ is  the multiplicity free $\U(2)$-manifold with trivial principal isotropy group whose momentum polytope is the triangle 
\begin{align*}
&r(-\varepsilon_2) + s(\varepsilon_1+\varepsilon_2) + t\cdot \conv(0, \alpha, j\alpha-\varepsilon_2) \\ = \quad
&r(-\varepsilon_2) + s(\varepsilon_1+\varepsilon_2) + t\cdot \conv(0, \varepsilon_1-\varepsilon_2, j\varepsilon_1-(j+1)\varepsilon_2)
\end{align*}  in case (\ref{triangles_delzant}) of \cref{prop:triangles} 
with $\begin{pmatrix}
a_1 & a_2 \\ b_1 & b_2
\end{pmatrix}  = \begin{pmatrix}
j+1 & 1 \\ j & 1
\end{pmatrix}.$  That the invariant compatible complex structures on $M$ from \cref{prop:mfd_triangles_halfright_2} and \cref{prop:mfd_triangles_delzant} are not equivariantly isomorphic can be shown exactly as in the previous case.
\end{remark}

Finally, we describe the multiplicity free $\U(2)$-manifold associated to the momentum polytope (\ref{triangles_right}) of \cref{prop:triangles}.

\begin{proposition} \label{prop:mfd_triangles_right}
Let 
\[
M = \SO(5)/[\SO(2)\times \SO(3)]
\]
be the Grassmannian of oriented $2$-planes in $\R^5$. We give $M$ the structure of a Hamiltonian $\SO(5)$-manifold by viewing it as the coadjoint orbit through the short roots of $\SO(5)$, with respect to the maximal torus $S=\left\{\left(\begin{smallmatrix}A & 0 & 0 \\ 0 & B & 0 \\ 0 & 0 & 1\end{smallmatrix} \right): A,B \in \SO(2)\right\}$. We define an embedding $\iota: \U(2) \into \SO(5)$ by embedding $\SO(4)$ into $\SO(5)$ as the upper left block and identifying $\U(2)$ with the centralizer of $\left\{\left(\begin{smallmatrix}A & 0 \\ 0 & A \end{smallmatrix} \right): A \in \SO(2)\right\}$ in $\SO(4)$ in such a way that the restriction of $\iota$ to $T$ is an isomorphism from $T$ onto $S$ that identifies the shorts roots of $\SO(5)$ with the four weights $-\varepsilon_1, -\varepsilon_2,\varepsilon_1, \varepsilon_2 \in \wl$ of $\U(2)$. 

Let $\mu_M: M \to \fu(2)^*$ be the momentum map and $\om_M$ be the symplectic form of the restricted Hamiltonian $\U(2)$-action on $M$ induced by the inclusion $\iota: \U(2)\into \SO(5)$. 
\begin{enumerate}[(a)]
\item \label{prop:mfd_triangles_right_itema} $(M, \om_M, \mu_M)$ is a multiplicity free $\U(2)$-manifold with trivial principal isotropy group whose momentum polytope is the  triangle
\(
\conv(0,\varepsilon_1,-\varepsilon_2),
\)
in case (\ref{triangles_right}) of \cref{prop:triangles}.
\item \label{prop:mfd_triangles_right_itemb} If $s \in \R$ and $t \in \Rp$, then $(M, t\om_M, t\mu_M+s(\varepsilon_1+\varepsilon_2))$ is a multiplicity free $\U(2)$-manifold with trivial principal isotropy group whose momentum polytope is the  triangle
\[s(\varepsilon_1+\varepsilon_2)+ t\cdot\conv(0,\varepsilon_1,-\varepsilon_2),
\]
of case (\ref{triangles_right}) of \cref{prop:triangles}.
\end{enumerate}
\end{proposition}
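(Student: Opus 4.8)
The plan is to deduce (\ref{prop:mfd_triangles_right_itemb}) from (\ref{prop:mfd_triangles_right_itema}) by a direct application of \cref{lem:stretch_and_shift}, so the work is in part (\ref{prop:mfd_triangles_right_itema}). Being a coadjoint orbit of $\SO(5)$, $M$ is compact, connected and Kähler, and the structures $\om_M,\mu_M$ are the restrictions to $\iota(\U(2))$ of the Kirillov--Kostant--Souriau form and of the inclusion momentum map; thus $\mu_M=\iota^*\circ\iota_{\Oc}$, where $\iota_{\Oc}\colon M\into\so(5)^*$ is the orbit inclusion and $\iota^*\colon\so(5)^*\to\fu(2)^*$ is dual to $\iota_*$. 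Since $\dim M=\dim\SO(5)-\dim(\SO(2)\times\SO(3))=10-4=6=\dim\U(2)+\rk\U(2)$, once I check that the principal isotropy group is discrete, \eqref{eq:dim_mf} shows $M$ is multiplicity free. To obtain triviality of the principal isotropy group I would exhibit a single point $m_0\in M$ with $\U(2)_{m_0}=\{e\}$: as the principal isotropy group is subconjugate to every stabilizer, this forces it to be trivial (and a fortiori discrete). Concretely, writing $\R^5=\C^2\oplus\R$ with $\U(2)$ acting in the standard way on $\C^2$ and trivially on $\R$, I would take $m_0$ to be a sufficiently generic oriented $2$-plane and verify $\U(2)_{m_0}=\{e\}$ by an elementary computation, exactly as in \cref{prop:mfd_triangles_side_wall}(\ref{prop:mfd_triangles_side_wall_part_a}).

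For the momentum polytope I would use the $T$-fixpoints and Sjamaar's \cref{thm:sjamaar_mp}. Because $\iota$ restricts to an isomorphism $T\xrightarrow{\sim}S$, the $T$-fixpoints of $M$ are its $S$-fixpoints, which on a coadjoint orbit are exactly $\Oc_\lambda\cap\so(5)^S=W\lambda$; for $\lambda$ a short root of the type-$\sB_2$ group $\SO(5)$ this Weyl orbit is the set of four short roots, identified by hypothesis with $-\eps_1,-\eps_2,\eps_1,\eps_2$. Hence $M$ has exactly four $T$-fixpoints, with $\mu_M$-images $\pm\eps_1,\pm\eps_2$, of which only $\eps_1$ and $-\eps_2$ lie in the interior of $\ft_+$ (we have $\<\alpha^\vee,\eps_1\>=\<\alpha^\vee,-\eps_2\>=1$, whereas $\eps_2$ and $-\eps_1$ lie in the opposite chamber). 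By \cref{thm:sjamaar_mp}(\ref{item_pol_vertex_interior}) the only candidate interior vertices of $\mop(M)$ are $\eps_1$ and $-\eps_2$.

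To compute the cones via \cref{thm:sjamaar_mp}(\ref{item_pol_vertex_cone}) I would fix the identification $\iota^*(f_i)=\eps_i$ of the $\sB_2$ root system. At the fixpoint mapping to $\eps_1$ the (holomorphic) tangent space of $M$ carries the roots $\beta$ of $\SO(5)$ with $(\beta,f_1)\neq0$, namely $\{f_1,f_1+f_2,f_1-f_2\}\mapsto\{\eps_1,\eps_1+\eps_2,\alpha\}$; the $\U(2)$-orbit direction accounts for the $\U(2)$-root $\alpha=\eps_1-\eps_2$, so the symplectic slice has weights $\Pi=\{\eps_1,\eps_1+\eps_2\}$, and symmetrically $\Pi=\{-\eps_2,-(\eps_1+\eps_2)\}$ at the fixpoint mapping to $-\eps_2$. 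As each weight set is linearly independent, \cref{thm:sjamaar_mp}(\ref{item_pol_vertex_cone}) confirms that $\eps_1$ and $-\eps_2$ are vertices and gives the cones $\eps_1-\cone(\eps_1,\eps_1+\eps_2)$ and $-\eps_2-\cone(-\eps_2,-(\eps_1+\eps_2))$. The first has edges toward $0$ (direction $-\eps_1$) and toward $-\eps_2$ (direction $-(\eps_1+\eps_2)$), the second toward $0$ and toward $\eps_1$; these close up to the triangle $\conv(0,\eps_1,-\eps_2)$, whose third vertex $0$ sits on the wall of $\ft_+$ and is of type \eqref{eq:alpha_cone_3} with $j=0$. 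Thus $\mop(M)=\conv(0,\eps_1,-\eps_2)$, which is case (\ref{triangles_right}) of \cref{prop:triangles}; as a cross-check, \cref{prop:convexhullweylorbit} yields $\mu_T(M)=\conv(\pm\eps_1,\pm\eps_2)$.

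I expect the principal difficulty to be bookkeeping rather than conceptual. Making $\iota$ and $\iota^*$ fully explicit, so that the four short roots and the three transverse $\sB_2$-directions at each interior fixpoint are pinned down with the correct signs dictated by the momentum-map convention of \cref{ex:mupv}, is the delicate point: it is what guarantees that the slice weights are $\{\eps_1,\eps_1+\eps_2\}$ rather than their negatives, hence that the tangent cones open into $\ft_+$ and reach the wall. I would resolve the sign ambiguity using the compatible complex structure coming from the identification of $M$ with the quadric $\SO(5,\C)/P$, together with the constraint from \cref{thm:sjamaar_mp} that the cones lie in $\ft_+$; this same identification delivers part (\ref{thm:triangles_realisation_item_3}) of \cref{thm:triangles_realisation} in this case. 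The only other step needing care is the explicit generic plane $m_0$ and the verification that its $\U(2)$-stabilizer is trivial, which is a finite elementary computation of the kind carried out in \cref{prop:mfd_triangles_side_wall,prop:mfd_triangles_halfright_1}.
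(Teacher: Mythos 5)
Your proposal is correct in substance, but it follows a genuinely different route from the paper's proof, so a comparison is in order. The paper does not compute the fixpoints and slice weights root-theoretically on the coadjoint orbit; instead it imports the $T$-momentum image of $M$ (the square $\conv(\pm\eps_1,\pm\eps_2)$) \emph{together with its x-ray} from \cite[Example 4.2]{cho_kim_hamiltonian_arxiv}, reads off the symplectic slice weights at the fixpoint $q$ over $\eps_1$ from the rays of that x-ray (using that the orbit $\U(2)\cdot q\cong S^2$ accounts for the ray toward $\eps_2$), and then runs the same Sjamaar endgame as you do, via \cref{thm:sjamaar_mp} and \cref{prop:convexhullweylorbit}. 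Crucially, the paper then gets the triviality of the principal isotropy group \emph{for free} from this slice computation: the two slice weights at $q$ are a long and a short root of $\SO(5)$, hence a basis of $\wl$, so the local model $\hfb{\U(2)}{T}{N_q}$ from \cref{rem_thm_sjam_mp}(\ref{rem_thm_sjam_mp_slice}) contains points with trivial stabilizer, and \cref{eq:dim_mf} finishes the argument. Your route replaces the citation of Cho--Kim by a self-contained $\sB_2$ root computation on the orbit (fixpoints $=W\cdot f_1=$ the four short roots, slice weights at $f_1$ equal to the positive roots pairing positively with $f_1$ minus the $\U(2)$-root $\alpha$), which is more conceptual and avoids the external reference; the sign issue you flag is real but is settled exactly as you say, by the standard fact that for a coadjoint orbit with momentum map the inclusion, the symplectic $T$-weights at a dominant vertex $\lambda$ are $\{\beta>0:\<\beta,\lambda\>>0\}$, consistent with the conventions of \cref{ex:mupv} and \cref{thm:sjamaar_mp}. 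The two steps you leave as sketches are both harmless: the explicit oriented $2$-plane with trivial stabilizer does exist and the verification is elementary (e.g.\ the plane spanned by $(e_1,0)$ and $\frac{1}{\sqrt 2}(e_2,1)$ in $\C^2\oplus\R$ works, once orientation-preservation is used to exclude $\diag(-1,1)$); but note that once you have the slice weights $\{\eps_1,\eps_1+\eps_2\}$ in hand, the paper's lattice-basis observation makes this extra computation unnecessary, which is the one efficiency your version could still borrow.
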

\begin{proof}
Since (\ref{prop:mfd_triangles_right_itemb}) follows from  
(\ref{prop:mfd_triangles_right_itema}) and \cref{lem:stretch_and_shift}, we only need to prove part (\ref{prop:mfd_triangles_right_itema}).
Let  $r: \fu(2)^* \to \ft^*$ be the restriction map.  Then $\mu_T = r \circ \mu_M$ is the momentum map of the restricted $T$-action on $M$. The momentum polytope $\mop_T(M)=\mu_T(M)$ of this restricted $T$-action was computed in 
\cite[Example 4.2]{cho_kim_hamiltonian_arxiv}
to be the following square:
\begin{equation} \label{eq:T_right_triangle}
\begin{tikzpicture}
\draw[step=2, dotted, gray] (-3,-3) grid (3,3);
\draw[very thick] (0,2) -- (2,0) -- (0,-2) -- (-2,0) -- (0,2) --(0,-2) -- (2,0) -- (-2,0);
\draw[very thick, dashed] (-3,-3)--(3,3);
  \node at (0,2)[circle,fill,inner sep=2pt]{};
  \node at (2,0)[circle,fill,inner sep=2pt]{};
  \node at (-2,0)[circle,fill,inner sep=2pt]{};
  \node at (0,-2)[circle,fill,inner sep=2pt]{};
  \node at (2.35,0){$\varepsilon_1$};
   \node at (-2.45,0){$-\varepsilon_1$};
   \node at (0,-2.30){$-\varepsilon_2$};
    \node at (0,2.30){$\varepsilon_2$};
\end{tikzpicture}
\end{equation}
In this picture, the lines (also the ones in the interior of the momentum image) are the images under $\mu_T$ of the points of $M$ with nontrivial $T$-isotropy, and the dots are the images of the four $T$-fixed points. Our goal is to show that the $\U(2)$-momentum polytope 
$\mop(M)$ of $M$ is 
\begin{center}
\begin{tikzpicture}
\draw[step=2, dotted, gray] (-3,-3) grid (3,3);
\draw[very thick] (0,0) -- (2,0) -- (0,-2) -- (0,0);
\draw[very thick, dashed] (-3,-3)--(3,3);
  \node at (2,0)[circle,fill,inner sep=2pt]{};
  \node at (0,-2)[circle,fill,inner sep=2pt]{};
  \node at (2.35,0){$\varepsilon_1$};
    \node at (0,-2.30){$-\varepsilon_2$};
\end{tikzpicture}
\end{center}
and that $M$ is a multiplicity free $\U(2)$-Hamiltonian manifold with trivial principal isotropy group. 

By  \cref{thm:sjamaar_mp}(\ref{item_pol_vertex_interior}), any vertex of $\mop(M)$ that lies in the interior of $\ft_+$ is the image under $\mu_M$ of a $T$-fixed point. Together with \cref{prop:convexhullweylorbit} it follows that $\varepsilon_1$ and $-\varepsilon_2$ are the only two vertices of $\mop(M)$ in $\ft_+$. In order to show that $\mop(M)$ is the asserted triangle, we now only need to prove that the two points where the boundary of the $T$-momentum image $\mop_T(M)$ intersects the Weyl wall do not lie in $\mop(M)$.  Let $q$ be the  $T$-fixed point on $M$ with $\mu_M(q) = \varepsilon_1$. Then the orbit $\U(2)\cdot q \cong \U(2)/T\cong S^2$ is, via the $T$-momentum map $\mu_T$, mapped onto the line segment between $\varepsilon_1$ and $\varepsilon_2$. This implies that the weights of the $T$-representation on the symplectic slice $N_q$ in $q$ are given by the directions of the other two rays  emerging from $\varepsilon_1$ in \eqref{eq:T_right_triangle}. Then \cref{thm:sjamaar_mp}(\ref{item_pol_vertex_cone}) implies that $\mop(M)$  has the desired form locally around $\varepsilon_1$. Together with similar considerations near $-\varepsilon_2$, this forces $\mop(M)$ to be globally as claimed.

Next we show the claim that $M$ contains points with trivial isotropy. A neighborhood of $q$ is $\U(2)$-equivariantly diffeomorphic to $\U(2) \times_T N_q$ (see \cref{rem_thm_sjam_mp}(\ref{rem_thm_sjam_mp_slice})). Note that  $T$ acts  on the symplectic slice $N_q$ with two weights which form a basis of the weight lattice $\wl$, because they are a long and a short root of $\SO(5)$. The claim  follows.  Since $M$ has dimension $6$, \cref{eq:dim_mf} now yields that $M$ is a multiplicity free $U(2)$-manifold.  
\end{proof}

\section{Diffeomorphism types} \label{sec:difftypes}

In this final section we discuss the nonequivariant diffeomorphism types of the manifolds in \cref{table_triangle_manifolds}. We start off with a brief review of some standard facts in the theory of
(real or complex) vector bundles $V\to E\stackrel{\pi}{\to} S^k$ with structure group $G \subset \GL(V)$ over spheres (for details, see \cite[Section 1.2]{Hatcher_VBKT_2_2}). Denote by $N$ and $S$
the north and south pole of the $k$-sphere $S^k$ ($k\geq 2$), respectively. Then both $U^-:=S^k\setminus \{N\}$ and $U^+=S^k\setminus \{S\}$ are homeomorphic to the open $k$-disk $U^k$ and therefore contractible,
so there are trivializations
\[
\phi^- \colon \pi^{-1}(U^-)\to U^- \times V,\quad \phi^+ \colon \pi^{-1}(U^+)\to U^+ \times V.
\]
Now, as $U^- \cap U^+ \cong S^{k-1}\times (-1,1)$, we obtain a map
\[
\phi^+ \circ \phi^- \colon S^{k-1}\times (-1,1)\times V\to S^{k-1}\times (-1,1) \times V,
\]
which is of the form $(x,t,v)\mapsto (x,t,\gamma(x,t)(v))$, for a map $\gamma \colon S^{k-1}\times (-1,1)\to G$.
In particular, the map
\[\gamma_E: S^{k-1} \to G, \gamma_E(x) = \gamma(x,0),\]
 defines an element $[\gamma_E]$ in the set $[S^{k-1},G]$ of free homotopy classes of maps from $S^{k-1}$ to $G$.

Conversely, given an element $[\gamma]\in [S^{k-1},G]$ represented by $\gamma\colon S^{k-1}\to G$, we can define a bundle
$V\to E_{\gamma}\stackrel{\pi}{\to} S^k$ by gluing two copies of $D^k\times V$, where $D^k$ is the closed $k$-disk, together via $\gamma$.
More precisely, writing $D^-\times V$ and $D^+\times V$ for the two copies of $D^k \times V$,  we define
\[
E_{\gamma}:= (D^-\times V) \cup_{\phi_{\gamma}} (D^+\times V),
\]
where $\phi_{\gamma}\colon \partial D^-\times V \to \partial D^+\times V$ is  given by $\phi_{\gamma}(x,v)=(x,\gamma(x)(v))$. This is called
the \textit{clutching construction} and $\gamma$ the \textit{clutching function}. It turns out that
the isomorphism class of this bundle only depends on the free homotopy class of $\gamma$ and that this construction
inverts the assignment $[E] \mapsto [\gamma_E]$ described above. In summary, we have the following
\begin{theorem}\label{bundle}
	The map from $[S^{k-1},G]$ to the set of isomorphism classes of vector bundles $V\to E\to S^k$ with
	structure group $G\subset \GL(V)$, which is given by mapping $[\gamma]\in [S^{k-1},G]$ to the isomorphism class of the bundle $E_{\gamma}$,
	is a bijection. Its inverse is given by the assignment $[E] \mapsto [\gamma_E]$. 
\end{theorem}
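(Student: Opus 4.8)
The plan is to prove the statement by verifying that the two assignments $[\gamma]\mapsto[E_\gamma]$ and $[E]\mapsto[\gamma_E]$ are each well defined on the relevant classes and are mutually inverse. Throughout I would fix the convention that every trivialization, transition map and homotopy respects the structure group, so that all clutching functions take values in $G$, and a homotopy between clutching functions is a homotopy through maps $S^{k-1}\to G$. With this understood, the three things to establish are: (i) the clutching construction descends to free homotopy classes; (ii) the class $[\gamma_E]$ does not depend on the chosen trivializations $\phi^{\pm}$; and (iii) the two assignments invert one another.

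First I would prove (i): if $\gamma_0\simeq\gamma_1$ via a homotopy $\Gamma\colon S^{k-1}\times[0,1]\to G$, then $E_{\gamma_0}\cong E_{\gamma_1}$. Applying the clutching construction fibrewise over the parameter interval yields a bundle $\widetilde{E}\to S^k\times[0,1]$ with structure group $G$ whose restriction to $S^k\times\{i\}$ is $E_{\gamma_i}$ for $i\in\{0,1\}$. The decisive input is the homotopy invariance of bundles over a paracompact base: the restrictions of a bundle over $X\times[0,1]$ to $X\times\{0\}$ and $X\times\{1\}$ are isomorphic (see \cite[Section 1.2]{Hatcher_VBKT_2_2}). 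Taking $X=S^k$ gives $E_{\gamma_0}\cong E_{\gamma_1}$, so $[\gamma]\mapsto[E_\gamma]$ is well defined.

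Next I would settle (ii) and (iii). For (ii), any second pair of $G$-trivializations of $E$ over the contractible sets $U^{\pm}$ differs from $\phi^{\pm}$ by maps $g^{\pm}\colon U^{\pm}\to G$, and a direct computation shows the new clutching function along the equator is $x\mapsto g^+(x)\,\gamma_E(x)\,g^-(x)^{-1}$. Since $U^{\pm}$ is contractible, the restrictions $g^{\pm}|_{S^{k-1}}$ are nullhomotopic, and so this map is freely homotopic to $\gamma_E$; hence $[\gamma_E]$ is independent of the chosen trivializations. For (iii), the composite $[\gamma]\mapsto[E_\gamma]\mapsto[\gamma_{E_\gamma}]$ is the identity essentially by construction, because the tautological trivializations of $E_\gamma$ on the two copies $D^{\pm}\times V$ recover $\gamma$ as transition function along the equator. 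For the reverse composite $[E]\mapsto[\gamma_E]\mapsto[E_{\gamma_E}]$, the given $\phi^{\pm}$ assemble into an explicit isomorphism $E\cong E_{\gamma_E}$: one identifies the part of $E$ over each closed hemisphere with $D^{\pm}\times V$ via $\phi^{\pm}$ and checks that the two identifications agree over $U^-\cap U^+$ exactly through $\gamma_E$. Combining (i)--(iii) yields the asserted bijection with inverse $[E]\mapsto[\gamma_E]$.

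The main obstacle is the homotopy invariance of bundles invoked in step (i); once that standard fact is available, the remaining steps are bookkeeping with trivializations. A secondary point requiring care is that $G$ is a subgroup of $\GL(V)$, so one must ensure that all the isomorphisms produced respect the $G$-structure; in particular the argument in (ii) that the nullhomotopic transition maps $g^{\pm}|_{S^{k-1}}$ leave $[\gamma_E]$ unchanged is cleanest precisely when $G$ is connected, which is the situation relevant to the bundles arising in the manifolds of \cref{table_triangle_manifolds}.
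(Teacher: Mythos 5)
Your proof is correct, and it is essentially the standard clutching argument of \cite[Section 1.2]{Hatcher_VBKT_2_2} (homotopy invariance of bundles over $S^k\times[0,1]$ for well-definedness, then bookkeeping with trivializations for mutual inversion), which is precisely the reference the paper cites in lieu of any proof of \cref{bundle}. Your closing caveat is also well taken: step (ii) genuinely needs the constants coming from the nullhomotopies of $g^{\pm}|_{S^{k-1}}$ to be connectable to the identity, so the statement for arbitrary $G\subset\GL(V)$ is only accurate for path-connected structure groups --- which covers all groups actually used in \cref{sec:difftypes}.
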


Recall that the set $\Vect^1(S^2)$ of isomorphism classes of complex line bundles over $S^2$ is an abelian group with respect to the tensor product operation. \Cref{bundle} gives us the bijection $\Vect^1(S^2) \to [S^1,\GL(1,\C)], [E] \mapsto [\gamma_E]$. Since  $S^1 = \U(1) \subset  \GL(1,\C)$ is a deformation retract of $\GL(1,\C) = \C^{\times}$ we can identify $ [S^1,\GL(1,\C)]$ with $[S^1,S^1] = \pi_1(S^1)$. 
Also, by definition, the tensor product of two line
	bundles $E_1$ and $E_2$ has the clutching function $\gamma_{E_1}\cdot \gamma_{E_2}$ (multiplying in $S^1=\U(1)$), which
	makes the assignment \[\Vect^1(S^2) \to \pi_1(S^1):[E]\mapsto [\gamma_E]\] a group homomorphism and, by \cref{bundle}, a group isomorphism.  

We now fix group isomorphisms $\phi_1: \Vect^1(S^2) \to \Z$ and $\phi_2: \pi_1(S^1) \to \Z$.  Such isomorphisms are unique up to sign, but it will turn out that the choice of sign will not be important in what follows. 
By abuse of notation, we will  write $\phi_1(E)$ for $\phi_1([E])$ and $\phi_2(\gamma)$ for $\phi_2([\gamma])$. Since $H^2(S^2,\Z) \cong \Z$, 
$\phi_1(E)$ can be understood as the Chern class of the complex line bundle $E$ up to sign, see e.g. \cite[Proposition 3.10]{Hatcher_VBKT_2_2}.
\begin{lemma}\label{Chern}
	Let $S^1$ act on $S^2$ by standard rotation and on two copies of $\C$ via weights
	$k_1 \in \Z$ and $k_2\in \Z$, respectively.
	Consider the corresponding $S^1$-equivariant line bundle $\C \to E \to S^2$ with weight $k_1$ on the fiber at the south pole $S$ and $k_2$ on the fiber at the north pole $N$.
	Then $\phi_1(E)=\pm(k_1-k_2)$, depending on the chosen $\phi_1$. 
\end{lemma}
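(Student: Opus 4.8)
The plan is to compute the winding number of the clutching function $\gamma_E$ of $E$ and then invoke \cref{bundle} together with the identification of $\Vect^1(S^2)$ with $\pi_1(S^1)\cong\Z$ established above. Since $\phi_1$ and the composite $[E]\mapsto\phi_2(\gamma_E)$ are both group isomorphisms $\Vect^1(S^2)\to\Z$, they agree up to an overall sign; hence $\phi_1(E)=\pm\phi_2(\gamma_E)$, and it suffices to show that $\gamma_E\colon S^1\to\C^{\times}$ has winding number $\pm(k_1-k_2)$.

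The key point is that the two hemisphere trivializations can be chosen $S^1$-equivariantly. Realize $S^2$ as $\C\cup\{\infty\}$ with $S$ at $z=0$ and $N$ at $z=\infty$, the rotation acting by $t\cdot z = tz$; write $w=1/z$ for the coordinate centered at $N$, so that $t\cdot w=t^{-1}w$. The closed hemisphere $U^-$ equivariantly deformation retracts to the fixed point $S$ via $(z,\tau)\mapsto\tau z$ with $\tau\in[0,1]$, so the restriction of $E$ to $U^-$ is equivariantly trivial, with the $S^1$-action on the fiber given by the weight $k_1$ of the representation at $S$. Thus there is an equivariant trivialization $\phi^-$ in which
\[
t\cdot(z,v)=(tz,\ t^{k_1}v),
\]
and likewise an equivariant trivialization $\phi^+$ over $U^+$ with
\[
t\cdot(w,v)=(t^{-1}w,\ t^{k_2}v).
\]

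On the overlap $U^-\cap U^+$ write the transition function as $v^+=g(z)v^-$, where $g\colon U^-\cap U^+\to\C^{\times}$ is the fiberwise form of $\phi^+\circ(\phi^-)^{-1}$, i.e.\ $\gamma_E$ restricted to the equator. Comparing the two expressions for the action of $t$ on a point of the overlap, using $t^{-1}w=1/(tz)$ so that the base coordinates match, yields
\[
g(tz)\,t^{k_1}=t^{k_2}g(z),\qquad\text{i.e.}\qquad g(tz)=t^{k_2-k_1}g(z)
\]
for all $t\in S^1$. The equator $\{\,|z|=1\,\}$, onto which $U^-\cap U^+$ retracts, is a single free $S^1$-orbit; parametrizing it by $t\mapsto t$ with base point $z=1$ gives $\gamma_E(t)=g(t)=t^{k_2-k_1}g(1)$, a loop in $\C^{\times}$ of winding number $k_2-k_1$. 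Therefore $\phi_2(\gamma_E)=\pm(k_2-k_1)$ and $\phi_1(E)=\pm(k_1-k_2)$.

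The main technical point is the existence of the equivariant hemisphere trivializations with fiber weights exactly $k_1$ and $k_2$; this is the equivariant analogue of the triviality of a bundle over a contractible base, and follows because each closed hemisphere equivariantly retracts to its pole while the fiber over that fixed point carries a well-defined $S^1$-weight. The various signs depend on the rotation convention and on the chosen isomorphisms $\phi_1,\phi_2$, but since the assertion is only up to sign these conventions are immaterial.
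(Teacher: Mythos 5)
Your proof is correct and follows essentially the same route as the paper's: both take $S^1$-equivariant trivializations of $E$ over the two hemispheres with fiber weights $k_1$ and $k_2$, and use equivariance of the resulting clutching/transition function to force it to have the form $t\mapsto c\,t^{\pm(k_1-k_2)}$, hence winding number $\pm(k_1-k_2)$. The only differences are cosmetic: you work in the coordinates $z$ and $w=1/z$ and justify the equivariant hemisphere trivializations by equivariant retraction to the poles, whereas the paper simply asserts such trivializations, writing both disks in the same rotation coordinate.
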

\begin{proof}
	We only have to determine $\phi_2(\gamma)$ up to sign, where $\gamma: S^1 \to S^1$ is the clutching function of the line bundle $E$.
	Trivializations of $E$ around $S$ and $N$ look like $D^2\times \C$ with $S^1$-actions
	\begin{align*} 
		s\cdot (z_1,z_2)=(sz_1, s^{k_1} z_2) \quad \text{ and } \quad s\cdot (z_1,z_2)=(sz_1, s^{k_2} z_2),
	\end{align*}
	respectively. The isomorphism between the boundaries of these two trivializations induced by the clutching function
	$\gamma$ has to preserve this $S^1$-action, which gives the condition (now $z_1\in \partial D^2=S^1$)
	\begin{align*}
		(s z_1, \gamma(s z_1) s^{k_2} z_2)=(s z_1, s^{k_1} \gamma(z_1) z_2).
	\end{align*}
	This immediately implies that $\phi_2(\gamma)$ is $\pm(k_1-k_2)$, the sign depending on the choice of $\phi_2$.
\end{proof}

\begin{theorem} \label{thm:difftypes}
	There are precisely four diffeomorphism types occurring in \cref{table_triangle_manifolds}:
	\begin{enumerate}[(a)]
		\item the manifolds in case (\ref{triangles_side_wall}) are diffeomorphic to $\PP(\C^4)$,
		\item the manifold in case (\ref{triangles_right}) has the diffeomorphism type of the Grassmannian of oriented $2$-planes in $\R^5$,
		\item those manifolds $\hfb{\U(2)}{T}{\PP(V)}$ in cases (\ref{triangles_delzant}), (\ref{triangles_halfright_1}) and (\ref{triangles_halfright_2}) for which the  first Chern class of the vector bundle $V\to \hfb{\U(2)}{T}{V}\to \U(2)/T$ is divisible by $3$
		are diffeomorphic to
		$S^2\times \PP(\C^3)$,
		\item 
 those manifolds $\hfb{\U(2)}{T}{\PP(V)}$ in cases (\ref{triangles_delzant}), (\ref{triangles_halfright_1}) and (\ref{triangles_halfright_2}) for which the  first Chern class of the vector bundle $V\to \hfb{\U(2)}{T}{V}\to \U(2)/T$ is not divisible by $3$		
		are diffeomorphic to
		the total space of any non-trivial $\PP(\C^3)$-bundle over $S^2$.
	\end{enumerate}
\end{theorem}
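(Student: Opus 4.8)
The plan is to sort the five rows of \cref{table_triangle_manifolds} into three groups, recognize the manifolds in cases (\ref{triangles_delzant}), (\ref{triangles_halfright_1}) and (\ref{triangles_halfright_2}) as projectivizations of rank-$3$ bundles over $S^2\cong\U(2)/T$, and then separate the resulting four candidate types by the pair of invariants $(b_2,\kappa)$, where $b_2$ is the second Betti number and $\kappa$ is the content of the cubic cup-product form on $H^2$. Two of the groups are immediate: in case (\ref{triangles_side_wall}) the manifold is $\PP(V)$ for a $4$-dimensional complex vector space $V$, hence diffeomorphic (indeed biholomorphic) to $\PP(\C^4)$; in case (\ref{triangles_right}) it is by construction the oriented Grassmannian $\SO(5)/[\SO(2)\times\SO(3)]$, and I would record that the plane spanned by an oriented orthonormal pair $(u,v)$ maps via $[u+\sqrt{-1}\,v]$ to a smooth complex quadric threefold $Q\subset\PP(\C^5)$, giving a diffeomorphism $M\cong Q$ that is convenient for computing cohomology.

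Next I would handle the three bundle families uniformly. For the relevant $3$-dimensional representation $V$, the manifold is $\hfb{\U(2)}{T}{\PP(V)}=\PP(E)$ with $E:=\hfb{\U(2)}{T}{V}$ a rank-$3$ complex vector bundle over $\U(2)/T\cong S^2$. By \cref{bundle} together with the identification $\pi_1(\GL(3,\C))\cong\pi_1(\U(3))\cong\Z$ detected by the first Chern class, such bundles are classified by $c_1(E)\in H^2(S^2;\Z)\cong\Z$. Since $\PP(E)\cong\PP(E\otimes L)$ for every line bundle $L$ and $c_1(E\otimes L)=c_1(E)+3\,c_1(L)$, the diffeomorphism type of $\PP(E)$ depends only on $c_1(E)\bmod 3$. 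When $3\mid c_1(E)$ a suitable twist makes $E$ trivial, so $\PP(E)\cong S^2\times\PP(\C^3)$; when $3\nmid c_1(E)$ the residues $\pm1$ give diffeomorphic total spaces, because fiberwise complex conjugation is a diffeomorphism $\PP(E)\to\PP(\overline{E})$ with $c_1(\overline{E})=-c_1(E)$, leaving a single non-trivial type. Using \cref{Chern} applied to $\det E$ at the two $T$-fixed points $eT,nT$ of $S^2$ I would check that both residues occur; for instance in case (\ref{triangles_halfright_1}) one finds $\det V=(1-j)\eps_1+(1+j)\eps_2$, so $c_1(E)=\pm 2j$, which is $\equiv 0$ for $j=0$ and $\not\equiv 0$ for $j=1$.

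Finally I would prove that $\PP(\C^4)$, $Q$, $S^2\times\PP(\C^3)$ and the non-trivial $\PP(\C^3)$-bundle are pairwise non-diffeomorphic via $(b_2,\kappa)$, where $\kappa$ is the content (gcd of coefficients in any $\Z$-basis) of the cubic form $u\mapsto\langle u^3,[M]\rangle$ on $H^2(M;\Z)$ valued in $H^6(M;\Z)\cong\Z$; this $\kappa$ is well-defined because content is invariant under $\GL(2,\Z)$ acting on $H^2$ and under reversal of the generator of $H^6$. One computes $(b_2,\kappa)=(1,1)$ for $\PP(\C^4)$ (since $\langle x^3,[M]\rangle=1$) and $(1,2)$ for $Q$ (a quadric threefold has degree $2$, so $\langle h^3,[M]\rangle=2$). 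For the two rank-$2$ cases the Leray--Hirsch theorem gives $H^*(\PP(E))\cong\Z[t,\xi]/(t^2,\ \xi^3+c_1(E)\,t\xi^2)$, because $c_2(E),c_3(E)\in H^{\ge 4}(S^2)=0$; hence $\langle(pt+q\xi)^3,[M]\rangle=q^2\bigl(3p-c_1(E)q\bigr)$ has content $\gcd(3,c_1(E))$, equal to $3$ exactly when $3\mid c_1(E)$ and to $1$ otherwise, yielding $(2,3)$ for $S^2\times\PP(\C^3)$ and $(2,1)$ for the non-trivial bundle. As the four pairs $(1,1),(1,2),(2,3),(2,1)$ are distinct and both bundle types are realized, this establishes precisely four diffeomorphism types.

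The hard part will be separating $S^2\times\PP(\C^3)$ from the non-trivial $\PP(\C^3)$-bundle: they share all Betti numbers, so a ring-level invariant is unavoidable, and I must verify both that $\kappa$ is a genuine diffeomorphism invariant (via the $\GL(2,\Z)$- and sign-invariance of content) and that the $\bmod 3$ reduction in the projective-bundle relation is exactly what makes $\kappa$ jump between $3$ and $1$. A secondary point to get right is the bookkeeping that tensoring by a line bundle simultaneously leaves $\PP(E)$ unchanged and shifts $c_1$ by multiples of $3$, so that only $c_1\bmod 3$ survives and the non-trivial case collapses to a single type.
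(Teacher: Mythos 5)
Your proposal is correct, and while it shares the paper's skeleton --- cases (\ref{triangles_side_wall}) and (\ref{triangles_right}) identified directly, cases (\ref{triangles_delzant}), (\ref{triangles_halfright_1}), (\ref{triangles_halfright_2}) recognized as $\PP(E)$ for rank-$3$ bundles $E$ over $S^2$, reduction of the type to $c_1(E)\bmod 3$, then separation of the four candidates --- both key mechanisms are genuinely different from the paper's. For the reduction mod $3$, the paper descends the clutching function to $\PGL(3,\C)$ and uses the exact sequence $0\to\pi_1(Z(\GL(3,\C)))\to\pi_1(\GL(3,\C))\to\pi_1(\PGL(3,\C))\to 0$ to obtain a $\Z/3\Z$-valued invariant, whereas you use the more elementary relation $\PP(E)\cong\PP(E\otimes L)$, $c_1(E\otimes L)=c_1(E)+3c_1(L)$, combined with \cref{bundle}; both arguments merge the residues $\pm1$ via fiberwise complex conjugation. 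For the separation, the paper invokes three separate invariants: spin versus non-spin (for $\PP(\C^4)$ versus the Grassmannian), the equivariant Euler characteristic $\chi(M)=\chi(M^T)$ (giving $\chi=4$ for those two versus $\chi=6$ for the bundle cases), and a direct check that the graded rings $\Z[x,y]/(x^2,y^3)$ and $\Z[x,y]/(x^2,y^3+xy^2)$ are non-isomorphic. You replace all of this by the single invariant $(b_2,\kappa)$, with $\kappa$ the content of the cubic cup form on $H^2$; this is sound (content is preserved under $\GL(b_2,\Z)$ base change and under flipping the generator of $H^6$), your $\kappa=\gcd(3,c_1(E))$ computation is exactly what the paper's generator chase detects, and it yields the slightly stronger conclusion that the four types are pairwise non-homotopy-equivalent. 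The trade-offs: your uniform approach requires the diffeomorphism of the Grassmannian with the quadric threefold $Q\subset\PP(\C^5)$ to compute $\langle h^3,[Q]\rangle=2$, a geometric input the paper avoids, while the paper's spin and fixed-point arguments lean on equivariant geometry instead of ring computations. A small point in your favor: you verify via \cref{Chern} that both residues of $c_1\bmod 3$ actually occur in \cref{table_triangle_manifolds} (e.g.\ $c_1(E)=\pm 2j$ in case (\ref{triangles_halfright_1})), so that there are \emph{precisely}, not at most, four types; the paper defers this realizability check to the remark following the theorem.
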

\begin{proof}
	As $\PP(\C^4)$ is spin and the aforementioned Grassmannian is not, these two manifolds are not diffeomorphic. In addition, both of them are not diffeomorphic
	to the manifolds occurring in cases (\ref{triangles_delzant}), (\ref{triangles_halfright_1}) and (\ref{triangles_halfright_2}) of \cref{table_triangle_manifolds} due to the equality of Euler characteristics
	$\chi(M)=\chi(M^T)$ which holds for any torus action on a compact manifold $M$, see \cite{kobayashi-fixed_points_isometries}. The real task here is to distinguish between the manifolds in
	cases (\ref{triangles_delzant}), (\ref{triangles_halfright_1}) and (\ref{triangles_halfright_2}).
	
		Let $M = \hfb{\U(2)}{T}{\PP(V)}$ be one of these manifolds. As the projective bundle of the vector bundle $E=\hfb{\U(2)}{T}{V}$ of  rank $3$ over $\U(2)/T \cong S^2$, it can be  described by a clutching function $\gamma \colon S^1 \to \text{PGL}(3,\C)$, which comes from the clutching 
	function $\tilde{\gamma}$ of $E$. Because $E$ is the sum $L_1 \oplus L_2 \oplus L_3$ of three line bundles, we have $\tilde{\gamma}=(\gamma_1,\gamma_2,\gamma_3)\colon S^1\to \U(1)^3 \subset \U(3)$, where $\gamma_1, \gamma_2$ and $\gamma_3$ are the clutching functions of $L_1, L_2$ and $L_3$.
	The class of $\tilde{\gamma}$ in $\pi_1(\U(3))=\pi_1(\text{GL}(3,\C))=\Z$ is now given by
	$$\phi_2(\det(\tilde{\gamma}))=\phi_2(\gamma_1)+\phi_2(\gamma_2)+\phi_2(\gamma_3).$$
	It follows that the class of $\gamma$ in $[S_1,\PGL(3,\C)]=\pi_1(\text{PGL}(3,\C))=\Z/3\Z$ is determined by the value
	\[
	f(\gamma):=[\phi_2(\gamma_1\cdot \gamma_2 \cdot \gamma_3)]=[\phi_2(\gamma_1)+\phi_2(\gamma_2)+\phi_2(\gamma_3)] \in \Z/ 3\Z,
	\]
	since the fibration $Z(\text{GL}(3,\C))\to \text{GL}(3,\C)\to \text{PGL}(3,\C)$ induces a short exact sequence
	\[
	0\to \pi_1(Z(\text{GL}(3,\C)))\to \pi_1(\text{GL}(3,\C))\to \pi_1(\text{PGL}(3,\C))\to 0.
	\]
	Note that $f(\gamma)$ is equal up to sign to $\phi_1(L_1)+\phi_1(L_2)+\phi_1(L_3)$ modulo $3$, where the $L_i$ are the line bundles from above
	and $\phi_1$ is the fixed isomorphism $\Vect^1(S^2) \to \Z$.
	
	We only need to check that the total spaces $E_1$ and $E_{-1}$ of the $\PP(\C^3)$-bundles with $f(\gamma_+)=1$ and $f(\gamma_-)=-1$,
	where $\gamma_+: S^1 \to \PGL(3,\C)$ and $\gamma_-: S^1 \to \PGL(3,\C)$ are the clutching functions of $E_1$ and $E_{-1}$, are diffeomorphic, and that
	$E_1$ and $S^2\times \PP(\C^3)$ are not (note that all these statements do not depend on the isomorphisms $\phi_1$ and $\phi_2$ we have chosen).

	Because the vector bundles of $E_{\pm 1}$ are sums of three line bundles, the first statement follows immediately from the fact that two complex line bundles over $S^2$, whose first Chern classes differ only in their sign, are $\C$-antilinearly isomorphic (as a change in sign of the first Chern class corresponds to a change in sign of the
	complex structure on the fiber). The second statement is true as $E_1$ and $S^2\times \PP(\C^3)$ are not even homotopy
	equivalent. Indeed, by e.g. \cite[\S 17.2]{husemoller-fb_gtm_3ed}, the cohomology ring of $E_1$ is
	$\Z[x,y]/(x^2, y^3+xy^2)$, where $x$ represents a generator of $H^*(S^2)$ and $y$ represents a generator of $H^*(\PP(\C^3))$, whereas $H^*(S^2\times \PP(\C^3)) = \Z[x,y]/(x^2,y^3)$. These two cohomology rings are not isomorphic since
	any graded ring isomorphism
	\begin{align*}
		\Z[x,y]/(x^2, y^3) \to \Z[x,y]/(x^2, y^3+xy^2)
	\end{align*}
	would have to send $x$ to $\pm x$ and therefore $y$ to $ax\pm y$ for some $a \in \Z$,  but $y^3=0$ on the left, whereas
	$(ax\pm y)^3=3axy^2\pm y^3 \neq 0$ on the right.
\end{proof}
\begin{remark} 
	In order to determine the first Chern class modulo $3$ of the $\C^3$-bundle $E$ giving the $\PP(\C^3)$-bundle $M$ of case (\ref{triangles_delzant}), (\ref{triangles_halfright_1}) or (\ref{triangles_halfright_2}) in \cref{table_triangle_manifolds}, it is sufficient to look
	at the directions $\lambda_1=a_1 \eps_1+b_1 \eps_2$ and $\lambda_2=a_2 \eps_1+b_2 \eps_2$ in which the edges of
	the  momentum polytope $\mop(M)$ emerge at some vertex $\sv$. 
	Indeed, a neighborhood of the $\U(2)$-orbit  $\Psi^{-1}(\sv)$ in $M$ looks like the bundle
	$L'=\U(2)\times_{T} (\C_{-\lambda_1} \oplus \C_{-\lambda_2})$.
	Now consider the action of $\U(1)\times \{e\}\subset T \subset \U(2)$ on $L'$ and note that the weights of that circle action on the fiber over
	$eT \in \U(2)/T$ are given by $-a_1$ and $-a_2$, whereas the weights in the fiber over the other $T$-fixed point in $\U(2)/T$ are $-b_1$ and $-b_2$.
	Using \cref{Chern}, we see that the first Chern class of $L'$ is (up to sign) equal to  $-a_1-a_2+b_1+b_2$.
	Now observe that $\PP(L'\oplus \underline{\C})=\PP(E) = M$, which implies that $M$ is diffeomorphic
	to $S^2\times \PP(\C^3)$ if and only if $a_1+a_2-b_1-b_2$ is a multiple of $3$.
\end{remark}

\def\cprime{$'$} \def\cprime{$'$} \def\cprime{$'$} \def\cprime{$'$}
  \def\cprime{$'$}
\providecommand{\bysame}{\leavevmode\hbox to3em{\hrulefill}\thinspace}
\providecommand{\MR}{\relax\ifhmode\unskip\space\fi MR }
\providecommand{\MRhref}[2]{%
  \href{http://www.ams.org/mathscinet-getitem?mr=#1}{#2}
}
\providecommand{\href}[2]{#2}

\end{document}